\newcommand{\Gint}{A} 
\newcommand{\CP}{{\rm{CP}}}
\newcommand{\Bbb}{\mathbb}
\newcommand{\dist}{{\rm{dist}}}
\newcommand{\reals}{\Bbb R}
\newcommand{\complex}{\Bbb C}
\newcommand{\disk}{\Bbb D}
\newcommand{\naturals}{\Bbb N}
\theoremstyle{plain}                    
\newtheorem{thm}{Theorem}[section] \newtheorem{cor}[thm]{Corollary}
\newtheorem{lemma}[thm]{Lemma} \newtheorem{prop}[thm]{Proposition}
\newcounter{ques} \newtheorem{ques}[thm]{Question}
\numberwithin{equation}{section}
\begin{document}
	
\baselineskip=18pt

\title[Approximation by polynomials with only real critical points] 
  {Approximation by polynomials with only real critical points} 

\subjclass{Primary: 41A10,   Secondary: 30E10}
\keywords{Weierstrass approximation, critical points, Chebyshev 
polynomials, uniform approximation, weak star convergence} 
\author {David L. Bishop}
\address{D.L. Bishop\\
         Economics Department\\
         Yale University \\
         New Haven CT 06520-8268}
\email {david.bishop@yale.edu}

\begin{abstract} 
	We strengthen the Weierstrass approximation theorem by proving 
	that any  real-valued continuous function on an interval 
	$I \subset \reals$ can be uniformly approximated by a 
	real-valued polynomial whose only (possibly complex) critical 
	points are contained in $I$.
	The proof uses a perturbed version of the Chebyshev polynomials 
	and an application of the Brouwer fixed point theorem. 
\end{abstract}

\maketitle

\section{Introduction}

The Weierstrass approximation theorem \cite{Weierstrass1885} 
states that for any real-valued,
continuous function  $f$ on a compact interval $I \subset \reals$, and 
for any $\epsilon>0$, there is a real polynomial $p$ so that 
$\lVert f-p\rVert_I = \sup_{x \in I} |f(x)-p(x)| < \epsilon$. The statement does 
not say much about the behavior of  $p$ off the interval $I$, but 
for some applications of Weierstrass's theorem,   it would be 
advantageous to know the location of all the critical points of $p$, 
e.g., in polynomial dynamics the behavior of the iterates of $p$ depends 
crucially on the orbits of all the complex critical points of $p$.
In \cite{BL-Runge}  it is shown that 
one can restrict all the critical points (real and complex)  
to a thin rectangle  $I \times [-\epsilon, \epsilon]$.
In this paper, we prove that we can actually take  $\epsilon=0$.
The following is our main result. 

\begin{thm}[Critically Constrained Weierstrass Theorem]  \label{weier++}
Suppose $f:I \to \reals$ is a continuous function on a compact interval
$I \subset \reals$. Then  for any $\epsilon>0$, 
there is a real polynomial $p$ so that 
$\lVert f-p\rVert_I  < \epsilon$ and $ \CP(p) := \{z \in \complex : p'(z) =0\} 
\subset I$, i.e., every real or complex critical point of $p$ is inside $I$. 
If $f$ is $A$-Lipschitz, then 
$p$ may be taken to be $CA$-Lipschitz for some $C< \infty$ 
independent of $f$.
\end{thm}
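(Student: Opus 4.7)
The plan is to normalize $I = [-1,1]$ and, for large $N$, construct $p$ of degree $N$ by directly prescribing its critical points as small perturbations of the Chebyshev extrema $c^0_j = \cos((N-j)\pi/N) \in (-1,1)$. Every polynomial satisfying the critical-point constraint has the form
$$ p_c(x) = \alpha \int_{-1}^x \prod_{j=1}^{N-1}(t-c_j)\,dt + \beta, $$
parametrized by the critical-point vector $c = (c_1,\dots,c_{N-1}) \in I^{N-1}$ together with a leading-coefficient scale $\alpha$ and an additive constant $\beta$. Two additional normalizations --- for instance matching $f$ at two fixed reference points, with minor tweaks to avoid the rare degeneracies where these linear conditions become singular --- determine $\alpha$ and $\beta$ as functions of $c$, leaving the $N-1$ equations $p_c(c_j) = f(c_j)$ to impose. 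If these all hold, then $p_c$ interpolates $f$ at $N+1$ points and is monotone between consecutive extrema, so $\|p_c - f\|_I \leq 2\,\omega_f(1/N)$, which falls below $\epsilon$ for $N$ large enough.

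To produce such a $c$, I would invoke the Brouwer fixed point theorem. Define a continuous self-map $\Phi$ on a small cube $\Omega \subset I^{N-1}$ centered at $c^0$ by the rule
$$ \Phi(c)_j = c_j + \gamma\bigl(f(c_j) - p_c(c_j)\bigr) $$
for a suitably chosen small gain $\gamma > 0$; fixed points of $\Phi$ are precisely the solutions $p_c(c_j) = f(c_j)$. The anchor at $c^0$ is the natural choice because when $f$ is replaced by the alternating-sign reference pattern of $T_N$, the construction recovers $p_{c^0} = T_N$ exactly, so $c^0$ is a fixed point of $\Phi$ for this Chebyshev model; Brouwer then perturbs the model to accommodate a general $f$.

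The main obstacle will be verifying $\Phi(\Omega) \subseteq \Omega$, which reduces to a quantitative Jacobian estimate for the map $c \mapsto (p_c(c_j))_j$ near $c^0$, showing it is well-conditioned (roughly diagonally dominant) to first order in the perturbation $c - c^0$; this is where Chebyshev-specific estimates on the rate of change of an extremum value under displacement of its critical point should do the work. A second delicate point is the Lipschitz claim $\|p'\|_I \leq CA$: a generic polynomial of degree $N$ with $\|p\|_I \leq A$ can have $\|p'\|_I$ as large as $N^2 A$ by Markov, so the bound must exploit structure specific to this construction. For $p_c$ built as above, the total variation equals $\sum_j |f(c_{j+1}) - f(c_j)| \leq 2A$, giving $L^1$ control on $p'$; upgrading this to a pointwise $L^\infty$ bound independent of $N$ --- essentially a Markov-type inequality tailored to polynomials whose derivative is real-rooted with roots distributed like the Chebyshev extrema --- is the step I expect to require the most care.
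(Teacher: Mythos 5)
Your proposal has a structural flaw that is independent of the two difficulties you flag at the end, and it is worth understanding before those are addressed. You are trying to solve the interpolation conditions $p_c(c_j) = f(c_j)$ where the $c_j$ are the critical points of $p_c$. But the critical values of a polynomial of degree $N$ with $N-1$ distinct real critical points $c_1 < \dots < c_{N-1}$ \emph{must} strictly alternate: $p_c$ is monotone on each $(c_j,c_{j+1})$ with the direction of monotonicity flipping at every $c_j$, so the increments $p_c(c_{j+1}) - p_c(c_j)$ alternate in sign. Your conditions therefore force the increments $f(c_{j+1}) - f(c_j)$ to alternate in sign as well, which is false for any monotone $f$ and, in the extreme case of constant $f$, would force $p_c$ to be constant, contradicting $\deg p_c = N$. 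So for most Lipschitz $f$ the system you want Brouwer to solve has \emph{no} solution with $c$ in a small cube around $c^0$ (or anywhere with the $c_j$ distinct), and hence the self-mapping property $\Phi(\Omega)\subseteq\Omega$ cannot hold. The anchor $c^0$ with $p_{c^0}=T_N$ is exactly the point: that model works because $T_N$'s critical values alternate between $\pm 1$, a pattern a generic $f$ simply does not match. Allowing $c_j$'s to coalesce (degenerate critical points) can circumvent the alternation obstruction, but then the fixed point is far outside any small neighborhood of $c^0$, and the transversality/diagonal-dominance you were counting on collapses.

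The paper avoids this entirely by never matching $f$ at the critical points of $p$. Instead it takes $p'$ to be a perturbed Chebyshev polynomial $T_n(x,y)$ (so the critical points of $p$ are small displacements of the Chebyshev \emph{roots} $r^n_k$, not the extrema), and uses Brouwer to match the \emph{averages} $\frac{1}{|G^n_k|}\int_{G^n_k} p'\,dx$ to the increments $(f(b)-f(a))/(b-a)$ over a fixed partition $\{G^n_k\}$ of $[-1,1]$ into groups of four nodal intervals. The crucial point is that $\int_{G^n_k} T_n \approx 0$ (adjacent positive and negative nodes nearly cancel), so the target averages can be any small numbers of either sign --- no alternation is required. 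The resulting $p=\int p'$ then agrees with $f$ at the \emph{endpoints} of the $G^n_k$ (which are not critical points of $p$), and $\|p-f\|_\infty = O(1/n)$ because both are Lipschitz and $|G^n_k| = O(1/n)$. The Lipschitz bound you worry about is automatic in this formulation: $\|p'\|_\infty = \|T_n(\cdot,y)\|_\infty = O(1)$ by a perturbation estimate (Lemma~\ref{sup bound}), with no need for a Markov-type inequality. The decay estimates controlling cross-interval interference in Brouwer's argument (your ``diagonal dominance'') are also cleaner for roots than for extrema, which is why the paper uses a carefully calibrated three-root perturbation with $O(d^{-3})$ distortion decay. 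In short, you want to integrate your object one more time than you currently are: perturb the derivative, not the polynomial itself.
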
 

Using dilation and translation, it is enough to prove Theorem
\ref{weier++}  for the
particular interval $I=[-1,1]$, and this 
is the only case we will consider from this point on.
Recall that $f$ is $A$-Lipschitz on $I$  if $|f(x)-f(y)|\leq A|x-y|$
for all $x,y \in I$.
For a Lipschitz function $f$, the derivative $f'$ exists 
and  satisfies $|f'| \leq A$ almost everywhere, and $ f(x) = f(a) 
+\int_a^x f'(t) dt$ (e.g., see Section 3.5 of \cite{MR1681462}).
By the usual Weierstrass theorem, polynomials are dense 
in  $C_\reals(I)$ (the space of  continuous, real-valued
functions on $I$), and every polynomial 
is Lipschitz when restricted to a compact interval, so 
it suffices to prove Theorem \ref{weier++} when 
$f$ is Lipschitz. 

The assumption  that $f$ is real valued is necessary; a 
similar result does not hold for complex valued functions.
Eremenko and Gabrielov \cite{MR1888795} proved that any 
complex  valued polynomial  with only real critical points 
is essentially real-valued itself. More precisely, they 
proved that any such polynomial $p$
is of the form $p(z)=aq(z)+b$,
where $q(z)$ is a real polynomial and $a,b\in \complex$.
(Their result characterizes rational functions with only 
real critical points, but specializes to polynomials as above.)
Polynomials with only real critical values  have played a role in 
various problems, e.g., 
density of hyperbolicity  in dynamics \cite{MR2335796}, 
rigidity of conjugate polynomials \cite{MR1940166}, 
Smale's conjecture on solving polynomial systems \cite{MR2677885}, 
and Sendov's conjecture on the locations of the critical points of a polynomial
in terms of its roots \cite{MR2297082}.

We also note that Theorem \ref{weier++} is non-linear in nature. 
For example, $x^3$ and $(x-1)^3$ each have a single
critical point,  and these are  both  real. 
However, it is easy to check that the sum $x^3 + (x-1)^3 $ has two complex
critical points.   So the set of real polynomials with all
critical points in $I=[0,1]$ is not  a linear subspace of 
$C_\reals(I)$.
Thus many usual methods, such as duality or reducing to 
approximating a spanning set, do not apply. 
Moreover,  Theorem \ref{weier++} need not be true for 
general compact subsets of $\reals$. See Section 
\ref{fails sec} for some disconnected  sets where it fails.

If $p_n \to f$ uniformly, we might hope that $p_n' \to f'$, 
at least when $f$ is analytic, but this is false, except in very 
special cases.
To see why, we first 
recall that the Laguerre-P{\'o}lya class is the collection of entire 
functions (holomorphic functions on $\complex$)
that are limits, uniformly on compact sets,
of real polynomials with only real zeros. These have 
been characterized as follows \cite{Polya1913}: it is the collection of entire
functions $f$  so that (1) all roots are real, (2) the
nonzero  roots satisfy $\sum_n |z_n|^{-2} < \infty$ and  (3) we
have a Hadamard factorization
\begin{align} \label{Hadamard}
	f(z) = z^m  e^{a+bz+c z^2} \prod_n (1 - \frac z{z_n}) e^{z/z_n},
\end{align}
with $m \in \{0,1,2,\dots\}$, $a,b \in \reals$ and $c \leq 0$.
In particular, functions like $\exp(-z^2)$ and $\sin(z)$ are
in  the Laguerre-P{\'o}lya class,
but $\exp(z^2)$ and $\sinh(z)$ are  not.
This class arises in many contexts, e.g., 
the Riemann hypothesis is equivalent to the  claim that a certain
explicit formula defines a function in the Laguerre-P{\'o}lya class 
\cite{MR1827149}, \cite{MR882550}.

A theorem of  Korevaar and Loewner \cite{MR161984}, extending 
earlier work of P{\'o}lya and Laguerre, says that 
if $\{p_n\}$ are polynomials with only real zeros that 
converge uniformly to $f$ on  an interval $I \subset\reals$,
then $f$ must be the restriction to $I$ 
of  a Laguerre-P{\'o}lya entire function, and that $p_n$ converges 
to $f$ on the whole  complex plane  (uniformly on compact sets).
Clunie and Kuijlaars later proved that 
this also holds if we only assume $p_n$ converges 
in measure to $f$ on  a subset $E\subset 
\reals $ of positive measure (see Corollary 1.3, \cite{MR1294323}). 
Recall that $p_n \to f$ in measure
if for every $\epsilon >0$, $|\{x: |f(x)-p_n(x)| > \epsilon \}|$
tends to zero 
(in this paper, $|E|$ will denote the Lebesgue measure of a
measurable subset of $\reals$.)  For bounded intervals $I \subset \reals$, 
pointwise convergence almost everywhere implies convergence in measure, 
so the same conclusion  holds if $p_n \to f$ pointwise on a set $E$ of positive measure. 
Thus there exist analytic functions $f$ on $I$ so that $f'$ 
cannot  be a limit of polynomials with only real zeros, either uniformly,
in measure, or pointwise on a set of positive measure. However, our 
proof of Theorem \ref{weier++} will also give the following result.

\begin{thm} \label{weak limit} 
There is a $C < \infty$ so that 
every bounded, measurable function $f$  on $I$ is the weak-$\ast$
limit in $L^\infty$  
of a sequence of polynomials $\{p_n\}$ with only real zeros, 
and such that   $\sup_n \lVert p_n\rVert_I \leq C \lVert f\rVert_\infty$.
This fails for $C=1$.
\end{thm}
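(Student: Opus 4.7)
The plan is to reduce Theorem~\ref{weak limit} to Theorem~\ref{weier++} by integrating once. Given a bounded measurable $f$ on $I=[-1,1]$, set $F(x)=\int_{-1}^x f(t)\,dt$, which is $\lVert f\rVert_\infty$-Lipschitz on $I$ and satisfies $F(-1)=0$, $F'=f$ almost everywhere. Applying Theorem~\ref{weier++} to $F$ produces real polynomials $P_n$ with $\lVert F-P_n\rVert_I<1/n$, $\CP(P_n)\subset I\subset\reals$, and $P_n$ being $C\lVert f\rVert_\infty$-Lipschitz for the universal constant $C$ from Theorem~\ref{weier++}. Set $p_n=P_n'$. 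Every zero of $p_n$ is a critical point of $P_n$ and hence real, so $p_n$ has only real zeros, and the Lipschitz bound on $P_n$ gives $\lVert p_n\rVert_I\leq C\lVert f\rVert_\infty$.

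To verify $p_n\to f$ weak-$\ast$ in $L^\infty$, I would first test against $g\in C^1(I)$ and integrate by parts:
\[
\int_{-1}^1 p_n g\,dx \;=\; [P_n g]_{-1}^1 - \int_{-1}^1 P_n g'\,dx.
\]
Since $P_n\to F$ uniformly on $I$, the right side converges to $[Fg]_{-1}^1-\int_{-1}^1 F g'\,dx$, which equals $\int_{-1}^1 fg\,dx$ by integration by parts for the absolutely continuous $F$. For general $g\in L^1(I)$, a standard three-epsilon approximation using density of $C^1$ in $L^1$, combined with the uniform bound $\lVert p_n\rVert_I\leq C\lVert f\rVert_\infty$, upgrades this to $\int p_n g\to\int f g$ for all $g\in L^1(I)$.

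For the failure of $C=1$, I would take $f=\chi_{[0,1]}-\chi_{[-1,0]}$, so $\lVert f\rVert_\infty=1$, and suppose for contradiction that polynomials $p_n$ with only real zeros satisfy $\lVert p_n\rVert_I\leq 1$ and $p_n\to f$ weak-$\ast$. Testing against $g=\chi_{(0,1)}\in L^1(I)$ gives $\int_0^1 p_n\,dx\to 1$; since $p_n\leq 1$ on $(0,1)$, the nonnegative function $1-p_n$ has $L^1$-norm on $(0,1)$ tending to zero, whence $p_n\to 1$ in measure on $(0,1)$. By the Clunie-Kuijlaars theorem (Corollary~1.3 of \cite{MR1294323}) quoted in the introduction, $p_n\to 1$ uniformly on compact subsets of $\complex$. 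The identical argument on $(-1,0)$ forces $p_n\to -1$ uniformly on compacta, which is an immediate contradiction.

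The main obstacle is really Theorem~\ref{weier++} itself; once it is in hand, the positive half of Theorem~\ref{weak limit} is essentially an integration by parts, and the negative half is a clean application of the Clunie-Kuijlaars rigidity result already recalled in the introduction. The only delicacy in the positive half is ensuring the $L^\infty$ bound on $p_n$ propagates through the weak-$\ast$ argument, which is why the Lipschitz clause of Theorem~\ref{weier++}, rather than mere uniform approximation, is essential.
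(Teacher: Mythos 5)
Your proof is correct and shares the paper's core reduction: set $F(x)=\int_{-1}^x f(t)\,dt$, apply Theorem~\ref{weier++} to the $\lVert f\rVert_\infty$-Lipschitz function $F$, take $p_n=P_n'$, and invoke the Lipschitz clause for $\sup_n\lVert p_n\rVert_I\leq C\lVert f\rVert_\infty$. Where you diverge from the paper is in the mechanism for weak-$\ast$ convergence. You integrate by parts against $g\in C^1$, use uniform convergence $P_n\to F$ to pass to the limit, and then extend to general $g\in L^1$ by density plus the uniform bound on $\lVert p_n\rVert_\infty$. The paper instead tests against step functions constant on the blocks $\{G^n_k\}$, using the sharper output of Corollary~\ref{Lip approx} --- namely $\int_{G^n_k}p_n=\int_{G^n_k}f$ exactly, apart from finitely many blocks near $\pm1$ --- so that $\int(p_n-f)\psi_n$ vanishes outright; the passage to $L^1$ is then the same density principle. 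Your route is somewhat more modular in that it consumes only the published statement of Theorem~\ref{weier++} rather than the endpoint-matching structure internal to the construction, at the cost of an integration by parts; the paper's route avoids integration by parts but leans on the finer corollary. For the failure of $C=1$, you take $f=\chi_{[0,1]}-\chi_{[-1,0]}$ and obtain a contradiction from two incompatible locally-uniform limits on compacta of $\complex$ (namely $p_n\to 1$ and $p_n\to -1$), whereas the paper takes $f=\chi_{[0,1]}$ and contradicts the continuity that the Clunie--Kuijlaars theorem imposes on the in-measure limit; both are clean one-step applications of the same rigidity result.
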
 

Here $\lVert g\rVert_\infty$ denotes the $L^\infty$ norm on $I$, and 
$p_n \to f$ weak-$\ast$ in $L^\infty(I, dx)$ if
\[ \int p_n g dx \to \int  fg dx\]
for every Lebesgue integrable function  $g$ on $I$. 

The polynomials constructed in our proof of Theorem \ref{weak limit} will
diverge pointwise almost everywhere, but this is not 
an artifact of the proof; it is forced in many cases. 
We claim  that if $f$  in Theorem \ref{weak limit} is not in the 
Laguerre-P{\'o}lya class, then    $\{p_n(x)\}$  diverges almost everywhere
on the set where $f$ is  non-zero.
To prove this, suppose $f$ is not in the Laguerre-P{\'o}lya class and 
that $\{p_n\}$ is uniformly bounded, has only real zeros,  and
converges  weak-$\ast$ to $f$. 
The Clunie-Kuijlaars theorem stated above 
implies that if $\{p_n\}$ has pointwise limits on a set
of positive measure, these limits must be zero almost everywhere.
However, if a uniformly bounded sequence  $\{p_n\}$ converges 
pointwise  to $0$  on a set  $E$, then 
the dominated convergence theorem  (e.g., Theorem 2.24  of \cite{MR1681462})
implies that 
$\int_E p_n \to 0$.  By weak-$\ast$ convergence of $p_n$ to $f$ we then have 
\[0= \lim_n \int_E p_n = \lim_n \int  p_n\chi_E
=  \int  f \chi_E  =  \int_E f,\]
where we have taken $g=\chi_E$ in the definition of weak-$\ast$ 
convergence. As usual,  $\chi$ is the characteristic (or indicator) 
function of $E$ ($\chi=1$ on $E$ and $\chi=0$ off $E$).
Since this also holds for every measurable subset of $E$, we deduce 
$f$ is zero almost everywhere on $E$ (otherwise either $\int_{E\cap\{f>0\}} f $
or $\int_{E\cap\{f<0\}} f $ would be non-zero). 
This proves the claim.

In order to prove Theorem \ref{weier++},  
we  want to write $f$ as the uniform limit of polynomials
$p_n$ whose derivatives have the form 
\begin{align} \label{product formula} 
	p_n'(x) = C_n \prod_{k=1}^n  (x-z^n_k),
\end{align}
where  $C_n  \in \reals$ and $\{ z_k^n\}_{k=1}^n \subset [-1,1]$.
These points will be perturbations of the roots  of  $T_n$, 
the degree $n$  Chebyshev polynomial (of 
the first kind). We briefly recall the definition. 

Let $J(z) = \frac 12(z + \frac 1z)$ be the Joukowsky map. 
Note that a point $z=x+iy$ on the unit circle is mapped to $x \in [-1,1]$, 
and $J$ is a 1-1 holomorphic  map of $ \disk^* = \{z:|z|>1\}$ to $U=\complex \setminus [-1,1]$.
Thus it has a  holomorphic inverse $J^{-1}: U \to  \disk^*$. 
Then $T_n=J((J^{-1})^n)$ is a $n$-to-$1$ holomorphic map of $U$ to $U$  that 
is continuous across $ \partial U= [-1,1]$.  By Morera's theorem 
(e.g., Theorem 4.19 of \cite{MR4321146}), 
such a function is entire (holomorphic on
the whole plane).  Since $T_n$ is finite-to-$1$, 
Picard's great theorem (e.g., Theorem 10.14 of \cite{MR4321146})
implies  it is a polynomial, and since $T_n$ is $n$-to-1,
the fundamental theorem of algebra implies it must have
degree $n$. Unwinding the definitions, $T_n$  maps $[-1,1]$ into itself, 
takes the extreme values $\pm 1$  at the points 
$\{x_k^n\} =\{\cos( \pi \frac {k}{n})\}_{k=0}^{n}$ (the vertical 
projections of the $n$th roots of unity), and it has its 
roots at $\{r_n^k\} =\{\cos( \pi \frac {2k-1}{2n})\}_{k=1}^{n}$ (the 
vertical projections of the midpoints between the roots of
unity).  
More background and facts about the Chebyshev polynomials 
will  be given in Section \ref{length sec}.

The basic idea of the proof of Theorem \ref{weier++} is to consider 
polynomials as in (\ref{product formula}) where $z^n_k  = r^n_k +y^n_k$ 
are small perturbations of the Chebyshev roots.
Fix a large positive integer $n$ and consider the Chebyshev polynomial $T_n$. 
For $k=1, \dots, n-1$, let  $I_k^n =[r^n_k, r^n_{k+1}]$ 
denote the interval between the $k$th and $(k+1)$st roots of 
$T_n$.  We call these ``nodal intervals'' and call 
the part of the graph of $T_n$ above $I^n_k$ a ``node'' of $T_n$.
Every node of $T_n$ is 
either positive or negative.  Suppose it is positive. 
If we move the roots at the endpoints of 
$I^n_k$ farther apart (but leave all the other
roots of the Chebyshev polynomial 
unchanged), then the node between them becomes higher, and the 
two adjacent negative nodes each  becomes smaller (less negative). Thus the integral of 
the new  polynomial over the union of these three intervals becomes more positive.
See Figure \ref{plot_2pt_full_square}.
This figure, and many others in this paper, was drawn using the
\verb+MATLAB+ program \verb+Chebfun+ by L.N. Trefethen and his
collaborators. See \cite{Driscoll2014}.

\begin{figure}[t]
\centerline{
\includegraphics[width=.6\textwidth]{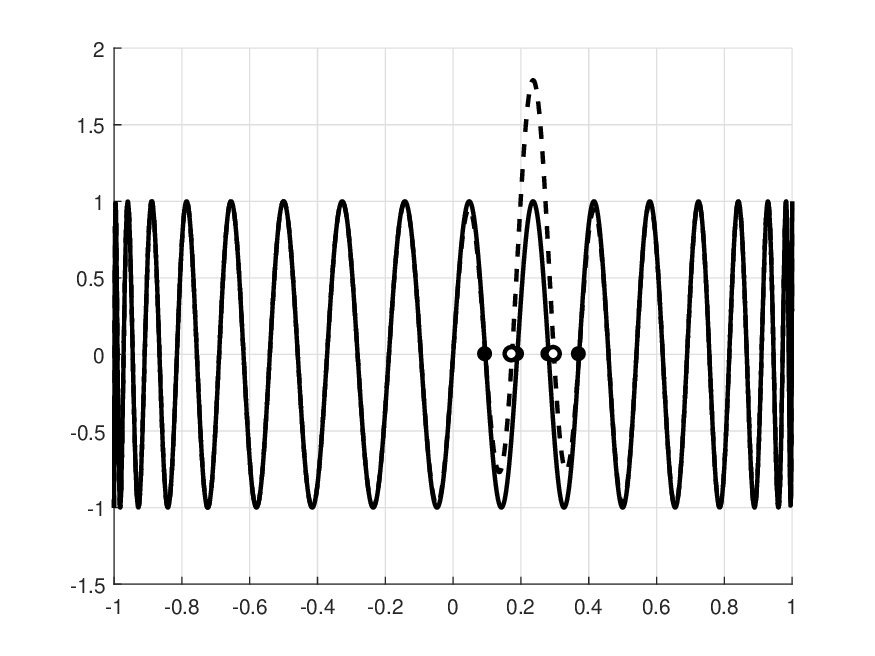}  
\includegraphics[width=.6\textwidth]{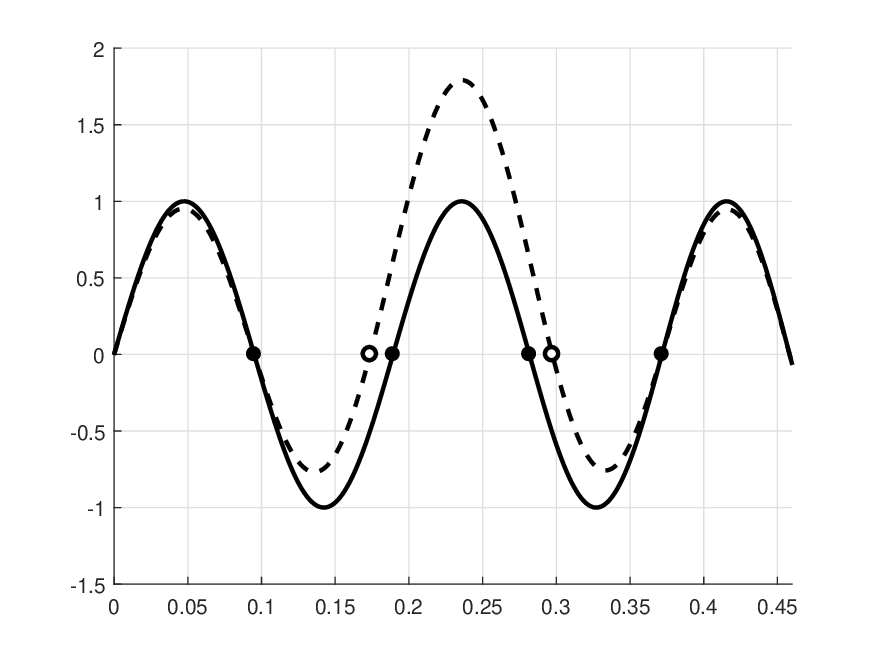} 
} 
\caption{ 
A 2-point perturbation of $T_{33}$. The left picture shows all of 
$[-1,1]$ and the right shows  
an enlargement of the interval where the perturbation occurs. 
The Chebyshev polynomial is solid and the perturbation is dashed.
The white dots are the two new root locations. 
}\label{plot_2pt_full_square}
\end{figure}

When we move  each endpoint of $I_k^n$  by $t|I^n_k|$,
the integral of the polynomial 
over $I^n_k$ changes by at least some  fixed  multiple of  $ t |I^n_k|$. 
A quantitative estimate  like this is one of the key results 
of this paper, although we shall give it for perturbations 
involving three roots instead of only  two. 
We prefer the (more complicated)  3-point perturbations, 
because we can choose them  so 
that the effect on $T_n$ far from the perturbed roots decreases 
more quickly (like $d^{-3}$ instead of $d^{-2}$, where $d$ is the 
distance to the perturbed roots). 
A precise estimate is formulated and proven in Section \ref{3-pt sec}. 
See Figure \ref{Perturb_1bump_sign} for an example of a 3-point 
perturbation.

\begin{figure}[t]
\includegraphics[width=.6\textwidth]{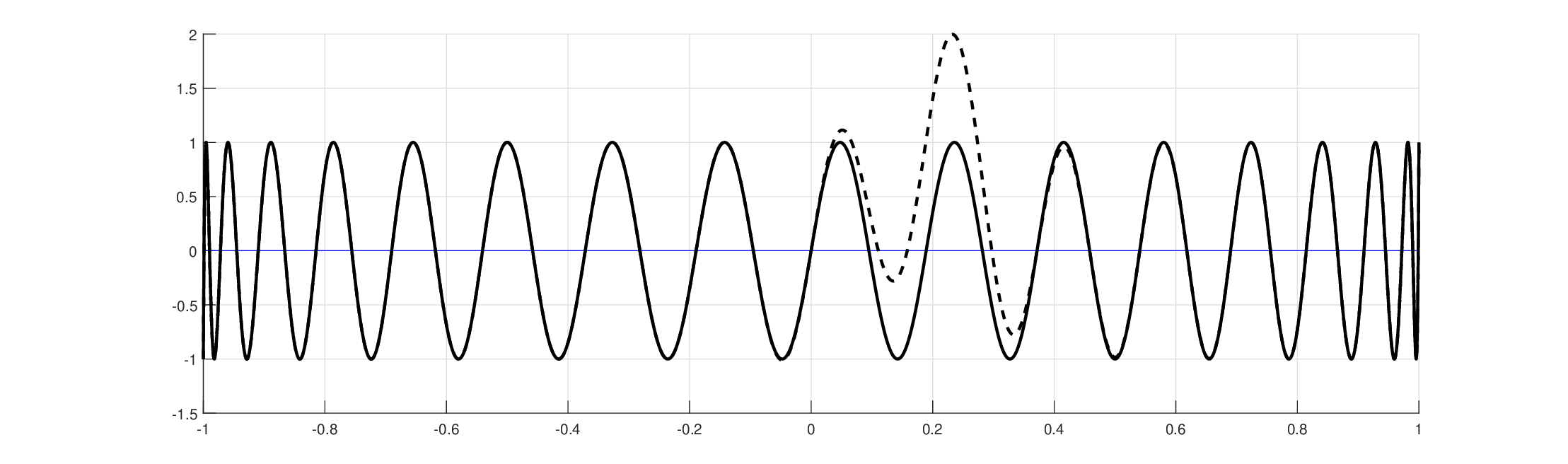}
\includegraphics[width=.6\textwidth]{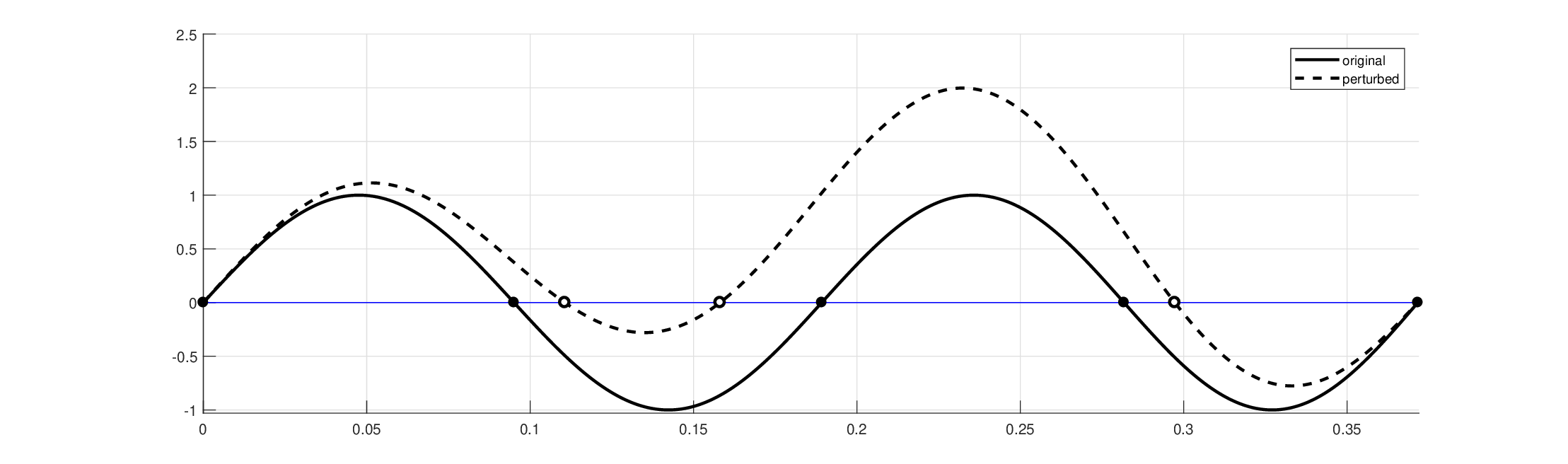}
\caption{ 
A 3-point perturbation. 
The top figure shows $ T_{33}$ 
(solid) and the perturbation $\widetilde T_n$ (dashed) on $[-1,1]$. The bottom figure 
is an enlargement around the perturbed roots. 
}\label{Perturb_1bump_sign}
\end{figure}

See Figure
\ref{Perturb_8bump_sign} for  a degree 33 approximation to 
$f(x) = |x|$.
A degree 201  approximation is shown in Figure \ref{Approx_ABS_deg201}, which 
also shows a log-log plot  showing the rate of approximation  versus
the degree of the polynomial.
Our approximations were chosen by enlarging negative nodes 
to the left of the origin and enlarging positive nodes to the right,
but no attempt was made to do this in an optimal way. 
Nevertheless, the rate of approximation is 
approximately the reciprocal of the degree. This is a little 
surprising, since the best sup-norm approximation of $f(x) =|x|$
by a degree $n$ polynomial (with no restrictions on the critical points) 
satisfies $\lVert f-p_n\rVert \sim (.280169)/n$, e.g.,
see Chapter 25 of \cite{MR3012510}.
Thus our approximations (which are just a first guess)
are fairly close to the best approximation.
Figure \ref{Approx_Lip_1_derv} gives another example
of approximating a  Lipschitz
function by weakly approximating its derivative.

\begin{figure}[htb]
\centerline{ \includegraphics[height=1.6in]{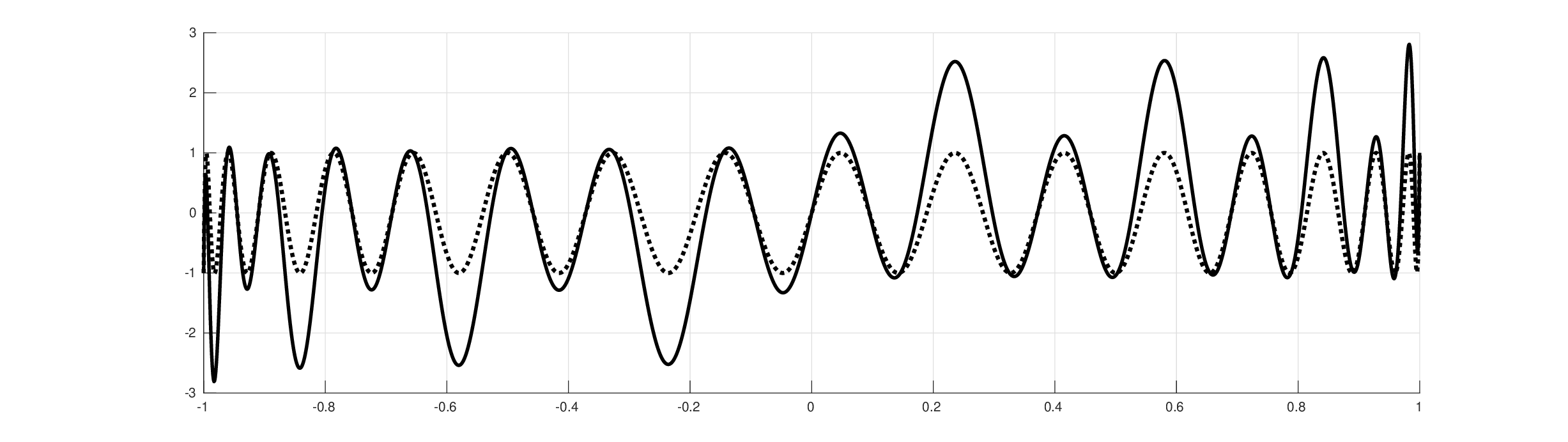} } 
\centerline{ \includegraphics[height=1.6in]{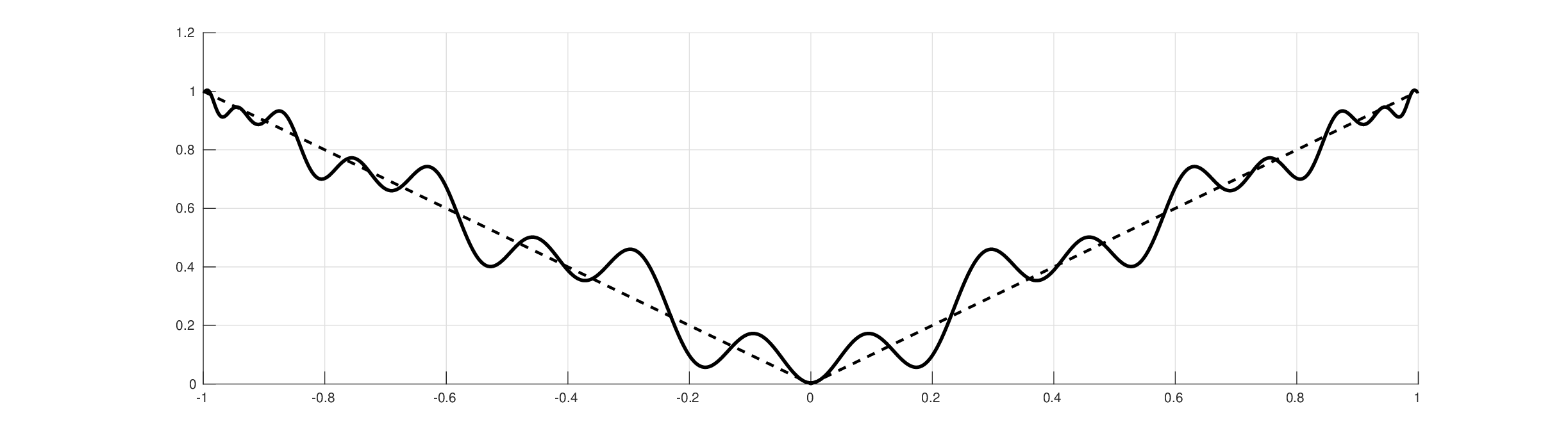} } 
\caption{ \label{Perturb_8bump_sign}
On top  we have perturbed 
	$T_{33}$ (dashed)  to obtain  $p'$ (solid): four pairs 
in $[0,1]$ chosen to make the function more positive, and  four
pairs in $[-1,0]$ chosen to make it more negative. 
The bottom picture shows  $p = \int p'$ (solid), which approximates
$f(x) = |x|$ (dashed).
See Figure \ref{Approx_ABS_deg201} for a higher degree  approximation.
}
\end{figure}

\begin{figure}[htb]
\centerline{ 
\includegraphics[height=2.0in]{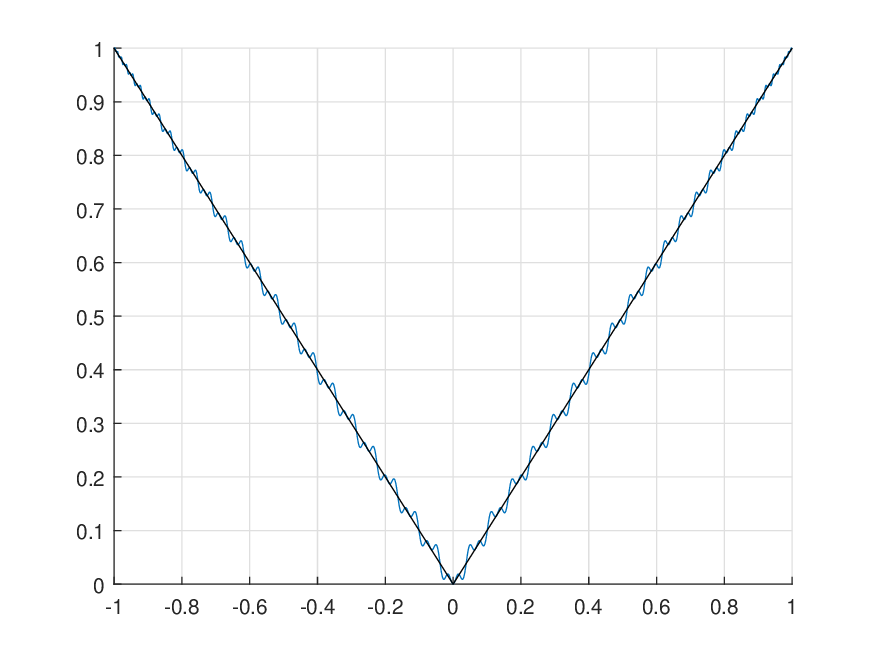}  
\includegraphics[height=2.0in]{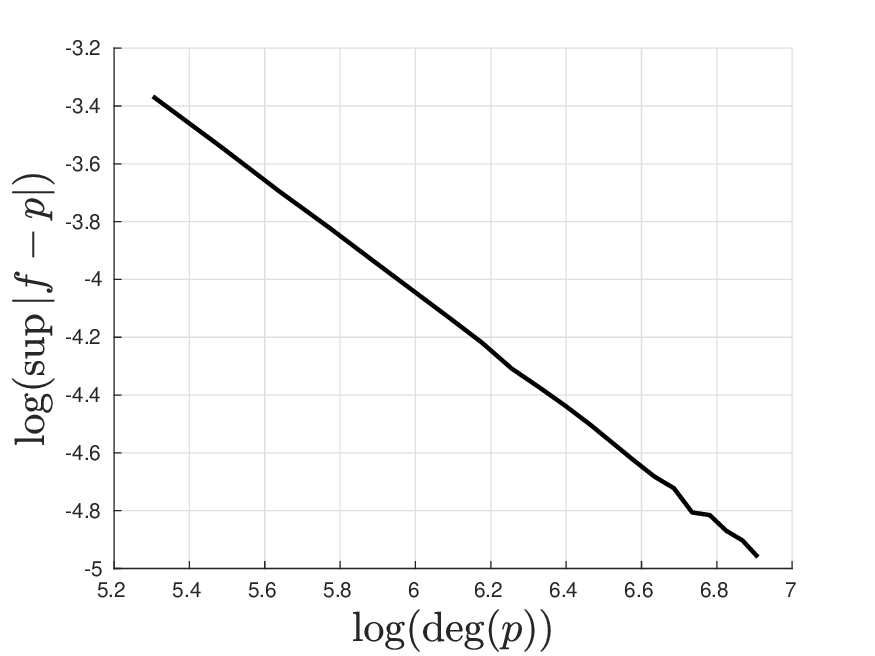}  
}
\caption{ \label{Approx_ABS_deg201}
On the left is a degree 201 polynomial approximating $|x|$.
The right picture is a log-log plot of the sup-norm difference between 
$f(x)=|x|$ and our approximation for degrees between 200 and 1000. 
	The best linear fit is $\approx (-.9912) t+1.8972$. 
The optimal polynomial approximations (no restrictions) behave like 
$\approx -t-1.2724$.
}
\end{figure}

\begin{figure}[htb]
\centerline{
        \includegraphics[height=2.2in]{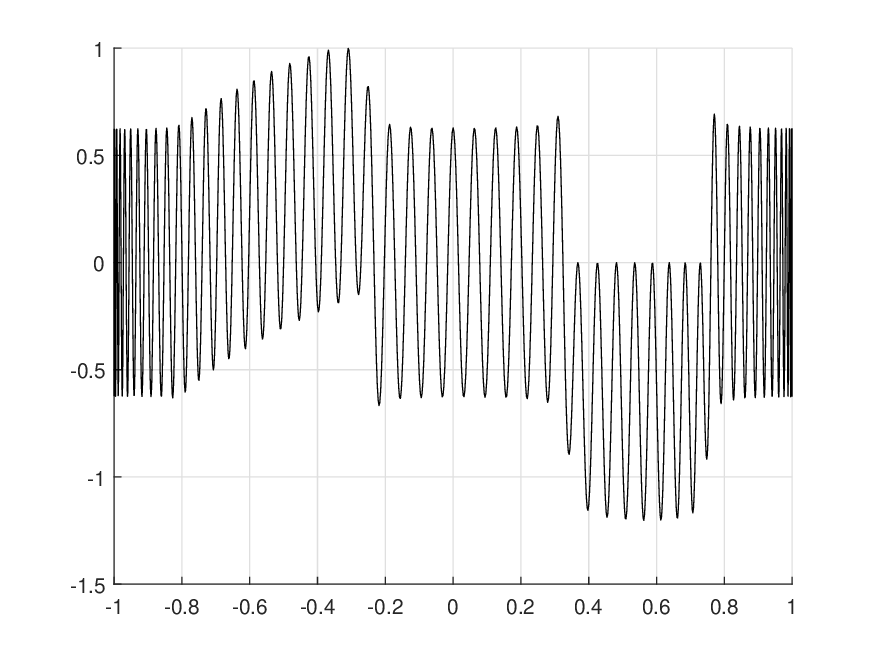}
       \includegraphics[height=2.2in]{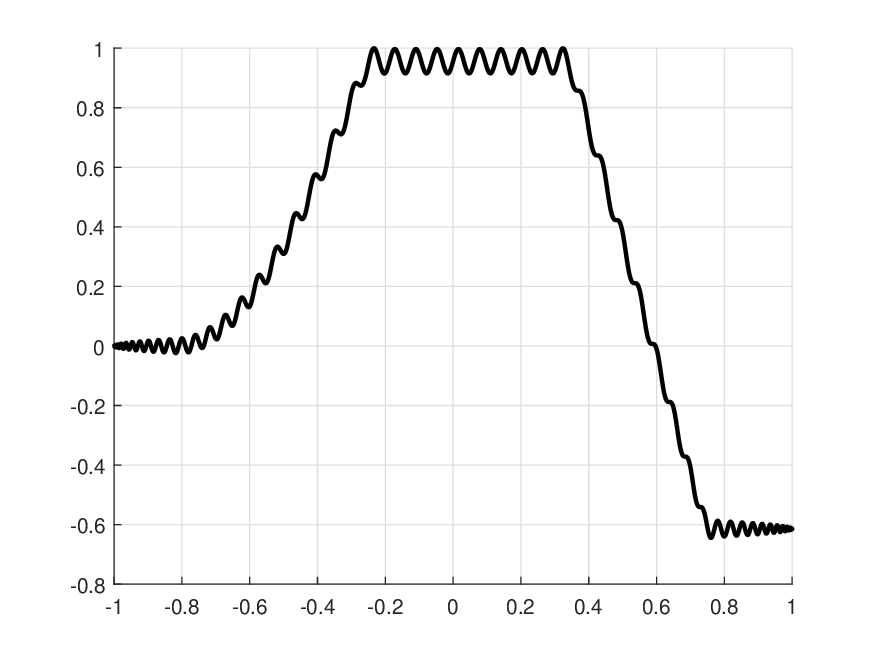} }
\caption{ \label{Approx_Lip_1_derv}
On the left is the perturbed  Chebyshev polynomial
of degree 100, and on the right is its integral. From the picture it seems
clear that any Lipschitz function can be approximated; the goal of the paper
is to prove this is correct.
}
\end{figure}

Briefly, the proof of Theorem \ref{weier++} will proceed as follows.
We  convert  the $n-1$ nodal intervals  into  $N=(n-1)/4$ larger intervals 
$\{G^n_k\}_{k=1}^{N}$,  by taking  unions 
of groups of four adjacent nodal intervals. We would like the origin to be 
the common endpoint of two such intervals, and the  whole arrangement to 
be symmetric with respect to the origin, and this leads us to assume $n-1$ is a
multiple of eight. We then estimate how the Chebyshev polynomial changes when 
we slightly perturb the three interior roots  in a single interval $G^n_k$. 
We make precise the idea that the change is large inside $G^n_k$ and small outside 
this interval (and decays as we move away from $G^n_k$). 
Most of the computations are done when $G^n_k$ is linearly rescaled to be  
approximately $[-2,2]$, 
but these estimates are easily converted to  estimates on the original intervals.
These estimates will show that  there is a $t>0$, so that  for any 
vector $y=(y_1, \dots,y_N)$  with coordinates $|y_k|\leq t$, 
there is a perturbation of the roots 
of $T_n$  that lie in the interior of $G^n_k$
so that  the integral of the perturbed polynomial  
over $G^n_k$ equals $y_k \cdot |G^n_k|$. Of course, perturbations of roots in 
other intervals may destroy this equality, but using the Brouwer fixed point 
theorem, we will show that there is a perturbation of all the roots that gives
the desired equality over every $G^n_k$ simultaneously (except for a bounded
number of exceptions near $\pm 1$). 

In order to prove  Theorem \ref{weier++}, it suffices to consider   functions $f$ 
with a small Lipschitz constant,  e.g.,  less than the
value $t$ chosen above.
For each interval $G^n_k$, we take $y_k =  \Delta(f, G^n_k)/|G^n_k|$ where 
$ \Delta(f,[a,b])   =f(b) -f(a)$. 
Then $|y_k| \leq t$ for all $k$ since $f$ is $t$-Lipschitz. 
Using Brouwer's theorem we can therefore perturb  the roots of $T_n$ to 
obtain a perturbed polynomial $T_n(x,y)$ so 
that  $ \int_{G^n_k} T_n(x,y) dx = y_k |G^n_k|$ for every $k$.
Then any anti-derivative 
$F$ of the perturbed polynomial satisfies $\Delta(F, G^n_k) = \Delta(f, G^n_k)$ 
for every $k$, so choosing an anti-derivative such that $F(0)=f(0)$,
implies that $F$ equals $f$ at every endpoint of every $G^n_k$ (again, with 
a small number of exceptions near $\pm 1$). 
Since both $F$ and $f$  are Lipschitz with 
bounds independent of $n$, and  since $|G^n_k| \to 0$ as $n \nearrow \infty$, 
this implies $F $ uniformly approximates $f$ when $n$ is large enough, 
proving  Theorem \ref{weier++}. 

Roughly speaking, the remainder of the paper divides into four parts.
Part I: Sections \ref{length sec}-\ref{area sec} describe basic properties of Chebyshev 
polynomials and their nodal intervals.
Part II: Sections \ref{perturb sec}-\ref{extreme sec}  
define the perturbations $T_n(x,y)$ of the 
Chebyshev polynomials $T_n$ and give estimates for 
how the perturbed polynomials  compare to $T_n(x)$.
Part III: Sections \ref{interior sec}-\ref{exterior sec} 
verify the conditions needed 
to apply Brouwer's theorem and we prove Theorem \ref{weier++} in Section \ref{fixed pt sec}. 
Part IV gives some auxiliary results: Section \ref{weak sec}  proves
Theorem \ref{weak limit},  Section \ref{diverges ae} shows 
that our polynomial approximants have derivatives that diverge
almost everywhere, and an example of how Theorem \ref{weier++} 
can fail for some disconnected sets is given in Section \ref{fails sec}.

When $A$ and $B$ are both quantities that depend on a common parameter, then 
we use the usual notation $A=O(B)$ to mean that the ratio $B/A$ is bounded independent 
of the parameter. 
The more precise  notation $A=O_C(B)$  will  mean $|A|\leq C|B|$. 
For example $ x = 1 + O_2(\frac 1n)$ 
is simply a more concise way of writing $ 1 -\frac 2n \leq x \leq 1+\frac 2n$.
The notation $A = \Omega_C(B)$ means $A \geq C|B|$ or,
equivalently, $ B = O_C(A)$.
We write $A \simeq B$ if both $A=O(B)$ and $B=O(A)$. 

 I thank  the anonymous referee for a meticulous reading of the manuscript 
and for many very helpful suggestions that improved the clarity and correctness 
of the exposition.

\section{Estimating the length of the nodal intervals}  \label{length sec}

The Chebyshev polynomials defined in the introduction have a number 
of alternate definitions, e.g., see \cite{MR3012510}. For $|x| \leq 1$, we can write 
\[ T_n(x) = \cos( n \arccos(x)),\]
\[ T_n(x) = \frac 12 [(x-\sqrt{x^2-1})^n +(x + \sqrt{x^2-1})^n],\]
\[ T_n(x) = \sum_{k=0}^{\lfloor n/2\rfloor} \binom{n}{2k} (x^2-1)^k x^{n-2k},\]
or $\{T_n\}$ can be defined by the three term recurrence 
\[ T_0(x) =1, \quad  T_1(x) = x, \quad 
T_{n+1}(x) = 2x T_n(x) - T_{n-1}(x) \text{ for } n \geq 1.\]
The latter makes it clear that 
\[ T_n(x) = 2^{n-1}  \prod_k (x -r^n_k),\]
where $\{r^n_k\}$ are the Chebyshev roots  defined in the introduction. 
In particular, the  coefficient of $x^n$ in $T_n$ is $2^{n-1}$. 
Among polynomials of degree $n$ with leading coefficient $2^{n-1}$,
$T_n$ minimizes the supremum norm over $[-1,1]$;  this is the Min-Max 
property (a special case is proven in Lemma \ref{cubic}).
Also note that $\lVert T_n\rVert_\infty = 1$. As mentioned in the
introduction,   $T_n$ has $n-1$  critical points,   all with  singular 
values $-1$ or $1$, and  the endpoints are also extreme points with 
$T_n(1)=1, T_n(-1) = (-1)^n$.
When we perturb the 
roots of $T_n$, some of these extremal values must increase in absolute value, 
and the construction in this paper is based on 
controlling where and how much this happens.

The Chebyshev polynomials are orthogonal with respect to $d \mu = dx/\sqrt{1-x^2}$, 
and expansions in terms of Chebyshev polynomials are  extremely useful 
in numerical analysis; indeed, Chebyshev polynomials are the direct  analog on $[-1,1]$ 
of Fourier expansions on the circle, and many theorems about Fourier expansions 
transfer to Chebyshev expansions. Chebyshev polynomials are also well behaved 
under multiplication and composition, i.e., 
\[ T_n(x) T_m(x) =  \frac 12[T_{m+n}(x) + T_{|m-n|}(x)],\]
\[ T_n(T_m(x)) = T_{mn}(x) .\]
With the additional condition $\deg(T_n)=n$, 
the latter characterizes   Chebyshev polynomials and the 
power functions $\{x^n\}$  \cite{MR1501252} (up to a linear change of variable).

In this paper, we will not need most of the properties above, but we will need
precise estimates of the lengths of the nodal intervals $\{I^n_k\}$, 
and of the integral of $T_n$ over these nodal 
intervals.  The length estimates are addressed in this section and the 
area estimates in the following section. 

Let  $J^n_k = [ \pi \frac {2k-1}{2n}, \pi \frac{2k+1}{2n}]$ for $k=1, \dots, n-1$.
Each of these intervals has length $\pi/n$.
Let $I^n_k = -\cos(J^n_k)$, $k=1, \dots, n-1$, be the  nodal interval between
the $k$th and $(k+1)$st roots of $T_n$, i.e., $I^n_k=[r^n_k, r^n_{k+1}]$. 
(We introduce the minus sign so the $I^n_k$ are labeled left to right in $[-1,1]$.)
Note that when $n$ is even, there are an odd number of nodal intervals and 
$I^n_{n/2}$ contains the origin as its midpoint. When $n$ is odd, the intervals 
$I^n_{(n-1)/2}$ and $I^n_{(n+1)/2}$ share the origin as an endpoint. In both cases, 
the intervals with $k  \leq n/2$ cover $[-1,0]$. 
In our application, we always take $n$ odd, but the estimates in the section
apply to both cases.

Let $|I|$ denote the length of an interval $I$.
Our first goal is to establish some basic facts about  lengths of the 
nodal intervals and the distances between them. 
Note that, by symmetry,  $|I^n_k| = |I^n_{n-k}|$ so most of our estimates 
are only given for $1 \leq k \leq (n-1)/2$, i.e., subintervals of $[-1,0]$.

\begin{lemma}  \label{length lower bound}
	$|I^n_k|\leq  |J^n_k| = \pi/n$.
\end{lemma}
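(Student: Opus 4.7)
The plan is to use the fact that $I^n_k$ is the image of $J^n_k$ under the map $\varphi(\theta) = -\cos(\theta)$, and then note that this map is $1$-Lipschitz because $|\varphi'(\theta)| = |\sin(\theta)| \leq 1$ for all $\theta \in \reals$. Since a $1$-Lipschitz map cannot increase the length of an interval, we immediately get $|I^n_k| \leq |J^n_k|$, and $|J^n_k| = \pi/n$ follows directly from the definition.

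More concretely, I would first unwind the definition: with $J^n_k = [\pi(2k-1)/(2n),\pi(2k+1)/(2n)] \subset [0,\pi]$, the fact that $\cos$ is monotone decreasing on $[0,\pi]$ means
\[
I^n_k = \bigl[-\cos(\pi(2k-1)/(2n)),\ -\cos(\pi(2k+1)/(2n))\bigr],
\]
so
\[
|I^n_k| = \cos(\pi(2k-1)/(2n)) - \cos(\pi(2k+1)/(2n)).
\]
Then I would apply the mean value theorem to $\cos$ on $J^n_k$ to write this difference as $|\sin(\xi)|\cdot |J^n_k|$ for some $\xi \in J^n_k$, and conclude using $|\sin(\xi)| \leq 1$.

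There is no real obstacle here; the lemma is essentially immediate from the fact that $\varphi(\theta) = -\cos(\theta)$ contracts (weakly) arc-length on the circle down to length on $[-1,1]$. The only thing to be careful about is ensuring $J^n_k \subset [0,\pi]$ so that the endpoints of $I^n_k$ are in the stated order, but this is automatic for $1 \leq k \leq n-1$.
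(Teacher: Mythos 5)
Your proof is correct and uses essentially the same idea as the paper: the paper observes that $I^n_k$ is the vertical projection of an arc of length $\pi/n$ on the unit circle, which is just a geometric phrasing of the fact that $-\cos$ is $1$-Lipschitz. Your version via the mean value theorem is a bit more explicit but not a different argument.
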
 

\begin{proof}
Clearly  $I_k^n$  is  the vertical  projection of $ \exp(i J^n_{n-k})$, 
which has arclength $\pi/n$. 
\end{proof} 

The following says the biggest intervals are adjacent to the origin, 
and that the lengths monotonically 
decrease as we move out towards the endpoints.

\begin{lemma} \label{length monotone} 
	For $x\in [r_1^n, r_{n-1}^n]$, let $I_x$ be the nodal  interval 
	containing $x$ (if $x$ is the common endpoint of two nodal intervals,
	then take $I_x$ to be the nodal interval containing $x$ and closer to $0$). 
	Then $|x|<|y|$ implies $|I_x|\geq |I_y|$. 
\end{lemma}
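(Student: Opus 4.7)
The plan is to compute an explicit closed form for $|I^n_k|$ and then read off the monotonicity from the shape of $\sin$ on $(0,\pi)$.

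First I would apply the sum-to-product identity to the definition $I^n_k = -\cos(J^n_k)$:
\[
|I^n_k| \;=\; \cos\!\Bigl(\tfrac{\pi(2k-1)}{2n}\Bigr) - \cos\!\Bigl(\tfrac{\pi(2k+1)}{2n}\Bigr) \;=\; 2\,\sin\!\Bigl(\tfrac{\pi k}{n}\Bigr)\,\sin\!\Bigl(\tfrac{\pi}{2n}\Bigr).
\]
Two observations follow at once. First, $|I^n_k| = |I^n_{n-k}|$, which matches the obvious symmetry of the roots of $T_n$ about $0$. Second, as $k$ runs over $\{1,\dots,n-1\}$, the factor $\sin(\pi k/n)$ samples a unimodal function on $(0,\pi)$: it attains its maximum at $k \approx n/2$ and is strictly decreasing as $k$ moves toward either end. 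Correspondingly, $|I^n_k|$ is largest for the nodal intervals nearest the origin and decreases monotonically as the interval slides toward $\pm 1$.

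To deduce the lemma, I would reduce to the case $0 \le x < y$ using the symmetry $|I^n_k| = |I^n_{n-k}|$. If $y < 0$, replace both $x,y$ by their negatives; if $x < 0 \le y$, replace $x$ by $|x| < y$. In either reduction, the relevant nodal interval is reflected through $0$ and retains its length. With $0 \le x < y$, the indices $k_x \le k_y$ of $I_x$ and $I_y$ both lie on the right-hand side of the unimodal sequence (namely at or above $\lfloor n/2 \rfloor$), so the decreasing property gives $|I_x| \ge |I_y|$.

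The only subtlety is the tie-breaking convention when $x$ or $y$ is a Chebyshev root and is therefore shared between two nodal intervals. The lemma chooses the one closer to $0$; by the unimodality established above, that interval has the larger (or equal) length, so the inequality is preserved under the convention. For the specific case $x = 0$: if $n$ is even, $0$ lies in the interior of the unique middle interval $I^n_{n/2}$, which has maximal length; if $n$ is odd, $0 = r^n_{(n+1)/2}$ is shared between two equal-length intervals by symmetry, so the choice is immaterial. No step presents a genuine obstacle: once the closed form $|I^n_k| = 2\sin(\pi/(2n))\sin(\pi k/n)$ is in hand, the rest is unpacking the indexing and the symmetry.
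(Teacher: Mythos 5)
Your proof is correct, but it takes a noticeably different route from the paper's. The paper disposes of this lemma in one sentence by a purely geometric observation: the nodal intervals are vertical projections of equal-length arcs on the unit circle, and the slope of the circle steepens as one moves from $0$ toward $\pm 1$, so the projections shrink. You instead derive the closed form $|I^n_k| = 2\sin(\pi k/n)\sin(\pi/(2n))$ from the sum-to-product identity and read monotonicity off the unimodality of $\sin$ on $(0,\pi)$. Both arguments are sound; the geometric one is shorter and requires no computation, while yours is more explicit and quantitative. In fact the paper recovers the same product formula shortly afterward in the proof of Lemma~\ref{length estimate} (via the identity in Equation~(\ref{diff rule cos})), so your computation duplicates work that the paper does anyway — you simply do it one lemma earlier and extract the monotonicity from it directly rather than separately. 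Your handling of the tie-breaking convention and the $x=0$ edge case is careful and correct, covering a detail the paper leaves implicit.
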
 

\begin{proof}
This is also obvious since the intervals in question 
are vertical projections of 
equal length arcs on the unit circle, and the slope of the circle increases as 
we move toward either $\pm 1$ from $0$. 
\end{proof}

\begin{lemma} \label{length estimate}
For $1 \leq k \leq (n-1)/2$, 
$ \frac {4k}{n^2} \leq |I^n_k| 
	\leq \frac {k \pi^2} {n^2} \approx (9.8696) \frac k{n^2}$.
\end{lemma}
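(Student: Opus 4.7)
The plan is to get an exact closed form for $|I^n_k|$ and then bound it above and below using the elementary sine inequalities $\sin x \leq x$ and Jordan's inequality $\sin x \geq 2x/\pi$ on $[0,\pi/2]$.

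First I would unfold the definition. Since $I^n_k = -\cos(J^n_k)$ where $J^n_k = [\pi(2k-1)/(2n),\,\pi(2k+1)/(2n)]$, and since for $1 \leq k \leq (n-1)/2$ the interval $J^n_k$ lies in $[0,\pi/2]$ (so $\cos$ is monotone decreasing and positive there), the length is
\[
|I^n_k| \;=\; \cos\!\Bigl(\pi\tfrac{2k-1}{2n}\Bigr) - \cos\!\Bigl(\pi\tfrac{2k+1}{2n}\Bigr).
\]
Next, apply the sum-to-product identity $\cos A - \cos B = -2\sin\!\bigl(\tfrac{A+B}{2}\bigr)\sin\!\bigl(\tfrac{A-B}{2}\bigr)$ with $A = \pi(2k-1)/(2n)$ and $B = \pi(2k+1)/(2n)$. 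The half-sum is $\pi k/n$ and the half-difference is $-\pi/(2n)$, yielding the clean formula
\[
|I^n_k| \;=\; 2\sin\!\Bigl(\tfrac{\pi k}{n}\Bigr)\sin\!\Bigl(\tfrac{\pi}{2n}\Bigr).
\]

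For the upper bound, I would use $\sin x \leq x$ on both factors to get $|I^n_k| \leq 2 \cdot (\pi k/n)(\pi/(2n)) = \pi^2 k/n^2$. For the lower bound, I would use Jordan's inequality $\sin x \geq 2x/\pi$ on $[0,\pi/2]$; the hypothesis $k \leq (n-1)/2$ is exactly what guarantees $\pi k/n \leq \pi/2$ so the inequality applies to both $\sin(\pi k/n) \geq 2k/n$ and $\sin(\pi/(2n)) \geq 1/n$, giving $|I^n_k| \geq 2 \cdot (2k/n)(1/n) = 4k/n^2$.

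There is no real obstacle: once the exact product formula is in hand, the bounds are immediate from standard sine inequalities. The only thing to double-check is that the range $1 \leq k \leq (n-1)/2$ keeps $\pi k/n$ in $[0,\pi/2]$, which is precisely why the hypothesis is stated that way. No use of any of the other listed Chebyshev identities is needed.
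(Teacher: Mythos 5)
Your proof is correct and follows the same approach as the paper: use the cosine sum-to-product identity to write $|I^n_k| = 2\sin(\pi k/n)\sin(\pi/(2n))$, then apply $\tfrac{2}{\pi}x \leq \sin x \leq x$ on $[0,\pi/2]$ to both factors. You have simply spelled out the final arithmetic that the paper leaves to the reader.
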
 

\begin{proof}
	Note that  $I_k^n$ has left endpoint $ \cos( \pi \frac {2k+1}{2n})$ and 
	right  endpoint $ \cos( \pi \frac {2k-1}{2n})$. 
From the difference rule for cosine, 
\begin{align} \label{diff rule cos}
\cos \alpha - \cos \beta
= -2  \sin  \Big(\frac{\alpha+\beta}2\Big) \sin \Big(\frac{\alpha-\beta}2\Big),
\end{align}
	we can deduce 
\begin{align*}	
	\cos( \pi \frac {2k-1}{2n}) -  \cos( \pi \frac {2k+1}{2n}) 
	= 
	2 \sin( \pi \frac {2k}{2n} )  \cdot 
	\sin(  \frac \pi {2n} ).
\end{align*}	
Now use $ \frac 2 \pi x\leq  \sin x \leq x $ on $[0, \frac \pi 2]$ to derive the estimate 
in the lemma.
\end{proof}

\begin{lemma}  \label{length growth}
With notation as above,  if $1 \leq k \leq k+j \leq n/2$ then 
\[ 1 \leq \frac{|I_{k+j}^n|}{|I_k^n|} \leq 1 + \frac \pi 2 \frac jk.\]
\end{lemma}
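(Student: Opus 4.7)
The plan is to read off the exact trigonometric formula for $|I^n_k|$ that was implicit in the proof of Lemma \ref{length estimate}. That proof showed
\[ |I^n_k| \;=\; 2\sin\!\Big(\frac{\pi k}{n}\Big)\,\sin\!\Big(\frac{\pi}{2n}\Big), \]
so the $\sin(\pi/(2n))$ factor cancels in the ratio and we are reduced to the single trigonometric inequality
\[ 1 \;\leq\; \frac{\sin(\pi(k+j)/n)}{\sin(\pi k/n)} \;\leq\; 1 + \frac{\pi}{2}\cdot\frac{j}{k}. \]
The hypothesis $k+j\leq n/2$ is exactly what places both arguments in $(0,\pi/2]$.

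For the lower bound I would simply invoke that $\sin$ is increasing on $(0,\pi/2]$.

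For the upper bound, I would set $\alpha = \pi k/n$ and $\beta = \pi j/n$ and expand with the angle-addition identity,
\[ \sin(\alpha+\beta) \;=\; \sin\alpha\cos\beta + \cos\alpha\sin\beta \;\leq\; \sin\alpha + \sin\beta, \]
using $\cos\beta,\cos\alpha\leq 1$. Dividing by $\sin\alpha$ gives $\sin(\alpha+\beta)/\sin\alpha \leq 1 + \sin\beta/\sin\alpha$, so it remains to show $\sin\beta/\sin\alpha \leq (\pi/2)(j/k)$. This follows from the two standard bounds already exploited in Lemma \ref{length estimate}: $\sin x \leq x$ applied in the numerator and $\sin x \geq (2/\pi)x$ applied in the denominator, both valid on $[0,\pi/2]$. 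Together these give $\sin\beta/\sin\alpha \leq \beta/((2/\pi)\alpha) = (\pi/2)(\beta/\alpha) = (\pi/2)(j/k)$, as required.

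There is really no obstacle here; it is a short trigonometric computation. The one subtle point worth flagging is that the bound is slightly better than what one gets by bounding $\sin(\alpha+\beta)$ crudely by $\alpha+\beta$: the extra step of splitting the sine via the angle-addition formula and using $\cos\beta\leq 1$ is what produces the clean constant $1$ in front of $\pi j/(2k)$, rather than $\pi/2$.
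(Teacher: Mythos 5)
Your proof is correct and essentially the same as the paper's: both reduce to the ratio $\sin(\pi(k+j)/n)/\sin(\pi k/n)$ via the product-to-sum identity, then bound the numerator by $\sin(\pi k/n) + \pi j/n$ and the denominator below by $(2/\pi)(\pi k/n)$. The only cosmetic difference is that you obtain $\sin(\alpha+\beta)\leq\sin\alpha+\sin\beta\leq\sin\alpha+\beta$ via the angle-addition formula, while the paper invokes the Lipschitz bound $(\sin x)'=\cos x\leq 1$ directly.
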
 

\begin{proof}
	The left hand inequality is just  Lemma  \ref{length monotone}.
Recall  $ J^n_k = [a,b] =   [\pi \frac {2k-1}{2n} , \pi \frac {2k+1}{2n}]$
and  $ J^n_{k+j} = [c,d] =   [\pi \frac {2k+2j-1}{2n} , \pi \frac {2k+2j+1}{2n}]$.
	Using (\ref{diff rule cos}),  we get
\begin{align*}
	\frac{|I_{k+j}|}{|I_k|}
	= \frac {\cos c-\cos d}{\cos a - \cos b} 
	= \frac { \sin\frac{c+d}2  \sin \frac {c-d}2 }
	  { \sin\frac{a+b}2  \sin \frac {a-b}2 }
	= \frac { \sin\frac{c+d}2   }
	 { \sin\frac{a+b}2  },
\end{align*}
	since  $(d-c)/2 = (b-a)/2$. 
	Thus since  $k/n \leq 1/2$, $(\sin x)' = \cos x \leq 1$
	and $\sin x \geq 2x/\pi$ on $[0, \pi/2]$,  we have
\begin{align*}
	\frac{|I_{k+j}|}{|I_k|}
	&\leq \frac { \sin  (\pi(k+j)/n)  } { \sin  \pi k/n  } 
	= \frac { \sin ( \pi k/n)  + \pi j /n  } { \sin  \pi k/n  } \\
	&= 1 +  \frac {  \pi j /n  } { \sin  \pi k/n  } 
	 \leq  1 +  \frac {  \pi j /n  } {  (2/\pi)   \pi k/n  } 
	= 1 +    \frac \pi 2 \frac  j k. \qedhere
\end{align*}
\end{proof} 

From this we can easily deduce that chains of $M$ adjacent   
nodal intervals all
have approximately the same size, at least if $n$ is large,  and if we stay  
away from the endpoints.  More precisely, we have the following result.

\begin{cor} \label{K chains} 
For any $\eta>0$ and $M\in \naturals$, 
there is a $K \in \naturals $ so that 
	if $ K \leq k < k+j < n/2$, then $1 \leq |I_{k+j}^n|/|I_{k}^n| < 1+\eta$
whenever $0\leq  j < M$.
\end{cor}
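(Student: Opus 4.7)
The plan is to reduce the corollary directly to Lemma \ref{length growth}, which already gives the two-sided bound
\[ 1 \leq \frac{|I_{k+j}^n|}{|I_k^n|} \leq 1 + \frac{\pi}{2}\cdot \frac{j}{k} \]
whenever $1 \leq k < k+j \leq n/2$. The left-hand inequality in the corollary is exactly the left-hand inequality in the lemma, so no work is needed there. For the right-hand inequality, I only need to control the factor $\pi j/(2k)$ uniformly in $j \in \{0, 1, \dots, M-1\}$, and this is a one-line choice of $K$.

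Concretely, given $\eta > 0$ and $M \in \naturals$, I would set
\[ K = \left\lceil \frac{\pi M}{2 \eta} \right\rceil + 1. \]
Then for any $k \geq K$ and any $0 \leq j < M$, we have $j/k \leq (M-1)/K < 2\eta/\pi$, so
\[ 1 + \frac{\pi}{2}\cdot \frac{j}{k} < 1 + \eta. \]
Combined with Lemma \ref{length growth}, this gives the claimed bound $1 \leq |I_{k+j}^n|/|I_k^n| < 1+\eta$ whenever $K \leq k < k+j < n/2$ and $j < M$.

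There is essentially no obstacle here; the corollary is just a convenient restatement of Lemma \ref{length growth} saying that fixed-length chains of adjacent nodal intervals become nearly of equal length once we sit far enough from the endpoint $\pm 1$ (equivalently, far enough from $k = 0$ in index). The only thing to check is that the hypothesis $k+j \leq n/2$ in the lemma is implied by the hypothesis $k+j < n/2$ in the corollary, which is immediate.
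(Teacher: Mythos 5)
Your proposal is correct and is exactly the deduction the paper has in mind: the paper states the corollary immediately after Lemma \ref{length growth} with the remark that it can be ``easily deduced'' from that lemma, and gives no separate proof. Your explicit choice of $K$ is a valid way to make that deduction precise.
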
 

If nodal intervals were all exactly the same size, then the distance between 
$I^n_k$ and $I^n_{k+j}$ would be exactly $(j-1) |I^n_k|= (j-1)|I^n_{j+k}|$. 
Because the nodal intervals vary in size,  this is not true, but 
we do have the following similar estimate.

\begin{lemma} \label{dist lower bound}
Suppose $1 \leq k < k+j < n/2$. Then 
	\[ \dist(I^n_k, I^n_{k+j}) \geq  
	  \frac { 2 (j-1)(2k+j) }{ n^2 }    \geq 
	  \frac 4{\pi^2}   (j-1)(1+\frac j{2k})   \cdot |I^n_k| . \] 
\end{lemma}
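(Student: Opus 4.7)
The plan is to compute the distance $\dist(I_k^n, I_{k+j}^n)$ explicitly via the cosine difference identity already used in the proof of Lemma \ref{length estimate}, and then apply the same elementary sine estimate as in the surrounding lemmas.

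First I would identify the two relevant endpoints: since $k+j < n/2$, both nodal intervals lie in $[-1,0]$ with $I_k^n$ to the left of $I_{k+j}^n$, so the left endpoint of $I_{k+j}^n$ is $-\cos(\pi(2k+2j-1)/(2n))$ and the right endpoint of $I_k^n$ is $-\cos(\pi(2k+1)/(2n))$. The distance is therefore
\[ \dist(I_k^n, I_{k+j}^n) = \cos\bigl(\pi(2k+1)/(2n)\bigr) - \cos\bigl(\pi(2k+2j-1)/(2n)\bigr). \]
Applying (\ref{diff rule cos}) with $\alpha = \pi(2k+1)/(2n)$ and $\beta = \pi(2k+2j-1)/(2n)$, so that $(\alpha+\beta)/2 = \pi(2k+j)/(2n)$ and $(\beta-\alpha)/2 = \pi(j-1)/(2n)$, gives
\[ \dist(I_k^n, I_{k+j}^n) = 2\sin\bigl(\pi(2k+j)/(2n)\bigr)\,\sin\bigl(\pi(j-1)/(2n)\bigr). \]

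Next, since $k+j < n/2$ both arguments lie in $[0,\pi/2]$, so the inequality $\sin x \geq (2/\pi)x$ used repeatedly in Section \ref{length sec} yields
\[ \dist(I_k^n, I_{k+j}^n) \geq 2 \cdot \frac{2k+j}{n} \cdot \frac{j-1}{n} = \frac{2(j-1)(2k+j)}{n^2},\]
which is the first inequality. For the second, the upper bound $|I_k^n| \leq k\pi^2/n^2$ from Lemma \ref{length estimate} rearranges as $1/n^2 \geq |I_k^n|/(k\pi^2)$, so
\[ \frac{2(j-1)(2k+j)}{n^2} \geq \frac{2(j-1)(2k+j)}{k\pi^2}\,|I_k^n| = \frac{4}{\pi^2}(j-1)\Bigl(1+\frac{j}{2k}\Bigr)\,|I_k^n|,\]
completing the chain of inequalities.

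There is no real obstacle here: the argument is a direct computation using the same two tools (the cosine difference identity and the Jordan-type bound $\sin x \geq (2/\pi)x$ on $[0,\pi/2]$) that drive the other lemmas in this section, combined with the already established estimate for $|I_k^n|$. The only point requiring minor care is keeping track of orientations, i.e., verifying that the relevant endpoints of the two intervals are the ones written above; this is immediate from $I_k^n = -\cos(J_k^n)$ and the monotonicity of $-\cos$ on $[0,\pi]$.
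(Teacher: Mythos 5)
Your proof is correct and takes essentially the same route as the paper: both use the endpoints of $J_k^n$ and $J_{k+j}^n$, the cosine difference identity (\ref{diff rule cos}), the bound $\sin x \geq (2/\pi)x$ on $[0,\pi/2]$ applied to both sine factors, and then the upper bound $|I_k^n| \leq \pi^2 k/n^2$ from Lemma \ref{length estimate} to rewrite the result in terms of $|I_k^n|$. The only cosmetic difference is that you simplify $(\alpha+\beta)/2$ and $(\beta-\alpha)/2$ before applying the sine bound, whereas the paper carries $c$ and $d$ symbolically and expands $d^2-c^2$ afterward; the algebra is identical.
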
 

\begin{proof}
Let $c = \pi \frac {2k+1}{2n}$ be the right endpoint of $J^n_k$ and
$d=\pi \frac{2k+2j-1}{2n}$ the left endpoint of 
$J^n_{k+j}$. Then  $-\cos(c)$ and $-\cos(d)$ are the right and left
endpoints of $I^n_k$ and $I^n_{k+j}$ respectively, so 
\[ \dist(I^n_k, I^n_{k+j}) = \cos c- \cos d.\]
We can estimate this using the trigonometric identity 
\begin{align*}
	\cos c - \cos d 
	=&  -2 \sin\frac {c+d}{2} \sin \frac{c-d}2  
	 =  2 \sin\frac {c+d}{2} \sin \frac{d-c}2.  
\end{align*} 
Recall that on $[0, \pi/2]$ we have $\sin x \geq \frac 2 \pi x$.
If $0 < c < d \leq  \pi/2$ then $(c+d)/2 \leq \pi/2$ as well, so  
for $j\geq 1$ we get
\begin{align*}
	\cos c - \cos d 
	&\geq   2 \cdot \frac 2 \pi   \frac  {c+d}{2} \cdot \frac 2 \pi   \frac{d-c}2  
	 =    \frac 2{ \pi^2 } (d^2-c^2)   \\
	&=      \frac 12 \cdot (\frac{2k+2j-1}{n})^2-(\frac {2k+1}{n})^2   \\
	&=      \frac 12 \cdot \frac{(2k+2j-1)^2 -(2k+1)^2}{n^2}   \\
	&=      \frac 12 \cdot \frac{(2j-2)(4k+2j)}{n^2}   
	 =    2       \frac{(j-1)(2k+j)}{n^2}  .
\end{align*} 
By Lemma \ref{length estimate},  $|I^n_k| \leq  \pi^2 \frac k{n^2}$, so 
\begin{align} \label{dist lower est}
 \dist(I^n_k, I^n_{k+j})
&\geq  2 \frac{(j-1)(2k+j)}{n^2} \cdot   \frac {|I^n_k|}{ \pi^2 k/n^2}\\ 
&\geq  \frac { 4  (j-1)(2k+j) }{ 2 \pi^2 k}   \cdot |I^n_k|.  \qedhere
\end{align}
\end{proof} 

\begin{cor} \label{ratio upper bound}
	Suppose  $1 \leq k , k+j \leq n$ and  $j\ne 0$. Then 
	\[ \frac{|I_{k}^n|} {\dist(I^n_k, I^n_{k+j})} 
	\leq  \frac{16}{|j|-1}.\]
\end{cor}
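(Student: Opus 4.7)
The plan is to derive closed-form trigonometric expressions for $|I^n_k|$ and $\dist(I^n_k,I^n_{k+j})$ and then exploit an algebraic cancellation by \emph{adding} $|I^n_k|+|I^n_{k+j}|$ before dividing. This makes the resulting ratio independent of $k$, handling uniformly the cases where the two nodal intervals lie on the same side of $0$ or straddle it.

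First we may assume $|j|\geq 2$, since for $|j|=1$ the intervals are adjacent, so $\dist=0$ and the inequality is vacuous (reading $16/0$ as $+\infty$). Following the computation in the proof of Lemma~\ref{length estimate} gives
\[ |I^n_k| = 2\sin(\pi k/n)\sin(\pi/(2n)),\]
and following the proof of Lemma~\ref{dist lower bound} (after swapping $k$ and $k+j$ when $j<0$, so the relevant sine factor is positive) gives
\[ \dist(I^n_k,I^n_{k+j}) = 2\sin(\pi(2k+j)/(2n))\sin(\pi(|j|-1)/(2n)).\]
The sum-to-product identity $\sin A + \sin B = 2\sin\frac{A+B}{2}\cos\frac{A-B}{2}$ applied to $A=\pi k/n$ and $B=\pi(k+j)/n$ produces the same factor $\sin(\pi(2k+j)/(2n))$ that appears in $\dist$, so it cancels:
\[ \frac{|I^n_k|+|I^n_{k+j}|}{\dist(I^n_k,I^n_{k+j})} = \frac{2\cos(\pi|j|/(2n))\sin(\pi/(2n))}{\sin(\pi(|j|-1)/(2n))},\]
with no $k$ on the right-hand side.

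To finish, I would bound this using $\cos\leq 1$, $\sin x\leq x$, and $\sin x\geq (2/\pi)x$ on $[0,\pi/2]$; the last of these applies since $|j|\leq n-2$ forces $\pi(|j|-1)/(2n)<\pi/2$. A routine computation then yields $(|I^n_k|+|I^n_{k+j}|)/\dist \leq \pi/(|j|-1)$. Since $|I^n_k|$ is a non-negative summand of the numerator on the left and $\pi<16$, the desired bound $|I^n_k|/\dist\leq 16/(|j|-1)$ follows.

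The only real temptation to avoid is trying to extend Lemma~\ref{dist lower bound} directly via symmetry in order to cover the mixed placement case where $I^n_k$ and $I^n_{k+j}$ lie on opposite sides of $0$; this quickly devolves into several sub-cases with slightly different constants. The symmetric "add-then-divide" trick eliminates $k$ from the ratio in a single stroke and handles every configuration at once. (Alternatively, one could telescope $\dist = \sum_{i=k+1}^{k+j-1}|I^n_i|$ for $j\geq 2$ and apply Lemma~\ref{length growth}, but keeping track of intervals on both sides of the midpoint $n/2$ is noticeably messier.)
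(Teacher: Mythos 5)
Your proof is correct, and it takes a genuinely different and cleaner route than the paper's. The paper's argument builds on Lemma \ref{dist lower bound}, which only gives a usable lower bound for $\dist(I^n_k,I^n_{k+j})$ when both intervals lie to the same side of the origin, and then handles the remaining configurations by a three-way case analysis (same side with $I^n_k$ farther out so the intervening intervals are at least as long; same side with $I^n_k$ closer, using the inequality (\ref{dist lower est}) plus Lemma \ref{length estimate}; opposite sides, via a detour through the symmetric interval $I^n_{n-k}$). Your observation is that the awkward $k$-dependent factor $\sin(\pi(2k+j)/(2n))$ appears both in the exact distance formula and, via sum-to-product, in $|I^n_k|+|I^n_{k+j}|$, so it cancels:
\[
\frac{|I^n_k|+|I^n_{k+j}|}{\dist(I^n_k,I^n_{k+j})}
=\frac{2\sin(\pi/(2n))\cos(\pi|j|/(2n))}{\sin(\pi(|j|-1)/(2n))},
\]
which is manifestly $k$-independent and handles all placements of the two intervals relative to the origin in one stroke. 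Bounding with $\cos\le 1$, $\sin x\le x$, and $\sin x\ge(2/\pi)x$ on $[0,\pi/2]$ (valid since $1\le k,k+j\le n-1$ forces $|j|-1\le n-3$, so the denominator's argument is below $\pi/2$) yields $\pi/(|j|-1)$, and dropping the nonnegative summand $|I^n_{k+j}|$ gives $|I^n_k|/\dist\le\pi/(|j|-1)<16/(|j|-1)$. Your approach thus not only avoids the case analysis but also replaces the paper's constant $16$ by $\pi$, landing closer to the value $2$ that the paper reports as the numerically observed sharp constant. The only hypotheses worth flagging are the positivity of the two sine factors, and you handled those correctly: $0<2k+j<2n$ and $2\le|j|\le n-2$ both follow from $1\le k,k+j\le n-1$.
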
 

\begin{proof} 
Note that if $I^n_k$ is farther  from the origin than $I^n_{j+k}$ is 
(or is equidistant), 
then all $|j|-1$ nodal intervals between them have length 
at least $|I_k^n|$ and therefore 
\[ \dist(I^n_k, I^n_{j+k}) \geq (|j|-1)|I_k^n|,\]
which is stronger than the inequality in the lemma. 

Otherwise, $I^n_k$ is  strictly closer to the origin than $I^n_{j+k}$. 
First suppose they are on the same side of the origin. 
By symmetry, we may assume they are both to the left of the origin, so  
$I^n_{k+j}$is to the left of $I_k^n$, i.e., $j <0$. In this case,
(\ref{dist lower est}) says 
\begin{align*}
	\frac{|I_{k}^n|} {\dist(I^n_k, I^n_{k+j})} 
&\leq
	\frac {   2 \pi^2 k  |I^n_{k}| } {  2(|j|-1)(2k+|j|)|I^n_{j+k}| }. 
\end{align*} 
	Lemma \ref{length estimate} then gives 
\begin{align*}
	\frac{|I_{k}^n|} {\dist(I^n_k, I^n_{k+j})} 
&\leq 
	\frac {   2 \pi^2 k  (1+ \pi|j|/2k)  } {  2(|j|-1)(2k+|j|)} 
= 
\frac {     \pi^2   (k+ (\pi/2)|j|)  } {  (|j|-1)(2k+|j|)} \\
&\leq 
	\frac {    \pi^2   ((\pi/2)k+ (\pi/2)|j|)  } {  (|j|-1)(k+|j|)}  \\
&= 
\frac {    \pi^3  } {  2 (|j|-1) }
 < 
\frac {    16 } {   |j|-1 }, 
\end{align*} 
as claimed in the lemma.

Finally, we must consider the case when  $I^n_k$ is  strictly 
closer to the origin than $I^n_{j+k}$, but it is  on 
the opposite side of the origin. Then  $I^n_{n-k}$ is 
between $I^n_k$ and $I^n_{k+j}$, has the same size as $I^n_k$, 
and  every  interval between $I^n_k$ and $I^n_{n-k}$ is at least 
this long.  There are $n-2k$ such intervals, including 
$I_{n-k}^n$ but not $I^n_k$ (the picture for $n$ even and 
$n$ odd is slightly different but gives the same number in both cases).
By symmetry we may assume $k \leq n/2$ and $j \geq 2n-k$.  Thus 
\[ \dist(I^n_k, I^n_{k+j}) \geq (n - 2k) |I^n_k| + \dist(I^n_{n-k}, I^n_{j}).\]
Now the previous case applies to the distance between 
$I^n_{n-k}$ and $I_{k+j}^n$, and  since $|I^n_k| = |I^n_{n-k}|$, we get
\begin{align*} 
	\dist(I^n_k, I^n_{k+j})
&\geq(n - 2k) |I^n_k| + \frac 1{16}   (j-(n-2k)-1) |I^n_{n-k}| \\
&\geq    \frac 1{16} [(n - 2k) |I^n_k| + (j-(n-2k) -1)  |I^n_{k}| ] \\
	&=    \frac 1{16}  (j-1) |I^n_k| . \qedhere 
\end{align*}
\end{proof} 

Numerical experiments suggest the corollary holds with the estimate $2/(|j|-1)$, 
although the particular value is not important for the proof of Theorem 
\ref{weier++}; any estimate of the form $O(1/(|j|-1))$ would work.

\section{The area of a Chebyshev node} \label{area sec} 

In this section, we estimate the integral of a Chebyshev polynomial  $T_n$
over a nodal interval $I^n_k$, and show the result is approximately $(2/\pi) |I^n_k|$.

\begin{lemma} \label{project lengths}  
	Suppose $J  =[a,d] \subset [0, \pi]$ has length $t =d-a$ 
	and $J' = [b,c]  \subset J$ is 
	concentric with length $s=c-b$. Then  
	\[ \frac {|\cos(J')|}{|\cos(J)|} \geq \frac st = 
	\frac {|J'|}{|J|}.\]
\end{lemma}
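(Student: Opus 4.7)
The plan is to reduce everything to the sum–to–product identity for cosine and then exploit the concavity of $\sin$ on $[0,\pi/2]$. Since $J=[a,d]\subset[0,\pi]$, cosine is monotonically decreasing on $J$, so the images $\cos(J)$ and $\cos(J')$ are genuine intervals whose lengths are $\cos(a)-\cos(d)$ and $\cos(b)-\cos(c)$ respectively, and I can work with these signed quantities without worrying about orientation.

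First I would apply the identity (\ref{diff rule cos}) to both numerator and denominator:
\[
\cos(a)-\cos(d) = 2\sin\!\Big(\tfrac{a+d}{2}\Big)\sin\!\Big(\tfrac{d-a}{2}\Big),\qquad
\cos(b)-\cos(c) = 2\sin\!\Big(\tfrac{b+c}{2}\Big)\sin\!\Big(\tfrac{c-b}{2}\Big).
\]
The hypothesis that $J'$ is concentric in $J$ means $\tfrac{a+d}{2}=\tfrac{b+c}{2}$, so the first factors cancel (both are positive because this common midpoint lies in $(0,\pi)$ as soon as $J$ has positive length), leaving
\[
\frac{|\cos(J')|}{|\cos(J)|} \;=\; \frac{\sin(s/2)}{\sin(t/2)}.
\]

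The inequality to prove therefore reduces to $\dfrac{\sin(s/2)}{\sin(t/2)}\ge \dfrac{s}{t}$, which after rearrangement is the statement
\[
\frac{\sin(s/2)}{s/2} \;\ge\; \frac{\sin(t/2)}{t/2}.
\]
Because $0\le s\le t\le \pi$, both $s/2$ and $t/2$ lie in $[0,\pi/2]$ with $s/2\le t/2$, so this is precisely the fact that the sinc function $x\mapsto \sin(x)/x$ is monotonically decreasing on $(0,\pi/2]$. That monotonicity is a standard one-variable calculus fact (the derivative $(x\cos x - \sin x)/x^2$ is nonpositive on $[0,\pi/2]$ because its numerator vanishes at $0$ and has derivative $-x\sin x\le 0$), so the inequality follows.

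There is no real obstacle here: the only thing to watch is that the concentricity hypothesis is exactly what is needed to cancel the midpoint sine factor, and that $J\subset[0,\pi]$ ensures both $\sin$ factors are nonnegative so no absolute-value bookkeeping is required. The degenerate cases $s=0$ or $t=s$ give equality, and if $t=0$ the statement is vacuous.
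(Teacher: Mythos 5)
Your proposal is correct and follows exactly the same route as the paper's proof: apply the sum-to-product identity, cancel the common midpoint sine factor using concentricity, and reduce to the monotonicity of $\sin(x)/x$. The only cosmetic difference is that you spell out the derivative computation for the monotonicity of $\sin(x)/x$, which the paper leaves to the reader.
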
 

\begin{proof} 
The intervals being concentric means that 
$(a+d)/2 = (b+c)/2$. The difference formula for cosine implies 
\begin{align} \label{sin/sin}
 \frac {|\cos(J')|}{|\cos(J)|} 
 &= 
 \frac {\cos b  -\cos c}{\cos a - \cos d}   
 = 
 \frac {\sin  \frac{c+b}2  \sin \frac {b-c}2}{\sin \frac{a+d}2  \sin \frac {a-d}2} 
= 
 \frac { \sin \frac {c-b}2}{ \sin \frac {d-a}2}  
 = 
 \frac { \sin s/2}{ \sin t/2 }  ,
\end{align} 
so the claim is equivalent to whether  for $0 \leq s \leq t \leq \pi$ we have 
\[ \frac { \sin s/2}{ \sin t/2 } \geq \frac st.\]
However, this is true because $ \sin(x)/x$ is a decreasing function on $[0, \pi]$, 
 as can be checked by differentiation.
\end{proof} 

\begin{lemma} \label{int lower bound}
	$\int_{I^n_k} |T_n| \geq  \frac 2 \pi |I^n_k|$.  
\end{lemma}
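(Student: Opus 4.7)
The plan is to reduce the bound $\int_{I^n_k}|T_n|\ge \tfrac 2\pi|I^n_k|$ to the projection inequality in Lemma \ref{project lengths} by using Cavalieri's principle. Parametrize $I^n_k$ by $x=-\cos\theta$ with $\theta\in J^n_k=[\pi(2k-1)/(2n),\pi(2k+1)/(2n)]$, and recall that $T_n(-\cos\theta)=(-1)^n\cos(n\theta)$, so $|T_n(x)|$ on $I^n_k$ pushes forward $|\cos(n\theta)|$ on $J^n_k$. The midpoint of $J^n_k$ is $\pi k/n$, and writing $s=\theta-\pi k/n$ gives $|\cos(n\theta)|=\cos(ns)$ for $s\in[-\pi/(2n),\pi/(2n)]$. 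This function is even in $s$ and monotone on each half of $J^n_k$, so for every $c\in[0,1]$ the super-level set $\{\theta\in J^n_k:|\cos(n\theta)|\ge c\}$ is an interval concentric with $J^n_k$.

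Next I would apply the layer-cake representation
\[ \int_{I^n_k}|T_n(x)|\,dx = \int_0^1 |\{x\in I^n_k:|T_n(x)|\ge c\}|\,dc. \]
For each $c$, the set in question equals $-\cos(J_c')$ where $J_c'\subset J^n_k$ is the concentric sub-interval identified above, so Lemma \ref{project lengths} yields
\[ |\{x\in I^n_k:|T_n(x)|\ge c\}|\ge \frac{|J_c'|}{|J^n_k|}\,|I^n_k| = \frac{|I^n_k|}{|J^n_k|}\,|\{\theta\in J^n_k:|\cos(n\theta)|\ge c\}|. \]
Integrating in $c$ and applying the layer-cake formula once more (this time on $J^n_k$) gives
\[ \int_{I^n_k}|T_n|\,dx \ge \frac{|I^n_k|}{|J^n_k|} \int_{J^n_k}|\cos(n\theta)|\,d\theta. \]

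The proof finishes with the two elementary computations $|J^n_k|=\pi/n$ and
\[ \int_{J^n_k}|\cos(n\theta)|\,d\theta = \int_{-\pi/(2n)}^{\pi/(2n)}\cos(ns)\,ds = \frac{2}{n}, \]
whose ratio is exactly $2/\pi$. The only non-routine point is the concentricity observation; once that is in place, Lemma \ref{project lengths} does the geometric work and the rest is a short calculation. I would expect the verification of concentricity (i.e., that the super-level sets of $|\cos(n\theta)|$ on $J^n_k$ are symmetric about the midpoint $\pi k/n$) to be the only conceptual step, and it is immediate from the location of the Chebyshev roots and extrema in the $\theta$-parametrization.
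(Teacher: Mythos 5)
Your argument is essentially identical to the paper's: both use the layer-cake (Cavalieri) formula together with Lemma \ref{project lengths}, after observing that the super-level sets of $|\cos(n\theta)|$ on $J^n_k$ are concentric subintervals. Your verification of concentricity via the substitution $s=\theta-\pi k/n$ is slightly more explicit than the paper's, but the structure of the proof is the same, and it is correct.
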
 

\begin{proof}
We will use the standard formula 
	(e.g., Proposition 6.24 of \cite{MR1681462})
\begin{align} \label{int formula} 
 \int_I f(x) dx =  \int_0^\infty   |\{ x \in I: f(x) > t\}| dt
\end{align}
that is valid for continuous, non-negative functions.

Recall that for $k=1, \dots, n-1$, we set
$J^n_k = [ \pi \frac {2k-1}{2n}, \pi \frac {2k+1}{2n}]$,
and we defined the nodal intervals $I^n_k = \cos(J^n_k)$.
By the definition of the Chebyshev  polynomials, 
if $x = \cos(y)$, then 
$T_n(x) = \cos(ny)$.
Thus the interval $\{x \in I^n_k:T_n(x) > t\}$
is the image under  cosine of the 
interval $J'=\{y \in J^n_k: \cos(ny)  > t\}$. 
Please also note that it is without loss of generality to assume that $T_n$ is positive on $I^n_k$.

We now apply Lemma \ref{project lengths} with 
$J= J^n_k$, $\cos(J) = I^n_k$, $J' = \{y \in J^n_k: 
|\cos(ny)| > t\}$ and $ \cos(J') = 
\{x \in I^n_k: |T_n(x)| > t\}$. 
The function $|\cos(ny)|$ takes its maximum on $J^n_k$ at 
the midpoint of $J^n_k$, and is symmetric with respect
to this midpoint. Thus $J'$ is concentric with $J$. 
Since $|J^n_k| = \pi/n$,  by Lemma \ref{project lengths} we have 
\[   \frac  {|\{x \in I^n_k:T_n(x) > t\}|} {|I^n_k|} 
	\geq  \frac {|\{y \in J^n_k:\cos(ny) > t\}|}{|J^n_k |} 
	=   \frac n \pi  |\{y \in J:\cos(ny) > t\}| .\]
Using (\ref{int formula}) twice gives
\begin{align*} 
 \frac 1{|I^n_k|}    \int_{I^n_k} T_n(x) dx
   &= \frac 1{|I^n_k|}   \int_0^1  |\{ T_n(x) > t\}| dt \\
	&\geq   \frac n\pi \int_0^1  |\{ \cos(ny)  > t\}| dt \\
	&=   \frac n\pi \int_{J^n_k} \cos(ny)  dy  
   =  \frac n\pi  \frac 2 \pi  \cdot |J^n_k| 
   =   \frac n\pi  \frac 2 \pi  \cdot  \frac \pi n  
	=   \frac 2 \pi . \qedhere
   \end{align*} 
\end{proof} 

A result of Erd{\H o}s and Gr{\" u}nwald 
\cite{MR7}\footnote{This 1939  paper was  one of the first 
listed in {\it Math. Reviews}: it has MR-number 7.} gives a nearby  upper bound. 

\begin{prop}
If $p$ is a polynomial with only real zeros and $a<b$ are roots 
of $p$ with $p>0$ on $(a,b)$ then 
$ \int_I p  dx \leq   \frac 23 |I| \max_I p$.
\end{prop}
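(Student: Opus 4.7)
Plan. I normalize the interval to $[-1,1]$ so that the claim reads $\int_{-1}^1 p\,dx\le\tfrac{4}{3}M$, where $M=\max_{[-1,1]}p$. Since $p$ has only real zeros, so does $p'$, and Rolle's theorem applied to the consecutive zeros $\pm 1$ shows that $p$ has exactly one critical point $c\in(-1,1)$, which is its unique maximum. Thus $p$ is strictly increasing from $0$ to $M$ on $[-1,c]$ and strictly decreasing from $M$ to $0$ on $[c,1]$. Writing $x_-(y)$ and $x_+(y)$ for the two inverse branches on these sub-intervals and $w(y)=x_+(y)-x_-(y)$ for the super-level width, a standard layer-cake computation gives
\[\int_{-1}^1 p(x)\,dx=\int_0^M w(y)\,dy.\]

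The strategy is to establish the pointwise level-set estimate
\[w(y)\le 2\sqrt{1-y/M}\qquad\text{for all }y\in[0,M],\]
from which the conclusion follows by direct integration:
\[\int_{-1}^1 p\,dx\le 2\int_0^M\sqrt{1-y/M}\,dy=2M\int_0^1\sqrt{1-s}\,ds=\tfrac{4}{3}M.\]
This estimate is sharp; for $p(x)=M(1-x^2)$ one has $w(y)=2\sqrt{1-y/M}$ identically, which is the extremal case.

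To prove the level-set estimate I would use the real-zero hypothesis to write $p(x)=(1-x^2)r(x)$ with $r>0$ on $[-1,1]$ and all real zeros of $r$ outside $(-1,1)$. Then $\log p$ is concave on $(-1,1)$, since $(\log|p|)''=-\sum_i 1/(x-z_i)^2\le 0$, and by Newton's inequality $(n-1)(p')^2\ge n\,p\,p''$ the $n$-th root $p^{1/n}$ is in fact concave on $[-1,1]$, with infinite slopes at $\pm 1$. This structure prevents the super-level sets from being too wide relative to $M$. A natural induction on the number of real roots of $r$ should then work: the base case $r\equiv$ constant gives the estimate with equality, and moving a single real root $\zeta$ of $r$ continuously from $\pm\infty$ inward toward $\pm 1$ should strictly shrink the ratio $w(y)/\sqrt{1-y/M}$ at each level, so the supremum over admissible $p$ is attained in the limit $r\equiv$ constant.

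The main obstacle is the level-set estimate itself. Log-concavity alone is insufficient — it admits plateau-shaped extremals that violate any $c<1$ version of the $2/3$ bound — so the real-zero structure of $r$ must be exploited essentially, not merely through log-concavity. Quantifying the monotonic effect on $w(y)$ of moving an external root of $r$ is the delicate step: one must simultaneously track how both $M$ and $w(y)$ respond to the deformation and verify that $w(y)/\sqrt{1-y/M}$ is monotone along it.
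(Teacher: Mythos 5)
The paper does not prove this proposition --- it is quoted from Erd\H{o}s and Gr\"unwald (1939) with a citation, so there is no internal proof for me to compare against. Judged on its own terms, your attempt is correct in its framework but has a genuine gap at its heart. The reduction is sound: normalizing to $I=[-1,1]$, the unimodality of $p$ on the nodal interval follows from Rolle's theorem and the realness of the zeros of $p'$, the layer-cake identity $\int_{-1}^1 p\,dx=\int_0^M w(y)\,dy$ is correct, and the arithmetic $2\int_0^M\sqrt{1-y/M}\,dy=\tfrac{4}{3}M$ together with the equality case $p=M(1-x^2)$ is right. Your observation that $p^{1/n}$ is concave via Laguerre's inequality $(n-1)(p')^2\ge n\,p\,p''$ is also correct.

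But the entire content of the proposition has been pushed into the level-set estimate $w(y)\le 2\sqrt{1-y/M}$, which you do not prove. You candidly flag this yourself: you note that log-concavity (and indeed concavity of $p^{1/n}$) cannot suffice, since these conditions only yield \emph{lower} bounds on the super-level widths of the form $w(y)\ge 2\bigl(1-(y/M)^{1/n}\bigr)$, whereas what is needed is an upper bound; plateau-shaped log-concave profiles make the ratio $\int p/(|I|\max p)$ arbitrarily close to $1$. The proposed remedy --- an induction ``moving a single external root inward'' while tracking how $M$ and $w(y)$ both change and showing monotonicity of $w(y)/\sqrt{1-y/M}$ --- is stated as a plan, not carried out, and you acknowledge it as ``the delicate step.'' As written, then, this is a correct reduction of the statement to a nontrivial pointwise inequality that remains unproved. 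To complete the argument you would need either to establish that level-set inequality (the deformation idea is plausible but the monotonicity claim is nontrivial and would require real work, since the max value $M$ and the location of the maximum both shift as a root is moved), or to abandon the layer-cake route in favor of the original Erd\H{o}s--Gr\"unwald argument, which bounds the integral directly rather than through a pointwise level-set estimate.
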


Combining this with the previous lemma gives 
\[  .6366  \approx  \frac 2 \pi 
\leq  \frac{\int_{I^k_n} |T_n(x)|dx}{|I^n_k|}  
\leq \frac 23  \approx .6666 .\]
In our situation, we can improve this further.
A direct calculation shows 
the integral actually converges to the lower bound as $n$ increases
(i.e., Chebyshev nodes ``look like'' nodes of sine).

\begin{lemma} \label{area bound} 
With notation as above, 
\[  \frac 2\pi \leq \frac{ \int_{I^n_k} |T_n|}{|I^n_k|} \leq \frac 2 \pi + 
	\frac{ \pi}{  6 n^2} .\]
\end{lemma}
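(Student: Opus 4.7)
The plan is to mimic the proof of Lemma~\ref{int lower bound}, replacing the one-sided inequality of Lemma~\ref{project lengths} by a quantitative two-sided version. As in that earlier proof, the substitution $x=-\cos y$ and the layer-cake formula (\ref{int formula}) give
\[ \frac{\int_{I^n_k}|T_n|}{|I^n_k|} \;=\; \int_0^1 \frac{|\cos(J'_\tau)|}{|\cos(J^n_k)|}\,d\tau,\]
where $J'_\tau=\{y\in J^n_k:\,|\cos(ny)|>\tau\}$. Because the midpoint of $J^n_k$ is $y_0=\pi k/n$ and $\sin(ny_0)=0$, one has $|\cos(ny)|=|\cos(n(y-y_0))|$, which is even about $y_0$; hence $J'_\tau$ is concentric with $J^n_k$, and identity (\ref{sin/sin}) applies with $s=|J'_\tau|$ and $\ell:=|J^n_k|=\pi/n$.

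The core calculus step is to prove a companion upper bound to Lemma~\ref{project lengths}: whenever $0\le s\le \ell=\pi/n$,
\[ \frac{\sin(s/2)}{\sin(\ell/2)} \;\le\; \frac{s}{\ell}\left(1+\frac{\pi^2}{12n^2}\right).\]
I would derive this from the trivial $\sin(s/2)\le s/2$ combined with the standard Taylor estimate $\sin x\ge x-x^3/6$, which gives $\sin(\ell/2)\ge (\ell/2)(1-\ell^2/24)$, followed by the elementary inequality $1/(1-u)\le 1+2u$ for $u\in[0,\tfrac12]$ applied to $u=\ell^2/24=\pi^2/(24n^2)$ (this is at most $\tfrac12$ for every $n\ge 1$). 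Each ingredient is a standard one-line Taylor argument, so this step poses no real difficulty.

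Substituting this pointwise-in-$\tau$ bound into the layer-cake integral and using the elementary evaluation
\[ \int_0^1\frac{|J'_\tau|}{|J^n_k|}\,d\tau \;=\; \frac{1}{|J^n_k|}\int_{J^n_k}|\cos(ny)|\,dy \;=\; \frac{n}{\pi}\cdot\frac{2}{n} \;=\; \frac{2}{\pi}\]
then yields
\[ \frac{\int_{I^n_k}|T_n|}{|I^n_k|} \;\le\; \left(1+\frac{\pi^2}{12n^2}\right)\cdot\frac{2}{\pi} \;=\; \frac{2}{\pi}+\frac{\pi}{6n^2},\]
and the matching lower bound is already Lemma~\ref{int lower bound}. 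Since the only novelty is the quantitative refinement of Lemma~\ref{project lengths} and everything else reuses the bookkeeping from the earlier proof, I anticipate no serious obstacle.
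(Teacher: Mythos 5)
Your proposal is correct and follows essentially the same route as the paper: both start from the layer-cake identity and the $\sin(s/2)/\sin(t/2)$ formula from (\ref{sin/sin}), then apply $\sin x\le x$ in the numerator, $\sin x\ge x-x^3/6$ in the denominator, and $1/(1-u)\le 1+2u$ to produce the $\pi^2/(12n^2)$ correction factor. The only cosmetic difference is that you fix $\ell=\pi/n$ at the outset and package the estimate as a quantitative companion to Lemma~\ref{project lengths}, whereas the paper carries a supremum over $0<s<t<\pi/n$ and evaluates it at $t=\pi/n$; the underlying computation is identical.
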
 

\begin{proof}
Fix $0< r< 1$, and as above set  $J = J^n_k$ and $J' = \{ y \in  J^n_k : \cos(ny) > r\}$, $0<r<1$. 
Then $\cos(J) = I^n_k$ and  $\cos(J') = \{ x \in I^n_k: |T_n(x)|> r\}$.
Set $s=|J'|$ and $t=|J|$  and  note that $0 < s< t< \pi/n$. 
Using  (\ref{sin/sin}) we have 
\begin{align*} 
\frac{ |\{ x \in I^n_k: |T_n(x)|> r\}| }{|I^n_k|} 
	&=
\frac {|\cos(J')|} {|\cos(J)|} 
= 
\frac {|J'|}{|J|} \cdot  \frac {|J|}{|J'|} \cdot \frac{|\cos (J')|}{|\cos(J)|}  \\
&= 
\frac {|\{y \in J^n_k: |\cos ny|>r\}|} {|J^n_k|} \cdot \frac {t}{s} \cdot \frac{\sin s/2}{\sin t/2} \\
&\leq 
\frac {|\{y \in J^n_k: |\cos ny|>r\}|}{|J^n_k|} \cdot  \sup_{0< s< t< \pi/n}
        \frac{t\sin s/2}{s\sin t/2}.
\end{align*} 
We have already seen that 
	\[ \frac 1{|J^n_k|}  \int_0^1 |\{y \in J^n_k: |\cos ny|>r\}| dr
	= \frac 1{|J^n_k|}  \int_{J^n_k} \cos ny dy = \frac 2 \pi.\]
Using this, (\ref{int formula}),  and $ x-x^3/6  \leq \sin x \leq x$  on $[0, \pi/2]$, 
we can deduce that 
\begin{align*} 
\frac{ \int_{I^n_k} |T_n|}{|I^n_k|} 
&\leq
\frac{ 2}{\pi}  \cdot \sup_{0<s<t<\pi/2n} \frac{t \sin s}{s \sin t}  \\
&\leq \frac 2 \pi \cdot 
	\sup_{0< s <t< \pi/2n} \frac {t s}{ s (t-t^3/6 )}  \\
&\leq \frac 2 \pi \cdot 
	\sup_{0< s <t< \pi/2n} \frac {1 }{ 1-t^2/6 }.
\end{align*} 
This is maximized at $t = \pi/2n$. Thus  the  last line above is less than 
\[
  \frac 2 \pi \cdot \frac{1} { 1-(\pi/2n)^2/6 }  
  =\frac 2 \pi \cdot \frac{1} { 1-\pi^2/24n^2 }  
 \leq  \frac 2 \pi \cdot (1+ \frac {\pi^2 }{12 n^2}   )
= \frac 2 \pi  + \frac {\pi }{6 n^2},   
\]
where the inequality holds  because $ 1/(1-x) \leq 1+2x$ for $0\leq x \leq 1/2$, 
and since $\pi^2/(24 n^2) < 1/2$  for  $n \geq 1$.
\end{proof} 

Because  $T_n$  has opposite signs on adjacent nodal intervals, the integrals 
over adjacent intervals  mostly cancel, especially when the intervals are close 
in length. Lemma \ref{area bound} immediately implies the following estimate 
capturing this.

\begin{cor} \label{int -> 0}
If $I, J$ are adjacent nodal intervals with 
$ |J|\leq |I|\leq (1+\eta) |J|$, then 
\[ \bigg| \frac 1{|I|+|J|}  \int_{I\cup J} T_n(x) dx \bigg| 
\leq 
 \frac 2 \pi \cdot \frac{|I|-|J|}{|I|+|J|}  + \frac \pi{6 n^2}
\leq 
\frac  { \eta} \pi  + \frac \pi{6 n^2} <  \eta/3,   \]
 the last inequality holding for all $n   \geq 6/\sqrt{\eta}$.
\end{cor}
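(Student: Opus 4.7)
The plan is to exploit the fact that $T_n$ changes sign at each Chebyshev root, so on two adjacent nodal intervals $I,J$ the polynomial $T_n$ takes opposite signs. Without loss of generality, assume $T_n\geq 0$ on $I$ and $T_n\leq 0$ on $J$ (the other case is symmetric, and the absolute value on the left side of the claim makes the choice irrelevant). Then we can write
\[
\int_{I\cup J} T_n(x)\,dx \;=\; \int_I |T_n|\,dx \;-\; \int_J |T_n|\,dx,
\]
which reduces everything to the sharp two-sided bound from Lemma \ref{area bound}.

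Next I apply Lemma \ref{area bound} with the upper bound on $I$ and the lower bound on $J$ (choosing the signs in this order produces the largest possible value of the difference):
\[
\int_I |T_n| - \int_J |T_n| \;\leq\; \Bigl(\tfrac{2}{\pi}+\tfrac{\pi}{6n^2}\Bigr)|I| - \tfrac{2}{\pi}|J| \;=\; \tfrac{2}{\pi}(|I|-|J|) + \tfrac{\pi}{6n^2}|I|.
\]
The reverse inequality (with the other choice of signs, or equivalently after taking absolute values) is obtained analogously, yielding the same bound with $|I|-|J|$ still nonnegative by the hypothesis $|J|\leq|I|$. Dividing by $|I|+|J|$ and using the trivial bound $|I|/(|I|+|J|)\leq 1$ gives the first inequality in the statement.

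For the second inequality, the hypothesis $|I|\leq(1+\eta)|J|$ yields $|I|-|J|\leq\eta|J|$, so
\[
\frac{|I|-|J|}{|I|+|J|} \;\leq\; \frac{\eta|J|}{|I|+|J|} \;\leq\; \frac{\eta}{2},
\]
and multiplying by $2/\pi$ gives $\eta/\pi$. The final strict inequality $\eta/\pi+\pi/(6n^2)<\eta/3$ amounts to a numerical check: the condition $n\geq 6/\sqrt{\eta}$ makes $\pi/(6n^2)\leq \pi\eta/216$, and one verifies that $\pi/216<(\pi-3)/(3\pi)$ (since $\pi^2/72\approx 0.137<\pi-3\approx 0.142$), which is exactly the numerical inequality required.

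There is no real obstacle here; the entire argument is a one-line signed-integral cancellation plus Lemma \ref{area bound}. The only thing worth double-checking is the arithmetic in the last step, since the constant $6$ in $n\geq 6/\sqrt{\eta}$ is tuned fairly tightly to the constant $\pi/6$ appearing in Lemma \ref{area bound}. This is why I would perform the numerical verification carefully rather than using a cruder threshold.
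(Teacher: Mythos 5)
Your proof is correct and is exactly the argument the paper has in mind: the paper presents the corollary as an immediate consequence of Lemma \ref{area bound} via the sign cancellation on adjacent nodal intervals, and that is precisely what you carry out, including the careful numerical check at the end where the threshold $n\geq 6/\sqrt{\eta}$ yields $\pi/(6n^2)\leq\pi\eta/216$ and the tight inequality $\pi^2/72<\pi-3$ closes the argument.
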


The argument proving Lemma \ref{area bound} applies if $|T_n|$ is replaced by $h(|T_n|)$ for 
some increasing, continuous  function $h$ on $[0,1]$. For example, if $h(x) = x^2$, 
we deduce that 
\begin{align}  \label{T^2 int}
  \frac 12 \leq \frac 1 \pi \int_0^\pi \sin^2(t) dt  \leq \int_{I^n_k}   |T_n|^2(t) dt 
	\leq \frac 12 (1 + \frac {\pi^2}{12n^2}) = \frac 12 + \frac {\pi^2}{24 n^2}  .
\end{align} 
This estimate is utilized in \cite{weier_part2}.

\section{Perturbing the roots }  \label{perturb sec} 

In this section, we introduce notation used in the rest of the paper, 
by defining the subintervals $\{G^n_k\} \subset [-1,1]$ formed
by unions of adjacent nodal intervals, and the polynomials $T_n(x,y)$ 
created by moving the roots of $T_n(x)$ within these intervals.

For convenience, we will assume that $n$ is of the form $n=8m+1$ for 
some positive integer $m$. We do this because 
we are going to group the nodal intervals into groups of four to 
form intervals $G^n_k$, and we want  the origin to be the common endpoint of 
two of these intervals. 
Thus the number of nodal intervals (which is $n-1$), must   be a multiple of 
eight,  leading to the condition $n = 8m+1$. 

Fix  such an  $n$  and let 
$T_n(x)$  be the $n$th Chebyshev polynomial.  
Let $N = (n-1)/4$.
Define $G^n_m$ as the union of the four nodal intervals  
\[G_k^n =  I^n_{4k-3} \cup I^n_{4k-2}  \cup I^n_{4k-1} \cup I^n_{4k} 
=[ r^n_{4k-3}, r^n_{4k+1}], \quad k=1, \dots N.\]
Each $G_k^n$ has roots of $T_n$ as endpoints, and it  contains three
other roots, $r^n_{4k-2}$, $r^n_{4k-1}$, and  $r^n_{4k}$, 
in its interior. We will refer to these as the ``interior roots''  
of $G^n_k$, and label them as  ``left'', ``center'' and ``right'' respectively. 
Since $n$ is odd, $T_n$ is positive on 
the intervals $I^n_k$ with $k$ odd and is negative when $k$ is even. 
Thus on each $G_k^n$ the nodes are positive, negative, positive and 
negative moving left to right. 

For each $G_k^n$ we will perturb  the three interior
roots and leave the endpoint roots of every $G^n_k$  fixed.   
We describe the idea roughly here and more precisely in 
Section \ref{3-pt sec}. 
Given a value $ y \in [0, 1/2]$, moving the roots by a factor 
of $y$ will mean that the rightmost of the three interior 
roots is moved by distance $y \cdot |I^n_{4k-1}|$ to 
the right; if $ y<0$ then this root is moved distance 
$|y| \cdot |I^n_{4k-1}|$ to the left. We can describe both cases
at once by saying the root  is moved by the signed distance $ y|I^n_{4k-1}|$.
The middle and left interior roots  are moved by signed distances
$(-y-\delta) |I^n_{4k-1}|$ and 
$\delta |I^n_{4k-1}|$  respectively, where $\delta$ will be defined
precisely in Section \ref{3-pt sec}. 
Note that this causes the center of mass of the three roots to remained 
unchanged. 
In all the cases of interest,
$\delta$ has the same sign as $y$, $\delta$ is comparable to 
$|y|$, and  $\delta$ depends only on $y$ and the ratio 
$a=|I^n_{4k-2}|/ |I^n_{4k-1}|$, which will be close to $1$.
More concisely, $\delta = \delta(y,a )\simeq y$.
The exact value of $\delta$ is chosen so that the perturbation of the 
three roots simultaneously has the effect of  multiplying the polynomial
by a rational function $R$ so that $R(x) = 1+O(|x|^{-3})$  
as we move away from the perturbations. This decay will be 
verified in Section \ref{3-pt sec} 
and used in Sections \ref{interior sec} and \ref{exterior sec} to show that
the effect of distant perturbations is quite small compared to local ones. 

In order to specify small  perturbations of the Chebyshev roots 
in many different intervals at once, we introduce some notation. 
Let $Q^N_t= [-t,t]^n$ and let 
$\lVert(x_1, \dots , x_n)\rVert = \max_{1\leq k \leq n} |x_k|$ 
denote the supremum norm on $\reals^N$. 
For a vector  $y =(y_1, \dots, y_N)  \in Q_t^N$, let 
$\widetilde y_m = (0, \dots, y_m, \dots,0) \in 
Q_t^N$ be the vector  that equals $y_m$ in the $m$th coordinate 
and is zero in the other coordinates (i.e., the projection of 
$y$ onto the $m$th coordinate axis).
For $y \in Q^N_t$ and $x \in [-1,1]$, 
define $  T_n(x,y)$  to be the  polynomial obtained from 
$T_n(x) $ by  perturbing the zeros of $T_n$ in 
$G^n_k$ by a factor of $y_k$. 
When $y$ is the all zeros vector, we make no perturbations, so $T_n(x,0) = T_n(x)$.
We can give a formula for the 
perturbed polynomial, although it is a bit awkward: 
\[  T_n(x,y) = 2^{n-1} \prod_{k=1}^n (x - z^n_k),\]
where 
\[ 
z^n_k = 
\begin{cases}
r^n_k, \text{ if } k= 1 \mod 4  \quad (\text{endpoints})\\
 r^n_k +y_k |I^n_k|_, \text{ if } k= 0 \mod 4  \quad(\text{right interior}) \\
 r^n_k +\delta(y_k,|I^n_k|/|I^n_{k+1}|) |I^n_k|, \text{ if } k= 2 \mod 4 \quad (\text{left interior}) \\
 r^n_k -(y_k+\delta(y_k,|I^n_k|/|I^n_{k+1}|)) |I^n_k|, \text{ if } k= 3 \mod 4  
	                   \quad(\text{center interior}).
\end{cases} 
\] 
The perturbation is easier to describe in words than in this formula: 
each endpoint of every $G^n_k$ is left fixed ($k=1 \mod 4$); 
the rightmost interior root  ($k=0 \mod 4)$ is moved by 
$y_k$ times the length of the third component sub-interval $ I=I^n_{4k-1}$; the leftmost 
root ($k=2 \mod 4)$  is moved in the same direction by an amount $\delta |I|$, where 
$\delta$ depends on $y_k$ and the length ratio of the center two intervals; the center root
($k=3 \mod 4)$ moves in the opposite direction, and so that the center of mass of the three roots remains unchanged.  

Define $A^n_k(y)$ as the average of $T_n(x,y)$ over $G^n_k$, i.e., 
\begin{align}\label{defn I} 
\Gint^n_k(y) = \frac 1{|G^n_k|} \int_{G^n_k}  T_n(x,  y_k) dx ,
\end{align}
for $k=1, \dots , N = (n-1)/4$. 
The proofs of the  desired estimates for $\Gint^n_k$  will require that
the four nodal intervals making 
up $G^n_k$ to have nearly the same length, to within a fixed factor $1+\eta$.
By Corollary \ref{K chains} we know this holds if we  omit a finite number, $K$,  of
 nodal intervals near each of  $-1$ and $1$.
Thus we will study $\Gint^n_k$ only in the range $K <  k \leq N-K$
(depending on $\eta$, but not on $n$) of nodal intervals near each of  $-1$ and $1$.
The number $K$ depends on $\eta$, but not on $n$. 

Recall $y=(y_1, \dots, y_N)$ and $\widetilde y_k =y_k$ in the $k$th coordinate 
and is zero elsewhere. 
We define maps $f,g$  from 
$ Q^N_t$ into $\reals^N$  whose coordinate functions for $K <  k  \leq  N-K$ 
are given by 
\begin{align}\label{defn f} 
f_k(y) = \Gint^n_k(\widetilde y_k) 
\end{align} 
and 
\begin{align}\label{defn g} 
g_k(y) = \Gint^n_k(y). 
\end{align} 
Note that each $f_k$ considers the integral of different polynomials on 
different intervals  $G^n_k$ (the  polynomial using perturbations  only in $G^n_k$),
whereas $g_k$ is defined using the same polynomial  on every interval  $G^n_k$.

In the remaining dimensions ($k \leq K$ and $k > N-K$),
we simply  let $f_k$ and $g_k$ be the identity maps.
These $2K$  dimensions play no role in the proof, and we might just define 
$f$ and $g$ as maps from $Q^{N-2K}_t$ into $\reals^{N-2K}$. However, 
this complicates the notation, and the indices of the coordinates
of $f$ and $g$ would no longer match the indices of the intervals
$G^n_k$, causing further confusion and inconvenience.  Little is
lost if the reader simply thinks of (\ref{defn f}) and (\ref{defn g}) as 
holding for all $1\leq k \leq N$, but only verifies the  arguments 
in the following sections when $k$ is not too small or too large.

Each coordinate function $f_k$ of $f$ is real valued and 
only depends on $y_k$,  the $k$th coordinate of $y$. Thus we can think of these 
maps as sending intervals to intervals.
We will prove that each  coordinate function  $f_k$ is monotone in
$y_k$ and that it maps $I_t =[-t, t]$  to a strictly larger 
interval; see  Corollary  \ref{f is homeo}.
It follows that  $f$ is a homeomorphism of $Q^N_t$
to some cube  $ Q' \supset Q^N_t$. 
In particular, $Q^n_{t/2} \subset f(Q^n_t)$.
We want to show the same is true
for $g$, i.e., $ Q^N_{t/2} \subset g(Q^N_t)$.
This will imply that given a 
Lipschitz function $F$, we can find a perturbed Chebyshev polynomial with 
an anti-derivative $P_n$ that agrees with $F$ at the endpoints of every 
$\{G^n_k\}$. 
Since both $F$ and $P_n$ are Lipschitz, and the lengths of $G^n_k$ tend to 
zero uniformly with $n$, this will imply that $P_n$ converges uniformly to $F$. 
See Theorem \ref{Lip approx}.

The following result is a precise formulation of the idea that 
the integral of  the perturbed  polynomial $T_n(x,y)$ over the interval 
$G^n_k$ is dominated by the perturbation of the roots inside $G^n_k$, and that
the perturbations exterior to $G^n_k$ have a strictly smaller effect. 

\begin{thm} \label{delta estimate}
There is $t>0$ so that the following holds. 
If $n$ and $K$ are  large enough, and the maps $f,g$ 
	are defined as above  for $y \in Q^N_t$, 
then $Q^N_t \subset f_k(Q^N_t)$ and 
$\lVert f_k-g_k\rVert_{Q^N_t}  \leq t/2$ for $1\leq k \leq N$.
\end{thm}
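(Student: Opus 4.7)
The plan is to prove both claims using the two-tier structure of the perturbation: a strong local effect of the 3-point perturbation within a single $G^n_k$, and a fast decay of the same perturbation outside $G^n_k$. The local effect, to be quantified in Section \ref{3-pt sec}, is that moving the rightmost interior root of $G^n_k$ by $y_k \cdot |I^n_{4k-1}|$ (with the middle and left roots compensating so the center of mass of the three moved roots is preserved) changes $A^n_k$ by an amount whose sign matches $y_k$ and whose size is bounded below by a definite constant multiple of $|y_k|$. The non-local effect, also from Section \ref{3-pt sec}, is encoded by writing the perturbed polynomial as $T_n(x,y) = T_n(x) \prod_{m=1}^N R_m(x;y_m)$, where the rational factor $R_m(x;y_m)$ implementing the perturbation in $G^n_m$ satisfies $R_m(x;y_m) - 1 = O(|y_m| |G^n_m|^3 / \dist(x, G^n_m)^3)$ away from $G^n_m$ (this is why $\delta$ was chosen to cancel the $|x|^{-2}$ term).

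For the first claim $Q^N_t \subset f(Q^N_t)$, since each coordinate $f_k$ depends only on $y_k$, it suffices to show $f_k([-t,t]) \supset [-t,t]$ for each $K < k \leq N-K$ (the other coordinates are the identity). By Corollary \ref{K chains}, choosing $K$ large forces the four nodal intervals comprising $G^n_k$ to have mutual length ratios within $1+\eta$, so by Corollary \ref{int -> 0} the baseline $|A^n_k(0)|$ is at most $\eta/3 + \pi/(6n^2)$, which is arbitrarily small. The local slope bound then shows that the (monotone) image $f_k([-t,t])$ is an interval with endpoints roughly $\pm c t + A^n_k(0)$ for some universal $c > 1$; taking $\eta$ small and $t$ small ensures this interval contains $[-t,t]$.

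For the second claim, using the product form above gives
\[
g_k(y) - f_k(y) = \frac{1}{|G^n_k|} \int_{G^n_k} T_n(x)\, R_k(x; y_k) \Bigl(\prod_{m \ne k} R_m(x; y_m) - 1\Bigr)\,dx.
\]
On $G^n_k$ the product over $m\neq k$ differs from $1$ by $\sum_{m\neq k} O(t |G^n_m|^3 / \dist(G^n_k, G^n_m)^3)$ plus negligible cross terms (since each $|R_m-1|$ is $O(t)$ and $t$ is small). Bounding $|T_n| \leq 1$ and applying Corollary \ref{ratio upper bound} to convert $|G^n_m|/\dist(G^n_k, G^n_m) = O(1/|m-k|)$, the total is at most $C t \sum_{m \ne k} |m-k|^{-3}$, a convergent sum bounded by a universal constant times $t$. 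Choosing $t$ so this constant is at most $1/2$ yields $\lVert f_k - g_k \rVert_{Q^N_t} \leq t/2$.

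The main obstacle is the quantitative local slope estimate underlying the first claim: to guarantee $f_k([-t,t]) \supset [-t,t]$ we need the derivative $\partial f_k / \partial y_k$ at $0$ bounded below by a constant strictly greater than the residual $|A^n_k(0)|/t$, uniformly in $k$ and $n$. This forces a careful accounting of how the 3-point perturbation, with $\delta = \delta(y_k, a)$ pinned by the $|x|^{-3}$ decay requirement, redistributes mass between the adjacent positive and negative nodes. The required uniformity over $K < k \leq N-K$ and $n \geq n_0$ is exactly what the length/ratio estimates of Section \ref{length sec} and the area estimates of Section \ref{area sec} were set up to deliver.
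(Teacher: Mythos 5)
Your overall plan mirrors the paper's two-part structure (Lemma \ref{internal} and Corollary \ref{f is homeo} for the first claim, Lemma \ref{external} for the second), and the product representation $T_n(x,y)=T_n(x)\prod_m R_m(x;y_m)$ with cubic decay is exactly the right mechanism. The sketch of the first claim is fine. The second claim is where the argument breaks down, in two concrete places.

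First, the conversion $|G^n_m|/\dist(G^n_k,G^n_m)=O(1/|m-k|)$ is false for $|m-k|=1$: adjacent intervals $G^n_k$ and $G^n_{k+1}$ share an endpoint, so $\dist(G^n_k,G^n_{k+1})=0$ and your bound on $|R_{k+1}-1|$ at that endpoint is infinite. The decay rate of $R_m-1$ is governed by the distance from $x$ to the three \emph{roots} of $Q_m=(x-r^n_{4m-2})(x-r^n_{4m-1})(x-r^n_{4m})$, all of which lie strictly interior to $G^n_m$; that is what Corollary \ref{R est far rescaled} measures, with $d$ the normalized distance to the center root. The paper never uses $\dist(G^n_k,G^n_m)$ at all; for nearby intervals it writes the denominator explicitly as $(4j-3)(4j-2)(4j-1)$ (so the $j=1$ contribution is $1/6$, not divergent), and it then uses a convexity argument to pair the $G^n_{m+j}$ and $G^n_{m-j}$ contributions so the sum can be evaluated at an endpoint of $G^n_m$. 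Corollary \ref{ratio upper bound} is only used in the ``distant'' regime $|m-k|>M$, where the endpoint-sharing problem does not arise.

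Second, even once the sum is finite, the bound you obtain is $|g_k-f_k|\leq C''t$ with $C''$ a universal constant independent of $t$; ``choosing $t$ so this constant is at most $1/2$'' is not an available move. The constant must actually be computed and shown to be $\leq 1/2$, and this is delicate: the naive bound $\sum_{j\ne 0}|j|^{-3}\approx 2\zeta(3)\approx 2.4$, multiplied by the numerator $\approx 2$, gives $C''\approx 5$, nowhere near $1/2$. The paper's Lemma \ref{external} brings this down by (i) the exact denominators $(4j-3)(4j-2)(4j-1)$, which are much larger than $j^3$ for small $j$; (ii) the symmetric pairing of $\pm j$ plus convexity, which makes consecutive fractions telescope so each appears at most twice; and (iii) a split into nearby/distant regimes, with the distant tail bounded by $\exp(Ct/(M-1)^2)$ for large $M$. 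The resulting sum is numerically $\leq 1/5$, giving a multiplicative distortion $\leq 1+\frac{3t}{7}$, and combined with the bound $|A^n_m(\widetilde y_m)|\leq \frac{7}{6}$ from Lemma \ref{sup bound}, the constant comes out to exactly $\frac{3}{7}\cdot\frac{7}{6}=\frac 12$. Without this bookkeeping the conclusion does not follow, and it is precisely why the paper chose $3$-point rather than $2$-point perturbations (the $j^{-2}$ series has too large a constant near $j=1$).
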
 

This will be proven in Sections \ref{3-pt sec} to \ref{exterior sec}.
If $N$ were equal to $1$, then Theorem \ref{delta estimate} and 
the intermediate value theorem would
immediately imply that $g(Q^1_t)$ covers $Q^1_{t/2}$. In Section \ref{fixed pt sec} 
we will use Brouwer's theorem to draw the same conclusion in the higher 
dimensional case.

\section{The distortion of  2-point perturbations}  \label{2-pt sec} 

Before discussing 3-point perturbations, 
we briefly  discuss moving just two roots. It seems worthwhile to 
do this, since the 3-point perturbation can be 
thought of as the composition of two 2-point perturbations, and the discussion 
of the 2-point perturbation explains the main idea in a simpler setting. 

If a polynomial has zeros at $\pm 1$ and we move these by $\epsilon$ to the 
left and right respectively, this is the same as multiplying the polynomial by 
(see Figure \ref{Rat_Mult_2point})
\begin{align} \label{2-pt R est}
R(x)=	\frac {(x-1-\epsilon)(x+1+\epsilon)}{(x-1)(x+1)} 
	 &=  \frac{x^2 - (1+\epsilon)^2}{ x^2-1}  
	  =  1 -   \frac{ 2\epsilon+ \epsilon^2}{ x^2-1}  .
\end{align} 
Then  $R(x) \geq 1+2 \epsilon + \epsilon^2$ on $[-1,1]$,
and  $0 < R(x) < 1$ on $\{x: |x|>1 +\epsilon \}$.  
 
If we have a polynomial $p$ that has roots at $\pm 1$, and we move 
these roots to $\pm(1+\epsilon)$, then the calculation above says that 
the new polynomial $\widetilde p$  is larger  than $p$ 
(in absolute value)  in $[-1,1]$ and is
smaller (in absolute value)   outside $[-1 - \epsilon,1 + \epsilon]$.
We call the  ratio  $R(x)$  the 2-point distortion function, since it describes how a 
polynomial is altered by moving two of its roots.

We want to generalize this  to a polynomial $p$  with roots at $\{a,b\}$,
that we move slightly  to $\{ a - (b-a)\epsilon/2, b+(b-a)\epsilon/2\}$ (each root
gets moved by $\epsilon$, relative to the length of the interval). 
We say the roots have been ``perturbed by a factor of $\epsilon$.''
Moving the roots apart in this way multiplies $p$ by at least 
$1+2\epsilon + \epsilon2$ in the interval $I$,  and decreases it  outside $I$
in  absolute value by a factor 
\begin{align} \label{distort factor}
	1 -   \frac{ 2\epsilon+ \epsilon^2}{ 4(\dist(x,c)/|I|)^2-1} ,
\end{align} 
where $c =(b+a)/2$ is the center of the interval.  An example of 
perturbing two roots of a Chebyshev polynomial  
was illustrated in Figure \ref{plot_2pt_full_square}. 

\begin{figure}[htb]
\centerline{ 
\includegraphics[height=2.5in]{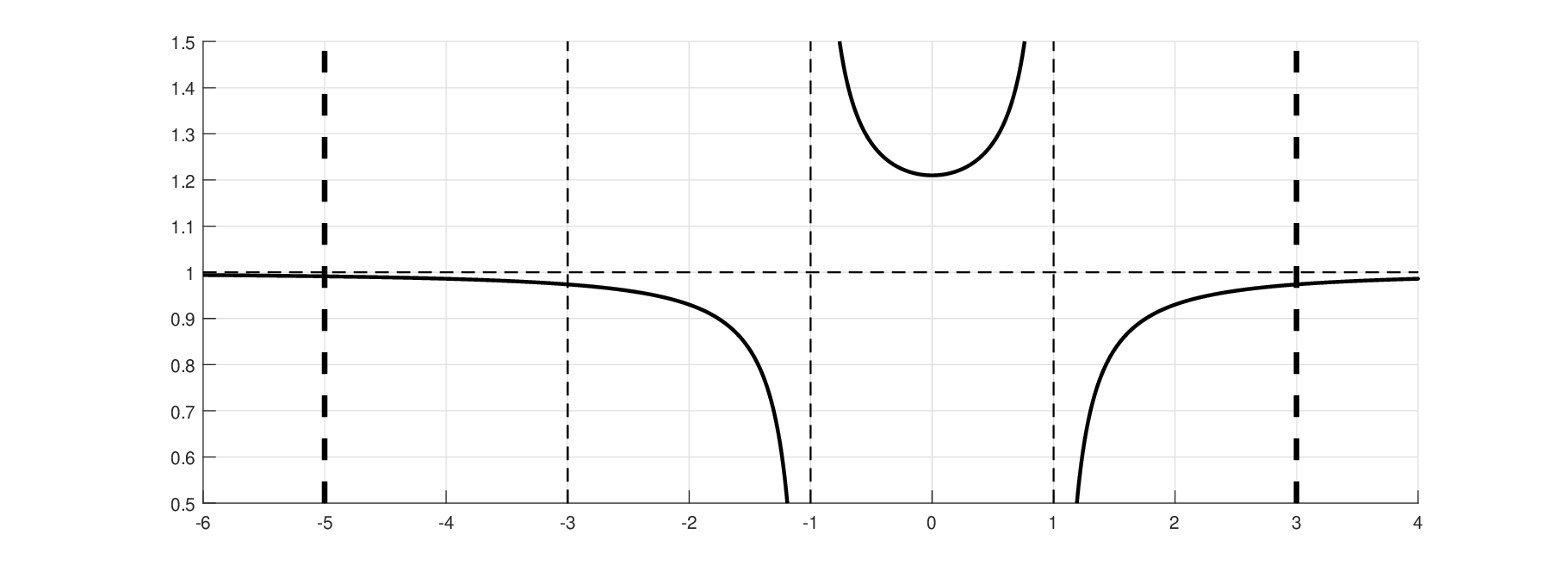}  
}
\caption{ \label{Rat_Mult_2point}
A plot of $r(t) = (a+1+\epsilon)(x-1-\epsilon)/(x^2-1)$. 
If $P$ has roots at $\pm 1$, and we move them by $\pm \epsilon$, 
the new polynomial is $\widetilde  p = R\cdot  p$.  
}
\end{figure}

\section{ The distortion of 3-point perturbations } \label{3-pt sec} 

In this section, we describe the distortion caused by moving three  adjacent roots
of a Chebyshev polynomial.
The two nodal intervals with these endpoints  need not have equal 
lengths,  but  we may assume they  have length ratio close to 1, and 
we will model the situation using three points
$\{ -a, 0,1 \}$ where $a = 1 +\alpha$ with $\alpha$ small, 
say $|\alpha| < 1/10$.
After rescaling, this will cover all the cases that are needed later.

Suppose we have a polynomial $p$  with roots at $\{ -a,0,1\}$,  among 
possibly many other roots.
We create a new polynomial $\widetilde p$ by 
moving these three roots to $-a+\delta$, $-\epsilon -\delta $ and $1+\epsilon$, 
respectively. 
Note that this keeps the center of mass of the roots unchanged  at $(1-a)/3$. 
See Figure \ref{Compare 2 and 3 pts}.

We can think of this  3-point perturbation as the composition 
of two 2-point perturbations:
first moving the pair $\{0,1\}$ to $\{-\epsilon, 1+\epsilon\}$
and then moving the pair $\{-a, -\epsilon\}$  to $\{-a+\delta, -\epsilon-\delta\}$.
Since each move changes $p$ by a factor of $1+O(x^{-2})$  far from the origin, 
the combined motion  also has at least  this decay rate. 
However, by carefully  choosing $\delta$ (depending on $a$ and $\epsilon$),
we can arrange for cancelation that improves the decay rate  to  $O(|x|^{-3})$. 
The remainder of this section explains how to do this.

\begin{figure}[htb]
\centerline{ 
\includegraphics[height=2.0in]{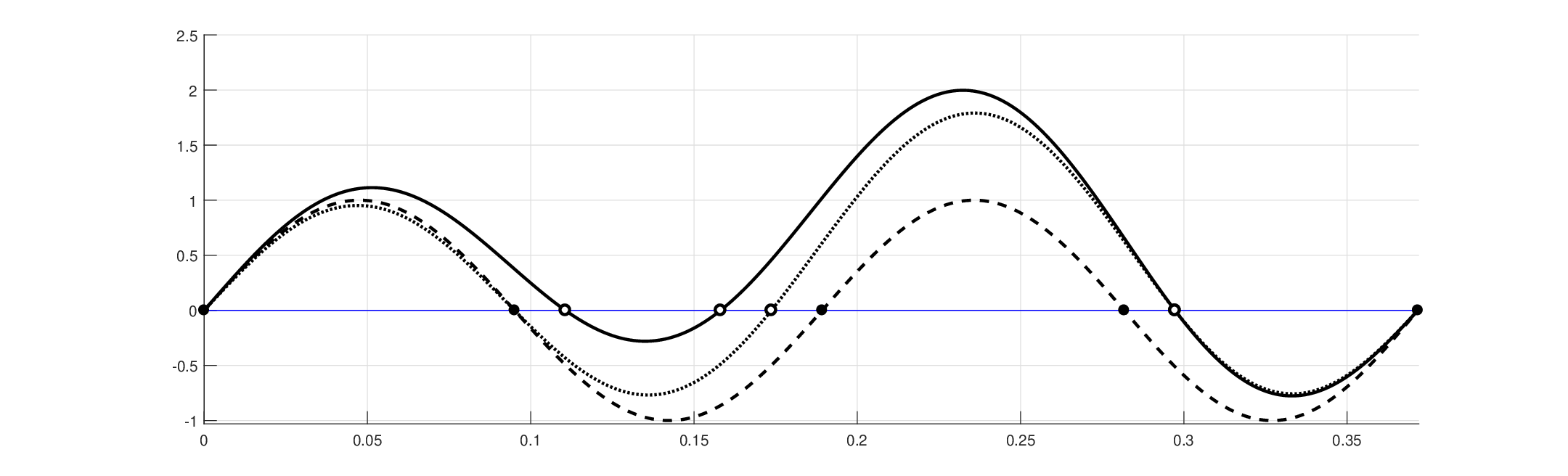}  
	}
\caption{ \label{Compare 2 and 3 pts}
The original Chebyshev polynomial is solid, the 2-point perturbation is dotted
and the 3-point perturbation is dashed.
Each perturbation  is strictly larger than the 
unperturbed polynomial over  the nodal intervals adjacent to the 
perturbed roots, but this need not hold further away: the  2-point 
perturbation is slightly smaller than $T_n$ in  the leftmost  nodal interval.
}
\end{figure}
 
\begin{figure}[htb]
\centerline{
\includegraphics[height=2.4in]{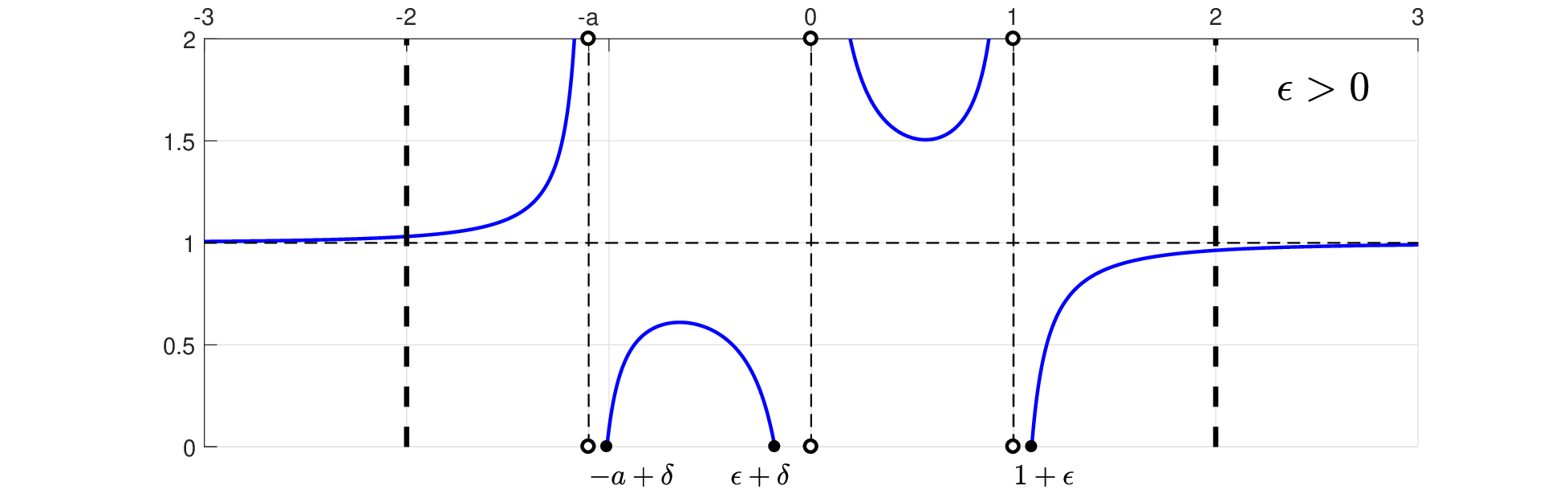}
}
\caption{ \label{Rat_Mult_3point}
The rational function $R$ corresponding to the 3-point perturbation
$\{-a, 0, 1\} \to \{ -a+\delta, -\delta-\epsilon, 1+ \epsilon\}$.
Here we have taken $a= 1.1$ and $\delta= \epsilon = .09>0$.
Over  $[0,1]$, $R$  is bounded strictly above $1$ by an amount 
comparable to the size of the perturbation, and on
the other three nodal intervals, the perturbed polynomial is
larger than the original. 
}
\end{figure}

Note that   $\widetilde p = p \cdot R$ where
\[ 
R(x) = \frac {P(x)}{Q(x)} := 
\frac  {(x+a-\delta)(x+\delta+\epsilon)(x-1-\epsilon)} 
  {(x+a)x(x-1)} .\]
 We call $R$ the 3-point distortion function associated with the perturbation. 
See Figure \ref{Rat_Mult_3point}.
We can write $P(x) = x^3 + Ax^2 + Bx + C$ where 
\begin{align*}
	A &= (a-\delta)+( \delta + \epsilon) +(-1-\epsilon) = a-1,
\end{align*} 
\begin{align*}
	B 
	&= (a-\delta)( \delta + \epsilon) +(a-\delta)(-1-\epsilon)  
	+ (\delta+ \epsilon)(-1-\epsilon)  \\
	&= (a\delta + a\epsilon -\delta^2-\delta\epsilon) + (-a-a \epsilon+\delta + \delta \epsilon)   
	+  (-\delta - \delta \epsilon - \epsilon - \epsilon^2)   \\
	&=  -a - \epsilon + a \delta  - \delta\epsilon  -\delta^2  -\epsilon^2,
\end{align*} 
\begin{align*}
	C 
	&= (a-\delta)( \delta + \epsilon) (-1-\epsilon)  \\
	&= (a \delta+ a \epsilon -\delta^2 - \delta\epsilon)(-1-\epsilon)  \\
	&= -a \delta- a \epsilon +\delta^2 + (1-a) \delta\epsilon
	- a \epsilon^2 +\delta^2 \epsilon + \delta\epsilon^2. 
\end{align*} 
Then
\begin{align*} 
	R(x) &=  \frac{P(x)}{Q(x)} =  1 +  \frac {P(x) -Q(x)} {Q(x)}  \\
	&= 1+\frac {( x^3 + Ax^2 + B x + C) -(x^3 +(a-1)x^2 -ax)}  { (x+a)x(x-1)} \\
	&= 1+\frac { (A-a+1) x^2 + (B+a) x + C) }  { (x+a)x(x-1)} \\
	&= 1+ \frac { b x + C}  { (x+a)x(x-1)} 
\end{align*} 
where 
\[ b=  B+a=  a\delta -\epsilon - \delta\epsilon  -\delta^2  -\epsilon^2.\]
We want to choose $\delta$ so that $b=0$. If $\delta = \lambda \epsilon$, then  
the equation $b=0$  becomes 
\begin{align*} 
	0&=   a  \lambda \epsilon- \epsilon - \lambda\epsilon^2  -\lambda^2 \epsilon^2  -\epsilon^2\\
	0&=  a  \lambda - 1 -\epsilon(\lambda^2 + \lambda +1)\\
	\epsilon &=   \frac{  a  \lambda - 1}{\lambda^2 + \lambda   +1} =  r_a(\lambda).
\end{align*} 
The rational function $r_a$  on the right has
a zero at $1/a$ where it has slope $ \approx 1/3$. More precisely, if
$a=1+\alpha$, then  using long division of polynomials we get 
\[ r_a'(\frac 1a) = \frac{a^3 }{1+a+a^2} 
= \frac 13 \cdot  \frac{1 + 3 \alpha + 3 \alpha^2 + \alpha^3}{ 1 + \alpha + \alpha^2/3}
=\frac 13 [1 +  2\alpha+ \frac  23 \alpha^2 + O(\alpha^3)].\]
So if $\epsilon$ is small and 
$a \approx 1$ (both will hold in our application), then the equation $r_a(\lambda) = \epsilon$
will have a solution of the form 
\[\lambda  = (1/a) + 3\epsilon/a + O(\epsilon^2 + \epsilon \alpha^2).\]
Hence  $\delta = \epsilon/a + O(\epsilon^2)$. 
See Figure \ref{DeltaOfEpsilon}. 
The calculations up to this point prove the following result, giving the 
desired cubic decay of the distortion function $R$.

\begin{figure}[htb]
\centerline{ 
\includegraphics[height=2.5in]{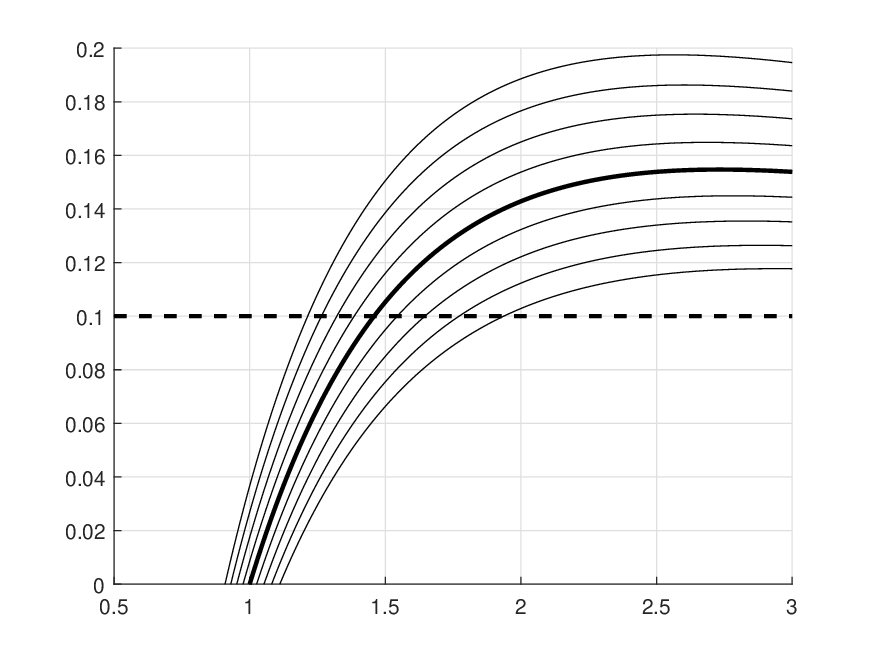}  
}
\caption{ \label{DeltaOfEpsilon}
A plot of $r(t) = (at-1)/(1+t+t^2)$ for $a =\{ .8, .85, \dots 1.2\}$. The curve 
for $a=1$ is thickened. The plots show that  
$r(t) = \epsilon$ has a solution for all $\epsilon \in [0, .1]$ and 
all $a \in [.8, 1.2]$.
}
\end{figure}

\begin{lemma} \label{R = 1+C/Q}
If $|a-1| < 1/5$  and $0\leq \epsilon \leq 1/10$, then we can make a  3-point 
perturbation of the form $\{-a,0, 1\} \to \{ -a+\delta, -\delta-\epsilon, 1+\epsilon\}$ so that 
	$\delta = \epsilon/a + O(\epsilon^2)$,  and the distortion equals 
\begin{align} \label{R expansion}
		R(x) &=  1+ \frac C{Q(x)} = 1 + \frac C{(x+a)x(x-1)}, 
\end{align}
where 
	\begin{align} \label{C estimate} 
C  &=  -a \delta- a \epsilon +\delta^2 + (1-a) \delta\epsilon
- a \epsilon^2 +\delta^2 \epsilon + \delta\epsilon^2 
	= -(1+a) \epsilon + O(\epsilon^2).
\end{align} 	
\end{lemma}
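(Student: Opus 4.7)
The plan is to finish the algebraic reduction already begun just above the lemma statement and then check that the resulting scalar equation is solvable with the claimed asymptotics. The computation of $P(x) - Q(x) = (A - (a-1))x^2 + (B+a)x + C$ shows that the $x^2$ coefficient automatically vanishes because $A = a - 1$; thus $R(x) - 1 = (bx + C)/Q(x)$ with $b = B + a = a\delta - \epsilon - \delta\epsilon - \delta^2 - \epsilon^2$. The task reduces to picking $\delta = \delta(\epsilon,a)$ so that $b = 0$, which produces the cubic decay \eqref{R expansion}, and then to substituting that $\delta$ into the already-derived formula for $C$ to obtain \eqref{C estimate}.

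For the solvability of $b = 0$ I would substitute $\delta = \lambda \epsilon$, reducing to the scalar equation $\epsilon = r_a(\lambda)$, where $r_a(\lambda) = (a\lambda - 1)/(\lambda^2 + \lambda + 1)$. Observe $r_a(1/a) = 0$, and compute $r_a'(\lambda) = (-a\lambda^2 + 2\lambda + a + 1)/(\lambda^2 + \lambda + 1)^2$, so $r_a'(1/a) = a^3/(1 + a + a^2)$. On the parameter range $a \in [4/5, 6/5]$ this derivative is uniformly bounded above and below away from $0$ (a short direct estimate gives $r_a'(1/a) \in [1/4, 1/2]$, say). Restricting $\lambda$ to a small fixed neighborhood of $1/a$ on which $r_a' > 0$ (also verifiable by a direct bound), $r_a$ is strictly increasing and $r_a^{-1}$ is defined on a neighborhood of $0$; shrinking $\epsilon \le 1/10$ if necessary, the intermediate value theorem yields a unique $\lambda(\epsilon,a)$ in that neighborhood solving $r_a(\lambda) = \epsilon$.

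To extract the asymptotic, I would Taylor expand $r_a$ about $1/a$:
\[
\epsilon = r_a(\lambda) = r_a'(1/a)\,(\lambda - 1/a) + O((\lambda - 1/a)^2),
\]
so $\lambda = 1/a + \epsilon/r_a'(1/a) + O(\epsilon^2) = 1/a + 3\epsilon/a + O(\epsilon^2 + \epsilon\alpha^2)$ with $\alpha = a - 1$, where the higher-order remainder is uniform in the stated range because $r_a'$ and $r_a''$ are continuous and bounded there. Multiplying by $\epsilon$ gives $\delta = \lambda\epsilon = \epsilon/a + O(\epsilon^2)$, which is the first claim.

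For the formula for $C$, I would simply insert $\delta = \epsilon/a + O(\epsilon^2)$ into the explicit expression $C = -a\delta - a\epsilon + \delta^2 + (1-a)\delta\epsilon - a\epsilon^2 + \delta^2\epsilon + \delta\epsilon^2$ already derived above. All terms from $\delta^2$ onward are $O(\epsilon^2)$, and the two linear terms collapse to $-a\delta - a\epsilon = -\epsilon - a\epsilon + O(\epsilon^2) = -(1+a)\epsilon + O(\epsilon^2)$, yielding \eqref{C estimate}. The main obstacle is largely bookkeeping: one must verify that the implicit-function constants (i.e., the lower bound on $r_a'$ and the bound on $r_a''$) can be chosen uniformly in $a \in [4/5, 6/5]$, but this is immediate from the explicit polynomial form of $r_a$. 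No deeper ingredient is required.
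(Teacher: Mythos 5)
Your proposal follows the same route as the paper: reduce $R(x)-1$ to the form $(bx+C)/Q(x)$, kill the linear term $b$ by solving $r_a(\lambda)=\epsilon$ with $\delta=\lambda\epsilon$, expand around $\lambda=1/a$, and substitute back into the explicit expression for $C$. The only slip is cosmetic: at $a=4/5$ we have $r_a'(1/a)=a^3/(1+a+a^2)\approx 0.21<1/4$, so the stated bound $r_a'(1/a)\in[1/4,1/2]$ is slightly off, but this is immaterial since all that is needed is a uniform lower bound away from $0$ on the range $|a-1|<1/5$, which your explicit formula plainly gives.
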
 

The following lemma implies the perturbed polynomial  
moves monotonically as a function of the perturbation parameter $\epsilon$.
See Figures \ref{Perturb_3point_plusminus} and \ref{Perturb_3point_plotrange}. 
This will later imply that the function $f$ defined in (\ref{defn f}) is a 
homeomorphism, which will be needed in our application of Brouwer's theorem
(see  Lemma \ref{topology lemma 2}). 

\begin{lemma} \label{monotone} 
Suppose $\widetilde p_1$ and $\widetilde p_2$ are perturbations of 
$p(x) = (x+a)x(x-1)$ as described above, by factors of $\epsilon_1 < \epsilon_2$
respectively. Then $\widetilde p_1 > \widetilde p_2$ for all $x$.
\end{lemma}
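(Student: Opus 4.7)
The plan is to exploit a rather striking consequence of the choice of $\delta$ made in Lemma~\ref{R = 1+C/Q}: once we impose $b=0$, the coefficients of $x^3$, $x^2$, and $x$ in $\widetilde p$ are all independent of $\epsilon$, so $\widetilde p_1 - \widetilde p_2$ is a \emph{constant} function of $x$. Granting this, the lemma reduces to the scalar monotonicity statement $C(\epsilon_1) > C(\epsilon_2)$, which I would verify by differentiation.

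First, Lemma~\ref{R = 1+C/Q} gives $\widetilde p(x) = Q(x) + C(\epsilon)$, where $Q(x) = (x+a)x(x-1)$ does not depend on $\epsilon$ and
\[
C(\epsilon) = \widetilde p(0) = -(1+\epsilon)(\delta+\epsilon)(a-\delta)
\]
is the constant term. (The leading coefficient is $1$; the sum of the three perturbed roots is $(-a+\delta) + (-\delta-\epsilon) + (1+\epsilon) = 1-a$ regardless of $\epsilon$, giving coefficient $a-1$ on $x^2$; and the condition $b=0$ is exactly the statement that the coefficient of $x$ equals $-a$.) Consequently
\[
\widetilde p_1(x) - \widetilde p_2(x) = C(\epsilon_1) - C(\epsilon_2),
\]
independent of $x$, and the lemma reduces to showing that $C$ is strictly decreasing on $[0,1/10]$.

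Next, I would confirm by implicit differentiation that $\delta(\epsilon)$ is a well-defined, differentiable, and strictly increasing function on $[0,1/10]$. Differentiating $a\delta - \epsilon - \delta\epsilon - \delta^2 - \epsilon^2 = 0$ gives
\[
\delta'(\epsilon) = \frac{1 + \delta + 2\epsilon}{a - \epsilon - 2\delta} > 0,
\]
provided $a - \epsilon - 2\delta > 0$, which holds whenever $|a-1| < 1/5$ and $\epsilon, \delta$ remain small. The estimate $\delta = \epsilon/a + O(\epsilon^2)$ from Lemma~\ref{R = 1+C/Q} keeps $\delta$ in this range throughout $\epsilon \in [0,1/10]$.

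Finally, I would differentiate $C(\epsilon)$ directly. Writing $u = 1+\epsilon$, $v = \delta+\epsilon$, $w = a-\delta$, the product rule yields
\[
-C'(\epsilon) = vw + u(\delta'+1)w - uv\delta' = vw + uw + u\delta'(w - v),
\]
and every summand is positive since $u, v, w, \delta' > 0$ and $w - v = a - 2\delta - \epsilon > 0$ under the assumed bounds. Hence $C'(\epsilon) < 0$ strictly throughout $[0,1/10]$, which finishes the proof. The conceptual work is all in the first step: once the defining property $b=0$ is seen to force $\widetilde p_1 - \widetilde p_2$ to be a constant, the rest is a sign check, whose main subtlety is the bookkeeping to keep $\delta$ in the range where every positivity condition survives.
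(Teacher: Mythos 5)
Your first step is exactly the paper's key observation: once $\delta$ is chosen so that $b = 0$, the quotient $R$ is $1 + C/Q$, so $\widetilde p = RQ = Q + C$ and $\widetilde p_1 - \widetilde p_2 = C_1 - C_2$ is a constant, independent of $x$. Where you diverge is the second step, showing this constant is positive. The paper does this qualitatively: evaluate $\widetilde p_1$ at the rightmost root $1+\epsilon_2$ of $\widetilde p_2$, where $\widetilde p_2 = 0$; since the three factors of $\widetilde p_1$ at that point are all visibly positive (the last being $\epsilon_2 - \epsilon_1 > 0$), the constant difference must be positive. You instead compute $C'(\epsilon)$ via implicit differentiation of the constraint $a\delta - \epsilon - \delta\epsilon - \delta^2 - \epsilon^2 = 0$. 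That works, and is a reasonable alternative, but it is more machinery than needed and, as written, it is restricted in a way the lemma cannot afford.

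Specifically, you only treat $\epsilon \in [0, 1/10]$, and your sign-check ``every summand is positive since $u,v,w,\delta' > 0$'' relies on $v = \delta + \epsilon > 0$, which fails for $\epsilon < 0$ (then $\delta < 0$ as well, so $v < 0$). But Lemma~\ref{monotone} is invoked for perturbations of both signs: Corollary~\ref{R est 3}, Corollary~\ref{R est 4}, and the monotonicity claim in Lemma~\ref{internal} all use $y_m < 0$. The fix is easy---group as
$-C' = (u+v)w + u\delta'(w-v) = (1 + \delta + 2\epsilon)w + u\delta'(w-v)$,
and both summands are positive for $|\epsilon|, |\delta|$ small and $|a-1| < 1/5$---but you should state the argument over the full symmetric range rather than only $[0,1/10]$. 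The paper's evaluation-at-a-root argument handles both signs without any case split, which is one reason to prefer it.
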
 

\begin{proof}
Note that 
	\[\widetilde p_1 - \widetilde p_2 = R_1 p - R_2p 
	 =  (1+ \frac {C_1}{Q(x)}) Q(X ) 
	 -(1+ \frac {C_2}{Q(x)}) Q(X )  = C_1 - C_2.\]
Thus $\widetilde p_1 >  \widetilde p_2$ if and only if
 $C_1 > C_2$.
Thus we either have $\widetilde p_1 > \widetilde p_2$ everywhere, 
or  $\widetilde p_1  \leq  \widetilde p_2$ everywhere.
Since  $\epsilon_1< \epsilon_2$, the rightmost  perturbed  root of 
$\widetilde p_1$
is to the left of the  corresponding   root  $y$ of $ \widetilde p_2$, and hence   
$\widetilde p_1(y)> \widetilde p_2(y)=0$. Thus  $\widetilde p_1 > \widetilde p_2$  
everywhere. 
\end{proof} 

\begin{figure}[htb]
\centerline{ 
\includegraphics[height=2.4in]{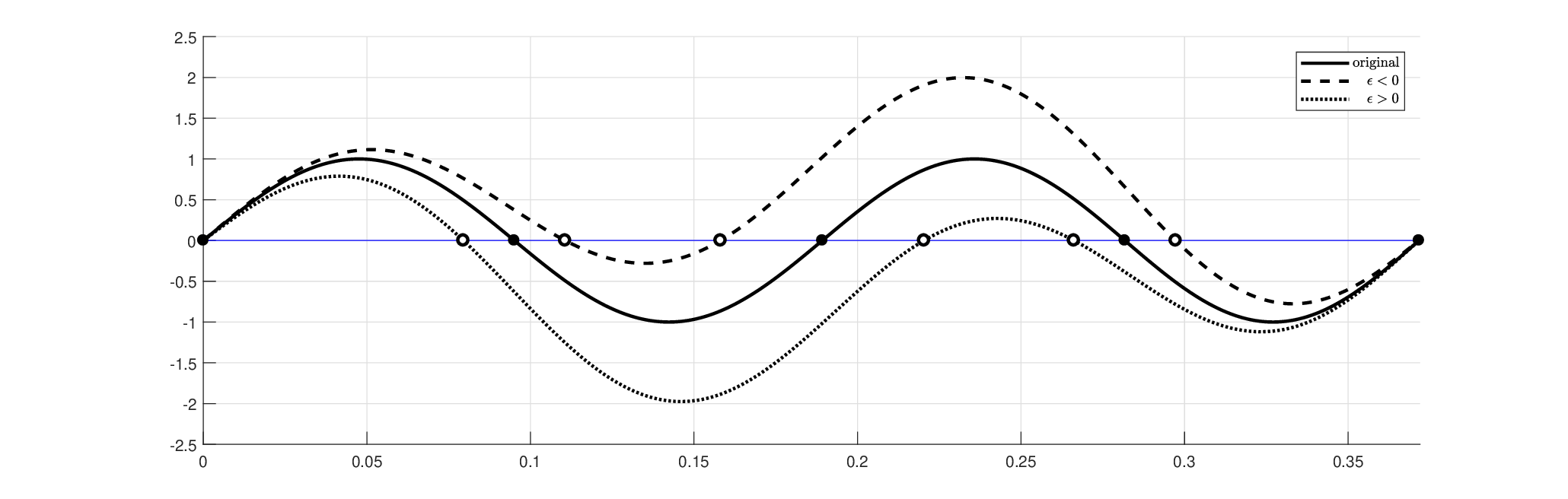}  
}
\caption{ \label{Perturb_3point_plusminus}
A 3-point perturbation for both positive and negative values of $\epsilon$. 
The solid graph is the unperturbed Chebyshev polynomial over one group of 
four nodal intervals. The dashed line is the $\epsilon>0$ perturbation 
and the dotted is the $\epsilon<0$ perturbation.
This illustrates the monotonic movement proven in  Lemma 
\ref{monotone}.
A wider range of perturbations is shown in Figure \ref{Perturb_3point_plotrange}. 
}
\end{figure}

Note that  in (\ref{C estimate}), 
if  $\epsilon$ is  small and $a$ is close to $1$ (which is the case in 
our applications), then  we have $C \approx -2 \epsilon$.
In particular, for $\epsilon$ sufficiently small, $R(x)-1$ has the opposite sign as 
$Q(x) = (x+a)x(x-1)$. Therefore we get the following inequalities.
See Figure \ref{Rat_Mult_3point}.

\begin{cor}  \label{R est}
For $|\epsilon|$ small enough, 
\begin{align*}
	R(x) \geq 1, &\text{ if }   \epsilon >0 \text{  and  } x \in (-\infty , -a] \cup [0,1],\\
	R(x) \geq 1,  &\text{ if }   \epsilon < 0 \text{  and  } x \in (-a,0] \cup [1,\infty), \\
	R(x) \leq 1,  &\text{ if }   \epsilon <0 \text{  and  } x \in (-\infty , -a] \cup [0,1], \\
	R(x) \leq 1,   &\text{ if }   \epsilon > 0 \text{  and  } x \in (-a,0] \cup [1,\infty).
\end{align*} 
\end{cor}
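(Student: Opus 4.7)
The plan is to read the corollary directly off the formula \eqref{R expansion} from Lemma \ref{R = 1+C/Q}, reducing everything to a sign analysis of the constant $C$ and the cubic $Q(x) = (x+a)x(x-1)$.

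First, I would fix the sign of $C$. The estimate \eqref{C estimate} gives $C = -(1+a)\epsilon + O(\epsilon^2)$, and since the hypothesis $|a-1|<1/5$ forces $1+a \geq 9/5$, the coefficient $-(1+a)$ is bounded away from $0$ uniformly in $a$. Thus there is a threshold $\epsilon_0>0$, depending only on the implicit constants in Lemma \ref{R = 1+C/Q}, such that whenever $|\epsilon|\leq \epsilon_0$ the sign of $C$ is opposite to that of $\epsilon$: explicitly $C<0$ when $\epsilon>0$ and $C>0$ when $\epsilon<0$.

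Next I would analyze the sign of $Q(x)=(x+a)x(x-1)$ on the four intervals appearing in the statement, using that $a>0$. A direct inspection of the three linear factors gives
\begin{align*}
Q(x) &\leq 0 \quad \text{on } (-\infty,-a]\cup[0,1], \\
Q(x) &\geq 0 \quad \text{on } [-a,0]\cup[1,\infty),
\end{align*}
with equality only at the three roots $-a,0,1$ themselves. Combining this with \eqref{R expansion}, namely $R(x)-1 = C/Q(x)$, yields the four claimed inequalities at once: for $\epsilon>0$ we have $C<0$, so $R(x)-1$ has the opposite sign of $Q(x)$, giving $R\geq 1$ on $(-\infty,-a]\cup[0,1]$ and $R\leq 1$ on $(-a,0]\cup[1,\infty)$; for $\epsilon<0$, $C>0$ reverses both conclusions.

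I do not expect a serious obstacle here. The one point that requires attention is that $R$ has genuine poles at $x=-a,0,1$, where the inequalities must be interpreted as one-sided limits (or, equivalently, on the interiors of the given intervals), and that the threshold $\epsilon_0$ must be chosen uniformly in $a\in[4/5,6/5]$ — but both issues are handled by the fact that the leading coefficient $-(1+a)$ in the expansion of $C$ stays away from zero on the allowed range of $a$, so the $O(\epsilon^2)$ correction in \eqref{C estimate} is dominated by the linear term for all sufficiently small $|\epsilon|$ independent of $a$.
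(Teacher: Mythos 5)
Your proposal is correct and matches the paper's own (brief) argument: the paper observes that $C\approx -2\epsilon$, so $R(x)-1=C/Q(x)$ has sign determined by $\epsilon$ and the sign of $Q$, which is exactly the sign analysis you carry out. You merely spell out the sign pattern of $Q$ on each interval and the uniformity in $a$, which the paper leaves implicit.
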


We shall give separate estimates  for $R(x)$ 
on different sets of intervals based on their distance to the origin: 
we call these cases   ``near'' ($[-a,1]$), ``intermediate'' ($[-2,-a]$ and $[1,2]$) and 
``far'' ($\{x: |x|\geq 2\}$). 
Since $1+a \approx 2$ if $a \approx 1$ and $|(x+a)x(x-1)| \approx 6$
at $x= \pm 2$, we can immediately deduce the following 
for the intermediate intervals. 

\begin{cor}  \label{R est 0}
For $|\epsilon|$ small enough and $a = 1+\alpha$ close to $1$, we have 
\begin{align*}
R(x) &\geq 1 + [\frac 13 +O(\alpha) +O(\epsilon)] \epsilon > 1, 
   \text{ if }   \epsilon >0 \text{  and  } x \in [-2,-a] ,\\
R(x) &\leq 1 - [\frac 13 +O(\alpha) +O(\epsilon)] \epsilon  < 1,
  \text{ if }   \epsilon >0 \text{  and  } x \in [1,2] ,\\
R(x) &\leq 1 + [\frac 13 +O(\alpha) +O(\epsilon)] \epsilon  < ,
  \text{ if }   \epsilon <0 \text{  and  } x \in [-2,-a] ,\\
R(x) &\geq 1 - [\frac 13 +O(\alpha) +O(\epsilon)] \epsilon  > ,
  \text{ if }   \epsilon <0 \text{  and  } x \in [1,2]. 
\end{align*}
\end{cor}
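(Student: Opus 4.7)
The plan is to work directly from
\[ R(x) = 1 + \frac{C}{Q(x)}, \qquad Q(x) = (x+a)x(x-1), \qquad C = -(1+a)\epsilon + O(\epsilon^2), \]
as given in Lemma \ref{R = 1+C/Q} and (\ref{C estimate}). The inequalities in Corollary \ref{R est 0} reduce to extremum calculations for $C/Q$ on the two intermediate intervals, so the proof is really a sign analysis combined with locating $\max|Q|$ on each interval.

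First I would record the signs. On $[1,2]$ all three factors of $Q$ are nonnegative, so $Q \geq 0$; on $[-2,-a]$ all three are nonpositive (using $a = 1+\alpha$ with $|\alpha|<1/5$), so $Q \leq 0$. For $\epsilon > 0$ sufficiently small, $C < 0$, hence $R - 1 \leq 0$ on $[1,2]$ and $R - 1 \geq 0$ on $[-2,-a]$; the $\epsilon < 0$ case is identical with signs reversed, and both are consistent with Corollary \ref{R est}. The remaining task is to prove the quantitative bound $|R-1| \geq \bigl[\tfrac{1}{3} + O(\alpha) + O(\epsilon)\bigr]|\epsilon|$ on each interval, which amounts to an \emph{upper} bound on $|Q|$ there.

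Next I would locate $\max |Q|$ on each interval. On $[1,2]$ each of the factors $x+a$, $x$, $x-1$ is positive and increasing, so $Q$ itself is increasing and $\max Q = Q(2) = 2(2+a)$. On $[-2,-a]$ the interior critical points of $Q$ solve $3x^2 + 2(a-1)x - a = 0$, whose roots are small perturbations of $\pm \sqrt{3}/3 \approx \pm 0.577$ and therefore lie outside $[-2,-a]$ whenever $|\alpha| < 1/5$. Thus $|Q|$ is monotone on $[-2,-a]$, vanishes at $-a$, and attains its maximum at $x = -2$, where $|Q(-2)| = 6(2-a)$. Dividing then yields
\[ \min_{[1,2]}(1-R) = \frac{(1+a)\epsilon}{2(2+a)} + O(\epsilon^2), \qquad \min_{[-2,-a]}(R-1) = \frac{(1+a)\epsilon}{6(2-a)} + O(\epsilon^2), \]
for $\epsilon > 0$. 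A routine Taylor expansion of each prefactor in $\alpha = a - 1$ produces $\tfrac{1}{3} + O(\alpha)$, giving the asserted inequalities; the two $\epsilon < 0$ cases are symmetric. The only step that needs any care is verifying that the critical points of $Q$ actually stay out of $[-2,-a]$, but since those critical points hover near $\pm 0.577$ while $-a \leq -4/5$, this is immediate and there is no deeper obstacle.
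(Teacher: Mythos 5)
Your proof is correct and follows essentially the same route as the paper: the paper ``proves'' Corollary~\ref{R est 0} with a one-sentence remark that $1+a \approx 2$ and $|Q(\pm 2)| \approx 6$, so that Lemma~\ref{R = 1+C/Q} immediately gives $|R-1| \gtrsim (2\epsilon)/6 = \epsilon/3$ at the far ends of the intermediate intervals. You fill in the implicit step --- verifying that $Q$ is monotone on each intermediate interval (each factor increasing on $[1,2]$; critical points near $\pm 1/\sqrt{3}$ lying outside $[-2,-a]$ when $|\alpha|<1/5$), so that $\max|Q|$ is attained at $x=\pm 2$, with $Q(2)=2(2+a)$ and $|Q(-2)|=6(2-a)$ --- and then carry out the same division and Taylor expansion in $\alpha$. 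This is a useful elaboration of the paper's terse argument rather than a genuinely different one.
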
 

Next  we estimate the distortion $R(x)$  on the ``far'' intervals.

\begin{cor} \label{R est far}
	Suppose $a = 1+\alpha$ with $|\alpha|< 1/5$. Then for $|x| \geq 2$, 
	\[R(x) 
	= 1 -  \frac { [2+O(\alpha)+O(\epsilon)] \epsilon   } 
	 { (x+1)x(x-1)}  .
	 \]
\end{cor}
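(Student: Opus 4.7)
The plan is to start from the explicit formula for $R(x)$ proved in Lemma \ref{R = 1+C/Q} and then perform two Taylor-type reductions: first to replace the coefficient $C$ by its leading-order expression in $\epsilon$ and $\alpha$, and then to replace the denominator $(x+a)x(x-1)$ by $(x+1)x(x-1)$, picking up only an $O(\alpha)$ multiplicative error because we are working in the ``far'' range $|x|\geq 2$.

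More concretely, I would first write $a=1+\alpha$, so that $1+a = 2+\alpha$, and use (\ref{C estimate}) in the form
\[
C = -(1+a)\epsilon + O(\epsilon^2) = -\bigl[2 + O(\alpha) + O(\epsilon)\bigr]\epsilon .
\]
Here I am absorbing the error terms in $\delta, \delta^2, \delta\epsilon, \delta^2\epsilon, \delta\epsilon^2$ using $\delta = \epsilon/a + O(\epsilon^2) = O(\epsilon)$ from Lemma \ref{R = 1+C/Q}, together with $|\alpha|<1/5$ so that $1/a$ is bounded.

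Next I would handle the denominator. Writing $x+a = (x+1) + \alpha$, I note that for $|x|\geq 2$ we have $|x+1|\geq 1$, hence
\[
\frac{1}{x+a} \;=\; \frac{1}{x+1}\cdot \frac{1}{1+\alpha/(x+1)} \;=\; \frac{1}{x+1}\bigl(1 + O(\alpha)\bigr),
\]
since $|\alpha/(x+1)|\leq |\alpha| < 1/5$ makes the geometric series converge with the stated error. Therefore
\[
\frac{1}{(x+a)x(x-1)} \;=\; \frac{1+O(\alpha)}{(x+1)x(x-1)} .
\]

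Finally I would combine the two estimates. Substituting into $R(x)=1+C/Q(x)$ from (\ref{R expansion}) gives
\[
R(x) = 1 - \frac{\bigl[2+O(\alpha)+O(\epsilon)\bigr]\epsilon\bigl(1+O(\alpha)\bigr)}{(x+1)x(x-1)}
     = 1 - \frac{\bigl[2+O(\alpha)+O(\epsilon)\bigr]\epsilon}{(x+1)x(x-1)},
\]
where in the last step I absorb the extra $1+O(\alpha)$ factor into the $O(\alpha)$ term of the numerator (this is valid because the leading constant $2$ remains unaffected to leading order). This is exactly the claimed expression.

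No real obstacle arises here: the only thing to be careful about is that the implied constants in $O(\alpha)$ and $O(\epsilon)$ remain uniform, which is guaranteed by the hypotheses $|\alpha|<1/5$ and $|\epsilon|\leq 1/10$ from Lemma \ref{R = 1+C/Q} together with the restriction $|x|\geq 2$, which keeps both $|x+1|$ and $|(x+1)x(x-1)|$ bounded below by explicit constants.
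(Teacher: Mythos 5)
Your proof is correct and follows essentially the same outline as the paper's: start from the explicit formula $R(x)=1+C/Q(x)$ of Lemma \ref{R = 1+C/Q}, replace $C$ by its leading-order expression, and convert the factor $1/(x+a)$ to $(1+O(\alpha))/(x+1)$ using $|x+1|\geq 1$ on $\{|x|\geq 2\}$. The only cosmetic difference is in the denominator step: the paper writes $1/(x+a)=[1+(1-a)/(x+a)]/(x+1)$ and checks the extreme values at $x=\pm 2$, whereas you use a geometric-series bound on $1/(1+\alpha/(x+1))$; these are the same algebraic object and both yield the needed $1+O(\alpha)$ factor.
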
 

\begin{proof} 
	By Lemma \ref{R = 1+C/Q}, 
\begin{align*}
R(x) 
	&= 1+ \frac {  -(1+a) \epsilon + O(\epsilon^2) } 
	 { (x+a)x(x-1)} \\
	&= 1- \frac { (1+a) \epsilon + O(\epsilon^2) } 
	 { (x+1)x(x-1)}  \cdot  \Big(\frac{x+a+1-a}{x+a} \Big)  \\
	&= 1- \frac { (1+a) \epsilon + O(\epsilon^2) } 
	 { (x+1)x(x-1)}  \cdot \Big(1 + \frac{1-a}{x+a} \Big).
\end{align*} 	
	The maximum absolute value of $ 1+ (1-a)/(x+a)$ on 
	$\{x:|x| \geq 2 \}$ is attained at either $x=2$ (if $a>1$)  or $x=-2$ (if $a<1$), 
	where the values are, respectively, 
	\[ 1 + \frac {1-a}{2+a} = \frac 3{2+a} = 1  - \frac  \alpha3  + O(\alpha^2), \]
	\[ 1 + \frac {1-a}{-2+a} = \frac {-1}{2-a} = -1  - \alpha + O(\alpha^2).\]
	In either case we deduce that the maximum  absolute value
	is $1 + O(\alpha)$. Thus 
\begin{align*}
R(x) 
	&= 1- \frac { (2+\alpha) \epsilon [1+O(\alpha)  + O(\epsilon^2)] } 
	 { (x+1)x(x-1)}  \\
	&= 1- \frac { (2+O(\alpha)) \epsilon  + O(\epsilon^2) } 
	 { (x+1)x(x-1)}.  \qedhere   
\end{align*} 	
\end{proof} 

Next we consider the distortion near the perturbed points. 
There are separate estimates for each sub-interval $[0,1]$ and $[-a,0]$. 

\begin{lemma} \label{R est 1}
If  $\epsilon >0$, $a =1+\alpha$   and $x \in [0,1]$,  then
\[ 
R(x) \geq   1 +  ( 4 - \frac 1 {a^2}) \epsilon + O(\epsilon^2)
	\geq 1+ [3 +O(\alpha)+O(\epsilon)]\epsilon  > 1.
\]
\end{lemma}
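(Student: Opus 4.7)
The plan is to factor $R(x)$ into three elementary rational pieces and exploit an algebraic cancellation by grouping two of them together. Write
\[ R(x) = R_1(x) \cdot R_2(x) R_3(x), \]
where $R_1(x) = (x+a-\delta)/(x+a)$, $R_2(x) = (x+\delta+\epsilon)/x$, and $R_3(x) = (x-1-\epsilon)/(x-1)$. Bounding each factor separately is doomed because $R_2$ blows up at $x=0$ and $R_3$ blows up at $x=1$; pairing $R_2$ with $R_3$ removes the obstruction.

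For $R_1$: since $x+a \geq a$ on $[0,1]$ and $\delta > 0$, we get $R_1(x) = 1 - \delta/(x+a) \geq 1 - \delta/a$ uniformly on $[0,1]$.

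For $R_2 R_3$: on $(0,1)$ rewrite
\[ R_2(x) R_3(x) = \frac{(x+\delta+\epsilon)(1+\epsilon-x)}{x(1-x)}, \]
and compute the numerator of $R_2 R_3 - 1$ algebraically:
\[ (x+\delta+\epsilon)(1+\epsilon-x) - x(1-x) = \delta(1-x) + \epsilon(1+\delta+\epsilon) \geq \epsilon. \]
Both summands on the right are nonnegative for $x \in [0,1]$, $\delta, \epsilon > 0$, so the numerator is at least $\epsilon$. Combined with the elementary bound $x(1-x) \leq 1/4$, this yields $R_2(x) R_3(x) \geq 1 + 4\epsilon$ uniformly on $(0,1)$.

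Multiplying these two uniform lower bounds and substituting $\delta = \epsilon/a + O(\epsilon^2)$ from Lemma \ref{R = 1+C/Q}, one obtains
\[ R(x) \geq \left(1 - \frac{\delta}{a}\right)(1 + 4\epsilon) = 1 + 4\epsilon - \frac{\delta}{a} + O(\epsilon^2) = 1 + \left(4 - \frac{1}{a^2}\right)\epsilon + O(\epsilon^2). \]
The second inequality in the statement follows by Taylor expanding $4 - 1/(1+\alpha)^2 = 3 + 2\alpha + O(\alpha^2) = 3 + O(\alpha)$.

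The one delicate step is the algebraic identity for the numerator of $R_2 R_3 - 1$, which makes both its positivity and the clean lower bound $\epsilon$ transparent despite neither $R_2$ nor $R_3$ being individually bounded on $[0,1]$. Once that identity is in hand the rest is just two elementary uniform bounds and a first-order expansion in $\epsilon$.
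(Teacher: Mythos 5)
Your proof is correct and follows essentially the same approach as the paper: both bound the first factor $(x+a-\delta)/(x+a)$ uniformly below by $1-\delta/a$, then show the product of the remaining two factors is at least $1+4\epsilon$ via the bound $x(1-x)\le 1/4$. The only difference is a minor one in execution: your algebraic identity $\delta(1-x)+\epsilon(1+\delta+\epsilon)$ for the numerator of $R_2R_3-1$ gives the bound in one clean step, whereas the paper first drops the nonnegative $\delta/x$ term and then minimizes $(1+\epsilon/x)(1+\epsilon/(1-x))$ at $x=1/2$ by symmetry.
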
 

\begin{proof}
For $x  \in[0,1]$, $R(x) \geq 1$ and  we can write
\begin{align*}
 R(x)
&= ( 1- \frac {\delta}{ x+a}) ( 1+ \frac {\epsilon+\delta}{x}) (1- \frac{\epsilon}{x-1}) \\
&= ( 1- \frac {\delta}{ x+a}) ( 1+ \frac {\epsilon+\delta}{x}) (1+ \frac{\epsilon}{1-x}),
\end{align*}
and all three terms in the last line are positive for $0\leq x \leq 1$.
Thus we make the second term smaller 
by subtracting the  positive term $\delta/x$, and so
\begin{align*}
 R(x)
&\geq ( 1- \frac {\delta}{ a}) ( 1+ \frac {\epsilon}{x}) (1+ \frac{\epsilon}{1-x}).
\end{align*}
By symmetry, the function on the right  takes a minimum at the midpoint of the two poles, 
	i.e., at $x=1/2$ (one can also verify this by differentiating).
Thus on $[0,1]$, $R$ is bounded below by
\begin{align*}
( 1- \frac {\delta}{ a})
	( 1+ \frac {\epsilon}{1/2}) (1+ \frac{\epsilon}{1-(1/2)})
	&= (1- \frac {\delta}{ a}) ( 1+2\epsilon) (1 +2\epsilon )\\
	&= (1- \epsilon a^{-2}  +O(\epsilon^2 ))( 1+  4\epsilon+4\epsilon^2 ) \\
	&=  1 + ( 4 - \frac 1{a^2})  \epsilon  +O(\epsilon^2) . \qedhere
\end{align*}
\end{proof}

\begin{lemma} \label{R est 2}
If  $\epsilon >0$, $a =1+\alpha$ and $x\in [-a,0]$  then
\[ 
R(x) \leq   1 -  (\frac 4{a^2} -1) \epsilon + O(\epsilon^2) 
	\leq 1- (3 +O(\alpha)+O(\epsilon)) \epsilon < 1.
\]
\end{lemma}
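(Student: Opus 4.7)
The plan is to mirror the proof of Lemma~\ref{R est 1} almost verbatim, with the roles of the perturbations suitably swapped. Substituting $u = -x$ so that $u \in [0,a]$, one factors
\[ R(-u) = F_1(u)\,F_2(u)\,F_3(u), \quad F_1 = 1 - \frac{\delta}{a-u},\ \ F_2 = 1 - \frac{\delta+\epsilon}{u},\ \ F_3 = 1 + \frac{\epsilon}{u+1}.\]

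The first step is to upper bound $F_2$ by dropping its strictly positive $\epsilon/u$ term, giving $F_2(u) \leq 1 - \delta/u$. On the sub-interval $u \in [\delta+\epsilon,\, a-\delta]$ where $F_1, F_2 \geq 0$, multiplying by $F_1 \geq 0$ preserves the inequality, so
\[ F_1(u) F_2(u) \leq g(u) := \Big(1 - \frac{\delta}{a-u}\Big)\Big(1 - \frac{\delta}{u}\Big).\]
On the complementary sub-intervals $(0,\delta+\epsilon)$ and $(a-\delta, a)$, exactly one of $F_1, F_2$ is negative while $F_3 > 0$, so $R(-u) < 0$ there and the desired bound holds trivially.

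Next I would observe that $g$ is symmetric under $u \mapsto a-u$ and tends to $-\infty$ at both endpoints of $(0,a)$. A short derivative computation shows that $g'(u)=0$ on $(0,a)$ reduces to $(a-2u)(a-\delta)=0$, so $u=a/2$ is the unique interior critical point and thus the global maximum:
\[ g(u) \leq g(a/2) = \Big(1 - \frac{2\delta}{a}\Big)^2, \qquad u \in (0,a).\]
Combined with the trivial bound $F_3(u) \leq 1 + \epsilon$ coming from $u+1 \geq 1$, this yields $R(x) \leq (1 - 2\delta/a)^2(1+\epsilon)$ uniformly on $[-a, 0]$.

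Finally, substituting $\delta = \epsilon/a + O(\epsilon^2)$ from Lemma~\ref{R = 1+C/Q} and expanding gives
\[ (1 - 2\delta/a)^2(1+\epsilon) = \Big(1 - \frac{4\epsilon}{a^2} + O(\epsilon^2)\Big)(1+\epsilon) = 1 - \Big(\frac{4}{a^2} - 1\Big)\epsilon + O(\epsilon^2),\]
which is the first inequality claimed. The second follows from the Taylor expansion $4/a^2 - 1 = 4/(1+\alpha)^2 - 1 = 3 + O(\alpha)$, and the strict inequality $<1$ is then automatic for $\epsilon>0$ sufficiently small. I do not anticipate any real obstacle: the key structural point, namely the symmetry of $g$ about $u=a/2$, is the exact analogue of the $x \leftrightarrow 1-x$ symmetry exploited at the midpoint in Lemma~\ref{R est 1}.
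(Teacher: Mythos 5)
Your proposal is correct and follows essentially the same route as the paper's proof: after factoring $R$ into the three linear-fractional factors, one drops the $\epsilon/x$ contribution from the middle factor, bounds the third factor by $1+\epsilon$, and exploits the symmetry of the resulting two-factor product about the midpoint $x=-a/2$ (your $u=a/2$). Your write-up is actually a bit cleaner: the substitution $u=-x$, the explicit identity $g(u)=1-\tfrac{(a-\delta)\delta}{(a-u)u}$ behind the critical-point computation, and the careful disposal of the two end sub-intervals where one factor turns negative all tidy up points where the paper's exposition contains minor slips (e.g.\ the paper's phrase about removing $\delta/x$ when it is $\epsilon/x$ that is dropped, and a stray $1-\epsilon$ in the final display).
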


\begin{proof} 
For $x \in [-a,0]$,  $R(x) < 1$ and  we have 
\begin{align*}
 R(x) &=
( 1- \frac {\delta}{ x+a}) ( 1+ \frac {\epsilon+\delta}{x}) (1- \frac{\epsilon}{x-1}) 
 = ( 1- \frac {\delta}{ x+a}) ( 1+ \frac {\epsilon+\delta}{x}) (1+ \frac{\epsilon}{1-x}). 
\end{align*}
Since $\epsilon >0$, the last term in the product is $\geq 0$ for 
$ x \in [-a,0]$, but the other two  terms may 
be positive or negative. 

For example, if the second term is negative, then $x \in (-\epsilon-\delta, 0)$. 
In this case, the first term is approximately equal to $1+\delta/a$,  and since 
$\delta $ is positive if $\epsilon$ is positive, 
this term of the product must be 
negative. Therefore the whole product is negative in this case,
so $R(x) < 0  <  1$ and the estimate in the lemma is certainly true. 

If the first term in the product is negative then $x \in (-a,-a+\delta)$, and 
the second term is approximately $1-(\epsilon+\delta)/a$. This is positive 
since $\epsilon$ and $\delta$ are both small and $a \approx 1$. Again, the 
whole product must therefore be positive in this case, and the lemma holds. Thus
we can restrict attention to the case when all three terms of the product 
are positive, i.e., $-1+\delta < x< -\epsilon - \delta$. 

In this case, 
the middle term of the product is made larger by removing the negative term $\delta/x$, 
and the third term increases by setting $x=0$. 
Thus  on $[-a,0]$ we have 
\begin{align*}
	R(x) &\leq ( 1- \frac {\delta}{ x+a}) ( 1+ \frac {\delta}{x}) ( 1 + \epsilon).
\end{align*} 
By symmetry, this function of $x$ takes its maximum at the midpoint of the two poles, 
i.e. $x=-a/2$ (we can also check by direct calculation that the 
derivative of $R$ is increasing on this interval  and it is  only zero at $x = -a/2$, 
so this point is the global minimum of $R$  over this interval).
Thus  for $x \in (-a+\delta, -\epsilon - \delta)$  we deduce that 
\begin{align*}
 R(x) 
  &\leq ( 1- \frac {\delta}{ a/2}) ( 1+ \frac {\delta}{-a/2}) (1+\epsilon) \\
	&\leq ( 1- \frac {2\epsilon +O(\epsilon^2)}{a^2})
	      ( 1- \frac {2\epsilon+O(\epsilon^2)}{a^2}) (1- \epsilon) \\
	&\leq ( 1- \frac {2\epsilon+O(\epsilon^2)} {a^2})^2 (1- \epsilon) \\
	&\leq  1 -  (\frac 4{a^2} -1) \epsilon + O(\epsilon^2). \qedhere
\end{align*} 
\end{proof} 

For $\epsilon <0$ the calculations are almost identical, just the logic of which 
terms are positive or negative changes. We state the corresponding estimates, 
but leave the verification to the reader. 

\begin{lemma} \label{R est 3}
If  $\epsilon <0$,  $a =1+\alpha$  and $x \in [0,1]$,  then
\[ 
R(x) \leq   1 +  ( 4  - \frac 1 {a^2}) \epsilon + O(\epsilon^2)
	= 1- (3+O(\alpha)+O(\epsilon))|\epsilon| < 1.
\]
\end{lemma}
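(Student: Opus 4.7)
The plan is to mirror the proof of Lemma~\ref{R est 1}, keeping the same three-factor decomposition of $R(x)$ but carefully tracking the sign changes that occur when $\epsilon<0$. Writing $\eta=|\epsilon|$ and $\mu=|\delta|$ (so $\mu=\eta/a+O(\eta^{2})$ by Lemma~\ref{R = 1+C/Q}), the factorization on $[0,1]$ becomes
\[ R(x) = \left(1+\frac{\mu}{x+a}\right)\left(1-\frac{\eta+\mu}{x}\right)\left(1-\frac{\eta}{1-x}\right). \]
Unlike the $\epsilon>0$ case, the two perturbed roots $-\epsilon-\delta=\eta+\mu$ and $1+\epsilon=1-\eta$ both lie strictly \emph{inside} $[0,1]$, so the second and third factors actually change sign on $[0,1]$, and $R(x)$ itself vanishes at two points of $[0,1]$. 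This is the main source of bookkeeping.

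First I would split $[0,1]$ into the interval $J=[\eta+\mu,\,1-\eta]$ (where all three factors are strictly positive for small $\eta$) and its complement. On $[0,\eta+\mu)$ the second factor is $\leq 0$ while the first and third are strictly positive, so $R(x)\leq 0$; symmetrically, on $(1-\eta,1]$ the third factor is $\leq 0$ while the other two are strictly positive, again giving $R(x)\leq 0$. In both sub-intervals $R(x)\leq 0$, which is already well below the target bound $1+(4-1/a^{2})\epsilon+O(\epsilon^{2})$, so nothing further is needed there.

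Next, on $J$, I would bound each factor from above: the first by $1+\mu/a$ (since $x+a\geq a$), the second by $1-\eta/x$ (dropping the positive quantity $\mu/x$ only enlarges what is being subtracted, so the factor gets \emph{larger}), and the third kept as $1-\eta/(1-x)$. This yields
\[ R(x)\leq \left(1+\frac{\mu}{a}\right)\left(1-\frac{\eta}{x}\right)\left(1-\frac{\eta}{1-x}\right). \]
The two-factor product $(1-\eta/x)(1-\eta/(1-x))$ is symmetric under $x\mapsto 1-x$ and a direct derivative computation shows its unique critical point in $J$ is at $x=1/2$; since the product tends to $0$ at the endpoints of $J$, the critical point is a maximum with value $(1-2\eta)^{2}$. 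This plays the role dual to the minimization at $x=1/2$ used in Lemma~\ref{R est 1}.

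Expanding $(1+\mu/a)(1-2\eta)^{2}$ and using $\mu=\eta/a+O(\eta^{2})$ gives
\[ R(x)\leq 1-\Bigl(4-\frac{1}{a^{2}}\Bigr)\eta+O(\eta^{2}), \]
which becomes the desired inequality after substituting $\eta=-\epsilon$. The only real obstacle is the sign bookkeeping in the first paragraph: one must verify that $[0,\eta+\mu]$ and $[1-\eta,1]$ are disjoint and that the first factor $1+\mu/(x+a)$ stays positive throughout $[0,1]$, so that in each complementary sub-interval exactly one of the three factors is non-positive (rather than two or zero). Both checks are immediate for $|\epsilon|$ sufficiently small and $a$ close to $1$.
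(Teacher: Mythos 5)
Your proof is correct and follows precisely the route the paper has in mind: the paper simply asserts that the $\epsilon<0$ case is ``almost identical, just the logic of which terms are positive or negative changes'' and leaves it to the reader, and your argument is the expected dualization of Lemmas~\ref{R est 1} and~\ref{R est 2} (split $[0,1]$ by the sign pattern of the three factors, dispose trivially of the sub-intervals where $R\leq 0$, then bound each factor and use the symmetry of the remaining two-factor product to extremize at $x=1/2$). Two very minor wording slips that do not affect validity: dropping $\mu/x$ \emph{shrinks} (not ``enlarges'') what is subtracted, which is why the factor grows; and the product $(1-\eta/x)(1-\eta/(1-x))$ vanishes only at the right endpoint $1-\eta$ of $J$ (at the left endpoint $\eta+\mu$ it is positive), but the sign of the derivative still shows the unique interior critical point $x=1/2$ is the maximum on $J$, so the conclusion stands.
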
 

\begin{lemma} \label{R est 4}
If  $\epsilon <0$, $a =1+\alpha$ and $x\in [-a,0]$,  then
\[ 
R(x) \geq   1 -  (\frac 4{a^2} -1) \epsilon + O(\epsilon^2)
	  = 1 + (3 +O(\alpha) + O(\epsilon)) |\epsilon| > 1.
\]
\end{lemma}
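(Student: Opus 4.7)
The plan is to mirror the argument for Lemma \ref{R est 2}, tracking how the signs reverse once $\epsilon$ (and hence $\delta$) become negative. By Lemma \ref{R = 1+C/Q}, for $|\epsilon|$ small we have $\delta = \epsilon/a + O(\epsilon^2)$, so $\delta < 0$ as well. I would start from the same factorization
\[
R(x) = \Bigl(1 - \frac{\delta}{x+a}\Bigr)\Bigl(1 + \frac{\epsilon + \delta}{x}\Bigr)\Bigl(1 + \frac{\epsilon}{1-x}\Bigr),
\]
and first verify that on $[-a,0]$ all three factors are strictly positive: the first because $x+a > 0$ and $-\delta > 0$; the second because $x < 0$ and $\epsilon + \delta < 0$ make $(\epsilon+\delta)/x > 0$, and its only zero, at $x = -\epsilon-\delta > 0$, lies outside $[-a,0]$; the third because $1 + \epsilon/(1-x) \geq 1 + \epsilon > 0$ on $[-a,0]$ when $|\epsilon|$ is small. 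Unlike in Lemma \ref{R est 2}, no case split is required here, since none of the factors changes sign on $[-a,0]$. Moreover, the first two factors exceed $1$ while the third is less than $1$.

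To produce a lower bound I would discard a nonnegative contribution from each factor I want to decrease. Since $\epsilon/x > 0$ for $x < 0$, dropping this term from the second factor gives $1 + (\epsilon + \delta)/x \geq 1 + \delta/x$. For the third factor, $1 + \epsilon/(1-x)$ is an increasing function of $1-x$ when $\epsilon < 0$, so its minimum on $[-a,0]$ occurs at $x = 0$ and equals $1+\epsilon$. Thus
\[
R(x) \;\geq\; \Bigl(1 - \frac{\delta}{x+a}\Bigr)\Bigl(1 + \frac{\delta}{x}\Bigr)(1+\epsilon).
\]
Writing $x = -a/2 + u$, the product of the first two factors becomes
\[
\Bigl(1 + \frac{|\delta|}{a/2 + u}\Bigr)\Bigl(1 + \frac{|\delta|}{a/2 - u}\Bigr),
\]
which is symmetric under $u \mapsto -u$ and blows up at each endpoint $x = -a, 0$. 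Hence it attains its minimum on $[-a,0]$ at the midpoint $x = -a/2$, where it equals $(1 - 2\delta/a)^2$, giving $R(x) \geq (1 - 2\delta/a)^2(1+\epsilon)$.

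Finally, using $\delta = \epsilon/a + O(\epsilon^2)$ and expanding,
\[
(1 - 2\delta/a)^2(1+\epsilon) = \Bigl(1 - \frac{4\epsilon}{a^2} + O(\epsilon^2)\Bigr)(1+\epsilon) = 1 - \Bigl(\frac{4}{a^2} - 1\Bigr)\epsilon + O(\epsilon^2).
\]
Since $-\epsilon = |\epsilon|$, and since $4/a^2 - 1 = 3 + O(\alpha)$ when $a = 1+\alpha$ with $|\alpha| < 1/5$, this yields the stated bound $R(x) \geq 1 + (3 + O(\alpha) + O(\epsilon))|\epsilon| > 1$. The main obstacle is simply the sign bookkeeping required when choosing which factors to enlarge or shrink; the calculational content is identical to Lemma \ref{R est 2}, consistent with the authors' remark that for $\epsilon < 0$ the computations are ``almost identical.''
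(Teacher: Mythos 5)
The paper leaves Lemma~\ref{R est 4} unproved, remarking only that the $\epsilon<0$ calculations are ``almost identical'' to those for Lemma~\ref{R est 2}; your proposal correctly supplies this omitted verification and matches the intended approach. Your key observation that no case split is needed for $\epsilon<0$ --- because all three factors of $R(x)$ are strictly positive on $(-a,0)$, with the first two exceeding $1$ and the third less than $1$ --- is a genuine (and welcome) simplification over a naive mirroring of the $\epsilon>0$ argument, where the author had to dispose of two sign-change cases before reaching the main estimate; the subsequent reduction to $(1-\delta/(x+a))(1+\delta/x)(1+\epsilon)$, the symmetry argument locating the minimum at $x=-a/2$ (a minimum now rather than a maximum, since $\delta<0$ makes the paired factors blow up to $+\infty$ at the poles), and the expansion using $\delta=\epsilon/a+O(\epsilon^2)$ are all correct and give exactly the stated bound.
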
 

These lemmas show that  we  can take 
$|R(x) -1| \geq  \lambda |\epsilon|$ on $[-a,1]$ for 
any $\lambda < 3$, by taking $|\epsilon|$ sufficiently small, and 
$a$ sufficiently close to $1$.

We will not use the estimates derived above in precisely the form they were 
given. Instead, we will use rescaled versions, which we now state explicitly. 
Recall that we chose $n$ of the form $n=8m+1$, so that there were $n-1=8m  = 2N$ 
nodal intervals $\{I^n_j\}_1^{n-1}$, and that we defined intervals $\{G^n_k\}_1^N$
by taking groups of four adjacent nodal intervals. More precisely, 
$G^n_k = I^n_{4k-3} \cup I^n_{4k-2} \cup I^n_{4k-1} \cup I^n_{4k}$. Moreover, 
since $I^n_k = [r^n_k, r^n_{k+1}]$, 
where $\{r^n_j\}_1^n$ are the roots of $T_n$,
we have  $G^n_k = [r^n_{4k-3}, r^n_{k+1}]$, and the 
three interior roots of $G^n_k$ are $r^n_{4k-2}$, $r^n_{4k-1}$  and 
$r^n_{4k}$. 
See Figure \ref{Defn_Gnk}. 

\begin{figure}[htb]
\centerline{ 
\includegraphics[height=1.5in]{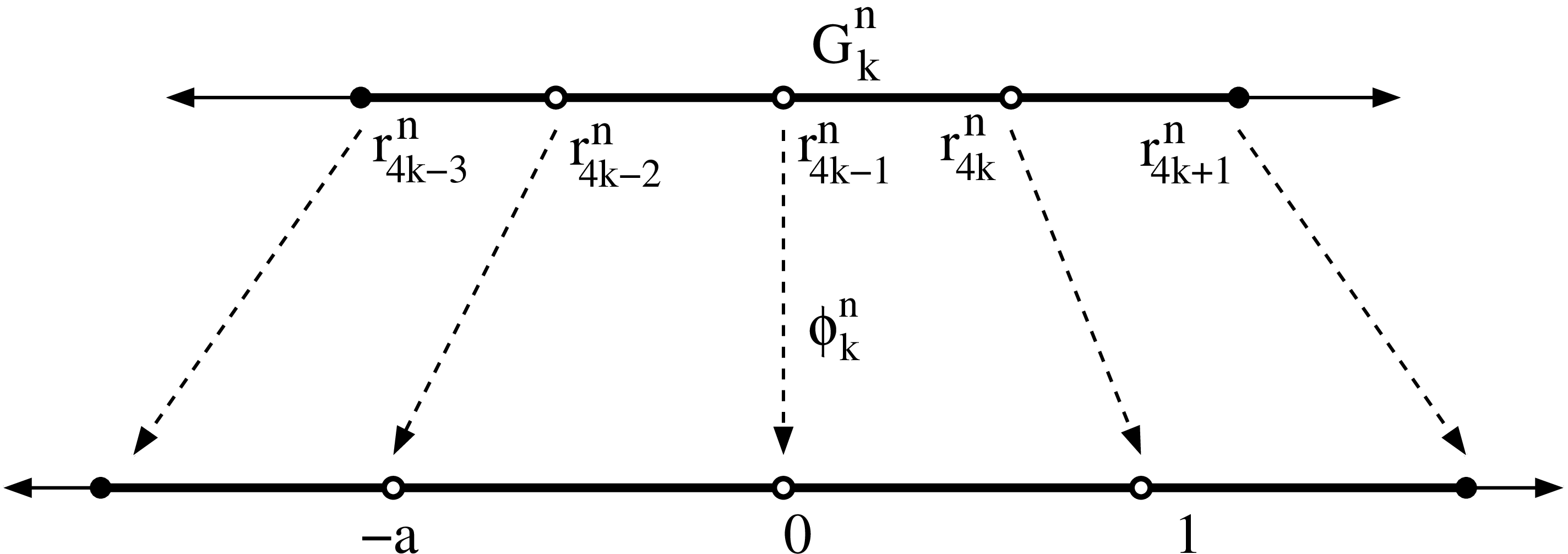}  
}
\caption{ \label{Defn_Gnk}
The definition of $G^n_k$ and the  linear map $\phi^n_k$. 
The white dots are the three roots interior to $G^n_k$; these map 
to $-a,0,1$ under $\phi^n_k$. 
}
\end{figure}

Define a linear map $\phi^n_k: \reals \to\reals$ by the 
conditions $\phi^n_k(r^n_{4k-1})=0$ and $\phi^n_k(r^n_{4k})=1$. 
In other words, we map the center and right interior roots of $G^n_k$ to 
$0$ and $1$ respectively. By Corollary \ref{K chains}, if $G^n_k$ is not too close 
to either $-1$ or $1$, then the four nodal intervals it contains are 
all approximately the same size and so  we get 
\[s=\phi^n_k(r^n_{4k-3}) \approx -2, \quad 
-a=\phi^n_k(r^n_{4k-2}) \approx -1, \quad  
t=\phi^n_k(r^n_{4k+1}) \approx 2.\]

Using the map $\phi^n_k$, a polynomial $p$ with  three roots inside $G^n_k$ 
corresponds to polynomial $q$ with three roots in $[s,t]$
by  $p(x) = q(\phi^n_k(x))/ |(\phi^n_k)'|^3$. If we take the ratio  of 
two such polynomials $p_1, p_2$, then the derivative factor cancels and we see 
that 
\[ \frac {p_1(x)}{p_2(x)} = \frac {q_1(\phi^n_k(x))}{q_2(\phi^n_k(x))} .\]
Thus the distortion function  $R^n_k$ for perturbations on $G^n_k$ is just a 
linear rescaling of the  3-point distortion function $R$  defined earlier in this section, 
i.e., $R^n_k(x) = R(\phi^n_k(x))$. With this, we can restate the results above 
for perturbations of roots in $G^n_k$. For example, the following are the 
rescaled versions of Corollaries  \ref{R est} and \ref{R est far}.

\begin{cor}  \label{R est rescaled}
If we perturb the interior roots of $G^n_k$ by a factor of $\epsilon$, and 
$|\epsilon|$ small enough,  then 
\begin{align*}
	R(x) \geq 1, &\text{ if }   \epsilon >0 \text{  and  } x \in (-\infty , r^n_{4k-2}] 
		       \cup [r^n_{4k-1},r^n_{4k}],\\
	R(x) \geq 1, &\text{ if }   \epsilon < 0 \text{  and  } 
		   x \in (r^n_{4k-2},r^n_{4k-1}] \cup [r^n_{4k},\infty), \\
	R(x) \leq 1, &\text{ if }   \epsilon <0 \text{  and  } 
	x \in (-\infty , r^n_{4k-2}] \cup [r^n_{4k-1},r^n_{4k}], \\
	R(x) \leq 1,  &\text{ if }   \epsilon > 0 \text{  and  }
	x \in (r^n_{4k-2},r^n_{4k-1}] \cup [r^n_{4k},\infty).
\end{align*} 
\end{cor}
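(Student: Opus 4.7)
The plan is to deduce this corollary directly from Corollary \ref{R est} by transferring the four sign inequalities through the linear change of variable $\phi^n_k$ introduced just before the statement. Since $\phi^n_k$ is an orientation-preserving affine map sending $r^n_{4k-2}$, $r^n_{4k-1}$, $r^n_{4k}$ to $-a$, $0$, $1$ respectively, it sends the four rescaled test sets $(-\infty, r^n_{4k-2}]$, $(r^n_{4k-2}, r^n_{4k-1}]$, $[r^n_{4k-1}, r^n_{4k}]$, $[r^n_{4k}, \infty)$ bijectively (and in order) to the model sets $(-\infty, -a]$, $(-a, 0]$, $[0, 1]$, $[1, \infty)$ appearing in Corollary \ref{R est}. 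Combined with the identity $R^n_k(x) = R(\phi^n_k(x))$ noted in the text, each of the four inequalities in Corollary \ref{R est} transfers verbatim to the corresponding inequality in the rescaled statement.

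The only nontrivial bookkeeping is to verify that the hypotheses of the model case actually apply. Corollary \ref{R est} requires $|\epsilon|$ sufficiently small and, implicitly via Lemma \ref{R = 1+C/Q}, the parameter $a = 1 + \alpha$ to satisfy $|\alpha| < 1/5$. In our setting $a$ is essentially the ratio $|I^n_{4k-2}|/|I^n_{4k-1}|$, which by Corollary \ref{K chains} can be forced within $1 + \eta$ of $1$ (for any prescribed $\eta < 1/5$) by taking $K$ large enough and restricting to $K < k \leq N - K$. This is exactly the range in which $\Gint^n_k$ and $f_k$, $g_k$ will actually be analyzed; for $k$ within $K$ of the boundary there is nothing to prove since $f_k, g_k$ were defined as the identity there. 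Also, because the perturbation parameter $\epsilon$ is defined relative to $|I^n_{4k-1}|$, which $\phi^n_k$ normalizes to a unit interval, the same $\epsilon$ serves in both pictures.

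There is no real obstacle to overcome; the work was already done in Section \ref{3-pt sec} proper. The only point to emphasize is uniformity: once $\eta$ is fixed by Corollary \ref{K chains}, the threshold on $|\epsilon|$ coming from Corollary \ref{R est} depends only on $\eta$ and not on $n$ or $k$, because the model computation is performed in a fixed ambient configuration with $a \in (1 - \eta, 1 + \eta)$. Thus a single smallness condition on $|\epsilon|$, uniform in $n$ and in admissible $k$, suffices to conclude.
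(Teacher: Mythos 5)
Your proof is correct and follows exactly the paper's own route: Corollary~\ref{R est rescaled} is read off from Corollary~\ref{R est} via the rescaling identity $R^n_k(x) = R(\phi^n_k(x))$, using that $\phi^n_k$ is an orientation-preserving affine map carrying the four test sets to the model sets $(-\infty,-a]$, $(-a,0]$, $[0,1]$, $[1,\infty)$ in order. Your added remarks about the uniform smallness threshold on $|\epsilon|$ and the constraint on $a$ via Corollary~\ref{K chains} are sound and match the paper's surrounding text, though the aside about $f_k, g_k$ being the identity near the boundary is tangential since the corollary is a statement about $R$ alone.
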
 

\begin{cor} \label{R est far rescaled}
	Suppose $a = 1+\alpha$ with $|\alpha|< 1/5$. For $ x \in [-1,1] \setminus
G^n_k$,  let 
\[
d= \frac{|x-r^n_{4k-2}|}{|r^n_{4k-1} - r^n_{4k}|} 
 = {|x-r^n_{4k-2}|}  \cdot |(\phi^n_k)'|
\]
be the distance between $x$ and the center 
root of $G^n_k$, normalized  by  $|I^n_{4k-1}|$. Then 
\[R(x) 
= 1 -  \frac { [2+O(\alpha)+O(\epsilon)] \epsilon   } 
 { (d+1)d(d-1) }  .
	 \]
\end{cor}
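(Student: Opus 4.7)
The plan is to obtain this corollary as the image of Corollary \ref{R est far} under the linear rescaling $\phi^n_k$ discussed just before the statement. Recall that $R^n_k(x) = R(\phi^n_k(x))$, where $R$ is the ``standard'' 3-point distortion associated with $\{-a,0,1\}\mapsto\{-a+\delta,-\delta-\epsilon,1+\epsilon\}$ and $\phi^n_k$ is the affine map sending $r^n_{4k-1}\mapsto 0$ and $r^n_{4k}\mapsto 1$, so that $|(\phi^n_k)'| = 1/|I^n_{4k-1}|$. In this coordinate, the quantity $d$ in the statement is (up to sign) just $\phi^n_k(x)$, the normalized signed position of $x$ relative to the center root.

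First I would substitute $y = \phi^n_k(x)$ directly into the expansion
\[ R(y) \;=\; 1 \;-\; \frac{[2 + O(\alpha) + O(\epsilon)]\,\epsilon}{(y+1)\,y\,(y-1)} \]
from Corollary \ref{R est far}. Since $\phi^n_k(r^n_{4k-1}) = 0$ and the nodal-interval rescaling contributes the factor $|(\phi^n_k)'|=1/|I^n_{4k-1}|$ to $|x-r^n_{4k-1}|$, the denominator $(y+1)y(y-1)$ transplants verbatim into $(d+1)d(d-1)$. The constant in the numerator is unaffected, since distortion is a ratio of polynomials and transforms as a function, not a density.

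Next I need to confirm the range hypothesis $|y|\ge 2$ required for applying Corollary \ref{R est far}. For $x\in [-1,1]\setminus G^n_k$, the image $y=\phi^n_k(x)$ lies outside $\phi^n_k(G^n_k) = [s,t]$, where $s = \phi^n_k(r^n_{4k-3})$ and $t = \phi^n_k(r^n_{4k+1})$. By Corollary \ref{K chains}, provided $k$ is bounded away from $1$ and $N$ (which is already assumed throughout this part of the paper), the four nodal intervals making up $G^n_k$ all have lengths within a factor $1+O(\alpha)$ of $|I^n_{4k-1}|$, so $s = -2+O(\alpha)$ and $t = 2+O(\alpha)$. Therefore $|y|\ge 2 - O(\alpha)$, and this small slack is absorbed into the $O(\alpha)$ correction already present in the leading coefficient of the numerator.

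The only real bookkeeping ``obstacle'' is cosmetic: reconciling the description of $d$ as a distance with its appearance in the signed expression $(d+1)d(d-1)$, i.e.\ fixing the sign convention that remembers on which side of the center root $x$ lies. Once that is settled, there is no further computation — the corollary is a direct affine rescaling of the unscaled version already proven.
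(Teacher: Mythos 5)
Your proposal is correct and is essentially the paper's own argument: the paper presents this corollary as an immediate consequence of the rescaling discussion preceding it (the observation that a distortion ratio $R$ transforms as $R^n_k(x)=R(\phi^n_k(x))$ because the Jacobian factors cancel), and you have simply written out that rescaling, adding the useful checks that the range condition $|y|\ge 2$ from Corollary \ref{R est far} holds only up to $O(\alpha)$ slack (absorbed into the existing error term) and that the sign convention for $d$ in $(d+1)d(d-1)$ must be read as signed rather than as a pure distance. You also implicitly correct a small inconsistency in the statement itself — the prose says ``center root'' while the displayed formula for $d$ references $r^n_{4k-2}$, which the paper earlier labels the \emph{left} interior root — by interpreting $d$ as $|\phi^n_k(x)|$, the normalized distance to $r^n_{4k-1}$; this reading is what makes the denominator $(d+1)d(d-1)$ line up with $(y+1)y(y-1)$ from Corollary \ref{R est far}, so it is the right one.
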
 

Similarly,  Lemmas \ref{R est 1} to \ref{R est 4}
can be restated for perturbations  of the interior roots of $G^n_k$ 
by leaving the 
estimate for $R$ exactly the same  as before, 
and simply replacing the   intervals for $x$  by  new intervals 
obtained by replacing the  positions $\{-2, -a, 0, 1, 2\}$
by  the   points
$\{r^n_{4k-3}, r^n_{4k-2}$, $r^n_{4k-1}, r^n_{4k}, r^n_{4k+1} \}$.  
The  leftmost and rightmost points do not correspond exactly to 
$-2$ and $2$ under $\phi^n_k$, but 
only Corollary \ref{R est 0} makes use of these points, and in this 
case the corresponding  image points are so close that the estimate still
holds in the rescaled case.

\section{Bounding the extreme values}  \label{extreme sec} 

In  this  section, we show that small 
perturbations of the roots do not increase the extreme
values very much. This is needed in order  to show that our polynomial 
approximants can be taken to  be Lipschitz if the function $f$ being 
approximated is Lipschitz (as claimed in Theorem \ref{weier++}).
In a later section, we will  also use these estimates to bound
the size of the set where this perturbed Chebyshev polynomial 
is  close to zero, in order to prove our  approximants
have derivatives that diverge 
pointwise almost everywhere. See Lemma \ref{small set} and 
Corollary \ref{diverges}.

We first prove a special case ($n=3$) of the
Min-Max property of the Chebyshev polynomials, that 
was mentioned in Section \ref{length sec}.

\begin{lemma} \label{cubic}
Suppose $ r_1, r_2, r_3 \in I= [-1,1]$ and $p(x) = (x-r_1)(x-r_2)(x-r_3)$. 
Then $\max_I |p| \geq 1/4$ and the maximum is minimized by taking $r_1 =0$ and 
$r_2, r_3 = \pm \sqrt{3}/2$ (in other words, $p$ is a multiple of the 
Chebyshev polynomial $T_3$). 
\end{lemma}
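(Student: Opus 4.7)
The plan is to invoke the classical Chebyshev equioscillation argument specialized to cubics. First I would recall that $T_3(x) = 4x^3 - 3x$, so $p_0 := T_3/4$ is monic, has roots $0, \pm\sqrt{3}/2 \in I$, and satisfies $\max_I |p_0| = 1/4$ because $\lVert T_3\rVert_I = 1$. The key property to exploit is the equioscillation of $T_3$ at the four extreme points $x_0 = 1$, $x_1 = 1/2$, $x_2 = -1/2$, $x_3 = -1$, where $T_3(x_k) = (-1)^k$.

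For the lower bound I argue by contradiction. Suppose $p$ is a monic cubic with $\max_I|p| < 1/4$. Set $q = p_0 - p$. Since the leading $x^3$ terms cancel, $\deg q \leq 2$. At each extreme point, $q(x_k) = (-1)^k/4 - p(x_k)$, and the strict inequality $|p(x_k)| < 1/4$ forces $q(x_k)$ to have the same sign as $(-1)^k$. Thus $q$ strictly alternates sign at four consecutive points, which by the intermediate value theorem produces at least three distinct zeros of $q$ in $I$, contradicting $\deg q \leq 2$. Hence $\max_I|p| \geq 1/4$.

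Since $p_0$ attains this lower bound and has all three of its roots in $I$, it is the asserted minimizer within the constrained class. Note that the hypothesis $r_1, r_2, r_3 \in I$ is not used in the lower bound itself; it matters only in confirming that $p_0$ is a legitimate competitor. I do not foresee any serious obstacle, since everything reduces to the sign-alternation trick; the only small point of care is that the strict inequality $|p(x_k)| < 1/4$ (rather than a non-strict one) is what makes the three zeros of $q$ genuinely distinct. If one wished to establish uniqueness of the minimizer — which the statement does not require — one would handle the equality case at some $x_k$ by counting zeros of $q$ with multiplicity, a routine but unneeded refinement.
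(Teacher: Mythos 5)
Your proof is correct, and it takes a genuinely different route from the paper's. You use the classical Chebyshev equioscillation argument: set $q = T_3/4 - p$, observe $\deg q \le 2$, and exploit the sign alternation of $T_3$ at the four extremal points $1, 1/2, -1/2, -1$ to manufacture three distinct zeros of $q$, a contradiction. This is the standard textbook proof of the Min-Max property and generalizes verbatim to any degree; moreover it establishes the lower bound for \emph{all} monic cubics, with the hypothesis $r_1, r_2, r_3 \in I$ used only to certify that $T_3/4$ is an admissible competitor — a point you correctly flag. The paper instead argues by symmetrization: it takes a minimizer $q$, passes to the odd part $\tilde q(x) = \tfrac12(q(x)-q(-x))$, which is monic cubic with $\sup_I|\tilde q| \le \sup_I|q|$, reduces to the one-parameter family $x(x^2-r^2)$, and does a two-sided case analysis on $r$ versus $\sqrt{3}/2$ (evaluating at $x=1$ and $x=1/2$), with a separate short argument to dismiss the possibility of nonreal roots of $\tilde q$. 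The paper's argument is more elementary (no appeal to the alternation principle as such, just pointwise evaluations) and self-contained at the level of a cubic, but it is ad hoc and degree-specific; yours is cleaner, scales to arbitrary degree, and makes the structural reason for minimality — equioscillation at $n+1$ points — explicit.
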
 

\begin{proof}
A direct calculation shows that the given  roots  satisfy
$\max_I|p| = 1/4$, so we only need to show that this is the best possible.
If $q$ minimizes the supremum of $|p|$ over $I$ among cubic monic polynomials 
with roots in $[-1,1]$ (a minimum exits by compactness), 
then let   $ \tilde q(x) = \frac 12 ( q(x) -q(-x))$. 
This is also cubic, monic  and  satisfies 
	\[ \sup_I \bigg| \frac {q(x) - q(-x)}{2} \bigg| \leq \sup_I | q(x) |.\]
This polynomial is clearly odd, so it has a root at $0$. 
If the other two roots were complex, they would
have to be both complex conjugates of each other
and also negatives of each other,  and 
hence both  would be zero. In this case $\tilde q(x) = x^3$ and  
$\tilde q(1) =1 > 1/4$, so this is not the minimum. 

Thus the minimizing  polynomial $q $ is  odd  with  
only  real roots:  $q(x) = x(x^2-r^2)$ for some $0<r\leq 1$.
If $ r < \sqrt{3}/2$
then $q(1) = (1-r)(1+r) = 1-r^2 > 1- 3/4 = 1/4$, so $q$ is not minimizing. 
If $r > \sqrt{3}/2$, then 
\[q(\frac{1}{2})  <  \frac 12  (\frac 1{2} -\frac {\sqrt{3}}{2} )( \frac 1{2} +\frac {\sqrt{3}}{2} )
=\frac 12 ( \frac 14  -\frac  3 4 ) = - \frac 14.\]
This is not minimizing either, so the optimal $r$ equals $\sqrt{3}/2$, as claimed. 
\end{proof}

\begin{lemma} \label{sup bound} 
For any $\lambda >1$ there is a 
 $\epsilon>0$, so that if   $\lVert y\rVert\leq \epsilon$  and 
$T_n(x,y)$ is the corresponding perturbation of the Chebyshev 
polynomial $T_n$, then 
\[    
	  \frac 1 \lambda   \sup_{G^n_m} |T_n(x)|  \leq 
	\sup_{G^n_m} | T_n(x,y)|  \leq  
	 \lambda  \sup_{G^n_m} |T_n(x)|.  \] 
\end{lemma}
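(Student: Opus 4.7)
The plan is to factor $T_n(x,y) = T_n(x) \prod_{k=1}^N R^n_k(x)$ and control the two pieces separately on $G^n_m$: the local factor $R^n_m$, and the distant product $\prod_{k \ne m} R^n_k$. The distant piece is easy: for $k \ne m$ and $x \in G^n_m$ the normalized distance $d$ from $x$ to the center root of $G^n_k$ satisfies $d \ge c|k-m|$ (using that consecutive $G^n_k$ have comparable length, via Corollary \ref{K chains}, for $m$ in the middle range $K < m < N-K$). Corollary \ref{R est far rescaled} then gives $|R^n_k(x) - 1| \le C\epsilon/|k-m|^3$, and summing with $\sum_{j \ge 1} j^{-3} < \infty$ yields
\[
\prod_{k \ne m} R^n_k(x) = 1 + O(\epsilon)
\]
uniformly on $G^n_m$, with an $O$-constant independent of $n$.

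\textbf{Local factor.} To handle the local piece, I would rescale via $\phi^n_m : G^n_m \to [s,t]$, which sends the interior roots to $-a, 0, 1$ with $a = 1 + O(\eta)$ and has $s,t \approx \mp 2$. Writing $\widetilde T$ for the rescaled $T_n$ on $[s,t]$ and $\widetilde T_y$ for the rescaled locally perturbed polynomial, Lemma \ref{R = 1+C/Q} gives
\[
\widetilde T_y(u) = \widetilde T(u) + C_m \widetilde q(u), \qquad \widetilde q(u) := \widetilde T(u)/Q(u),
\]
where $Q(u) = (u+a)u(u-1)$ and $|C_m| = O(\epsilon)$. Since $\sup_{[s,t]} |\widetilde T| = \sup_{G^n_m} |T_n| = 1$, the key task is to show $\sup_{[s,t]} |\widetilde q| \le M$ for some $M$ independent of $n$. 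Away from the zeros of $Q$, the estimate $|\widetilde q| \le 1/|Q|$ suffices. At each zero $u_0 \in \{-a, 0, 1\}$, L'Hopital gives $\widetilde q(u_0) = \widetilde T'(u_0)/Q'(u_0)$; here $|Q'(u_0)|$ is bounded below by a positive constant, and $\widetilde T'(u_0) = T_n'(r)\,|I^n_{4m-1}|$ (where $r$ is the corresponding original root) can be evaluated explicitly using $|T_n'(r^n_k)| = n/\sin(\pi(2k-1)/(2n))$ together with Lemma \ref{length estimate}, yielding an $O(1)$ bound uniform in $n$ and in the middle range of $m$.

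\textbf{Conclusion and lower bound.} Combining the two estimates gives $\sup_{G^n_m} |T_n(x,y)| \le (1+O(\epsilon))(1+O(\epsilon)M) \le \lambda$ provided $\epsilon$ is small enough. For the lower bound, evaluate at an interior critical point $x^*$ of $T_n$ in $G^n_m$, where $|T_n(x^*)| = 1$; its rescaled image $u^*$ lies near one of $\pm 1/2, \pm 3/2$, well away from the zeros of $Q$, so $|R^n_m(x^*) - 1| = |C_m/Q(u^*)| = O(\epsilon)$, and combined with the distant factor $|T_n(x^*, y)| \ge 1 - O(\epsilon) \ge 1/\lambda$. The main obstacle is proving $\sup |\widetilde q| \le M$ uniformly: one must verify that the rescaling scale $|I^n_{4m-1}|$ precisely absorbs the $O(n^2/k)$ growth of $T_n'$ at its interior Chebyshev roots, a delicate cancellation that has to hold across the full allowed range of $m$.
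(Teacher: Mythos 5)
Your decomposition into distant factors (bounded by Corollary \ref{R est far rescaled} and the $\sum j^{-3}$ tail) and a local factor matches the paper's first reduction exactly. But for the local factor your route diverges from the paper's, and it is worth noting the tradeoff.

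The paper handles the local perturbation by factoring out the cubic $h(x)=(x-r_1)(x-r_2)(x-r_3)$ over the three interior roots, invoking the Min-Max property (Lemma \ref{cubic}, rescaled to $I=G^n_m$) to get the lower bound $\sup_I|h|\ge |I|^3/32$, and then a direct term-by-term expansion of $|h-\widetilde h|$ to show $|h-\widetilde h|\le 4|I|^3\epsilon \le 128\,\epsilon\,\sup_I|h|$ pointwise. This keeps the argument entirely elementary: it uses no information about $T_n$ beyond the lengths of the nodal intervals, and in particular never needs the value of $T_n'$ at a Chebyshev root. You instead work with the identity $R^n_m = 1 + C_m/Q$ from Lemma \ref{R = 1+C/Q} and must bound $\widetilde q = \widetilde T/Q$ uniformly, which forces you into the explicit formula $|T_n'(r^n_k)|=n/\sin(\pi(2k-1)/(2n))$ (not established in the paper) and the scaling cancellation $|T_n'(r^n_{4m-1})|\cdot|I^n_{4m-1}|=O(1)$. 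That cancellation does in fact hold, and your L'Hopital evaluation at the zeros of $Q$ is correct, so the approach can be made to work; but the step from ``$\widetilde q$ takes an $O(1)$ value at the three zeros and satisfies $|\widetilde q|\le 1/|Q|$ away from them'' to ``$\sup_{[s,t]}|\widetilde q|=O(1)$'' needs an explicit argument in the transition region near each zero (you flag this yourself). The paper's Min-Max route avoids that entirely. Your lower-bound step (evaluating at a critical point $x^*$ of $T_n$, whose rescaled image lies near $\pm 1/2,\pm 3/2$ where $1/|Q|$ is bounded) is a nice alternative to the paper's symmetric treatment, and is correct as stated.
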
 

\begin{proof} 
By Corollary \ref{R est far rescaled},
the perturbations performed outside $G^n_m$ only 
multiply $T_n$ by a factor of $1 +O(\epsilon)$ inside 
$G^n_m$, so it suffices to show that the perturbations 
inside $G^n_m$ only change the supremum by a similar factor.

For brevity, let $I = G^n_m$.
Suppose we have three points $ r_1, r_2, r_3\in I$ and consider 
$h(x) = (x-r_1)(x-r_2)(x-r_3)$. 
By rescaling $[-1,1]$ to  $I$ in  Lemma \ref{cubic}, 
the supremum  of $|h|$ over  
$I$ is minimized (over choices of the roots in $I$)
by taking the degree 3 Chebyshev polynomial  on $I$,  and 
	in this case the 
minimum is $ (1/4)( |I|/2)^3 = |I|^3/32$. 
Thus   $\sup_I h \geq  |I|^3/32$.

Fix $\epsilon >0$ and let  $|\epsilon_j | \leq \epsilon |I|$ for $j=1,2,3$.
Define  a perturbation of $h$ 
\[ \widetilde h(x) = (x-r_1+\epsilon_1)(x-r_2+\epsilon_2)(x-r_3+\epsilon_3).\]
Then  if $\epsilon < 1/2$, 
\begin{align*}
	|h(x)-\widetilde h(x) | &=	|(x-r_1)(x-r_2)(x-r_3) - 
	(x-r_1 + \epsilon_1)(x-r_2+\epsilon_2)(x - r_3 + \epsilon_3)|   \\
	& \leq 
	|x-r_1||x-r_2||\epsilon_3| 
	+
	|x-r_1||x-r_3||\epsilon_2| 
	+
	|x-r_2||x-r_3||\epsilon_1| \\ 
	& \qquad \qquad 
	+
	|x-r_1||\epsilon_2||\epsilon_3| 
	+
	|x-r_2||\epsilon_1||\epsilon_3| 
	+
	|x-r_3||\epsilon_1||\epsilon_2| 
	+|\epsilon_1\epsilon_2 \epsilon_3| \\
	&  \leq 
	 3 |I|^3 \epsilon + 3 |I|^3 \epsilon^2 + |I|^3\epsilon^3 \\
	  &   \leq    4 |I|^3\epsilon = 128 (|I|^3/32) \epsilon  \\
	  &   \leq     128 \epsilon \cdot  \sup_I|h| 
	   \leq      \frac 12  \sup_I|h|, 
\end{align*} 
if $\epsilon < 1/256$.  Therefore 
$\sup_I|\widetilde h|  \leq  \sup_I |h| + \sup_I|h-\widetilde h|  \leq (1+ 128 \epsilon)\sup_I  |h|$
which is less than $\lambda \sup_I |h|$ if $\epsilon$ is small enough.
Similarly, 
$\sup_I|\widetilde h|  \geq  \sup_I |h| - \sup_I|h-\widetilde h|  \geq  \frac 1 \lambda \sup_I  |h|$
if $\epsilon$ is small enough, proving the lemma.
\end{proof}

\section{Estimating the  effect of interior perturbations} \label{interior sec}

In this section we start the proof of  Theorem \ref{delta estimate}.
We have to verify that when we perform a  3-point perturbation  
inside $G^n_m$, the integral of the Chebyshev polynomial 
over $G^n_m$ changes by a factor proportional to perturbation, 
and that  the effect on this integral 
of the perturbations in other intervals  $G^n_k$, $k \ne m$  is small
by comparison. 
Lemmas \ref{internal}  below and  Lemma \ref{external} in the next section
provide exactly these estimates. 
We end this section by verifying an earlier claim that the map defined 
by (\ref{defn f}) is a homeomorphism.

Also recall that in Section \ref{perturb sec} we introduced the notation $T_n(x,y)$ 
with $x \in [-1,1]$ and $y =(y_1, \dots, y_N) \in \reals^N$
to denote the $n$th Chebyshev polynomial $T_n(x)$ after we 
perturbed the three interior roots in  each $G^n_k$ by a factor $y_k$.
As before, let $\widetilde y_k \in \reals^N$ be the 
vector equal to $y_k$ in the $k$th coordinate and zero elsewhere. Thus 
$T_n(x,\widetilde y_k)$  corresponds to  perturbing only  the interior
roots of  $G^n_k$ and leaving all others fixed. 
Recall from (\ref{defn I}) that we defined   
\[ \Gint^n_m(y) =  \frac 1{|G^n_m|}\int_{G^n_m} T_n(x,y) dx.\]

\begin{lemma}  \label{internal} 
Suppose $t>0$,  $K \in \naturals$, $n \geq 2K$ and  $K \leq m \leq n-K$.
Suppose we  perturb the three interior roots of $G^n_m$ 
by a factor $y_m$ with $|y| \leq t$,   and we  leave 
all other roots of $T_n$ fixed. Let $ T_n(x,\widetilde y_m)$ denote 
the new polynomial obtained in this way.
Then  $|\Gint^n_k(\widetilde y_m)| \leq  \frac 76|G^n_m| $,  and 
 $\Gint^n_k(\widetilde y_m) $ is strictly  monotone. 
	Moreover, for $y_m >0$ we have  
\begin{align} \label{bound 1} 
	\Gint^n_m(\widetilde y_m) \geq 
	\Gint^n_m( 0)  + \frac{21}{20} y_m  = 
	\frac 1{|G^n_m|}   \int_{G^n_m}  T_n(x)dx  +  \frac{21}{20}   y_m 
\end{align} 
if $K$ is sufficiently large and $\epsilon$ is sufficiently small.  
For $y_m <0$ negative, we similarly get 
\begin{align} \label{bound 2} 
\Gint^n_m(\widetilde y_m) \leq
\Gint^n_m( 0)  - \frac{21}{20} y_m  = 
\frac 1{|G^n_m|}   \int_{G^n_m}  T_n(x)dx  -  \frac{21}{20}   y_m.
\end{align} 
\end{lemma}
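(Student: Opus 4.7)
The plan is to factor $T_n(x, \widetilde y_m) = T_n(x)\,R^n_m(x)$ where $R^n_m$ is the rescaled 3-point distortion function from Section \ref{3-pt sec}, and rewrite the average as
\[
\Gint^n_m(\widetilde y_m) - \Gint^n_m(0) = \frac{1}{|G^n_m|}\int_{G^n_m} T_n(x)\bigl(R^n_m(x) - 1\bigr)\, dx.
\]
I would split this integral across the four nodal intervals $I^n_{4m-3}, I^n_{4m-2}, I^n_{4m-1}, I^n_{4m}$ comprising $G^n_m$. Since $n$ is odd, $T_n$ alternates sign $+,-,+,-$ across these intervals, while by the rescaled Corollary \ref{R est rescaled} the sign of $R^n_m - 1$ alternates in the opposite pattern when $y_m > 0$, so each integrand $T_n(R^n_m - 1)$ is pointwise nonnegative. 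This already gives strict monotonicity of $\Gint^n_m(\widetilde y_m)$ in $y_m$ after noting, via Lemma \ref{monotone}, that $T_n(x,\widetilde y_m)$ is itself pointwise strictly monotone in $y_m$.

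For the quantitative bound I combine two sources. On the two central intervals $I^n_{4m-2}$ and $I^n_{4m-1}$ the rescaled forms of Lemmas \ref{R est 1}--\ref{R est 4} give $|R^n_m - 1| \geq (3 + O(\alpha) + O(y_m))|y_m|$, while on the two outer intervals the rescaled Corollary \ref{R est 0} gives $|R^n_m - 1| \geq (\tfrac{1}{3} + O(\alpha) + O(y_m))|y_m|$, where $\alpha$ measures the deviation of adjacent nodal length ratios from $1$. Multiplying these pointwise lower bounds by $|T_n|$ and invoking $\int_{I^n_k}|T_n| \geq \tfrac{2}{\pi}|I^n_k|$ from Lemma \ref{int lower bound}, and using Corollary \ref{K chains} to make the four constituent nodal lengths equal up to a factor $(1+\eta)$, the total contribution is at least
\[
\bigl(2\cdot 3 + 2\cdot \tfrac{1}{3}\bigr)\cdot \tfrac{2}{\pi}\cdot y_m \cdot \ell \cdot (1+o(1)) = \frac{40}{3\pi}\, y_m\, \ell\, (1+o(1)),
\]
where $\ell$ is the common approximate nodal length and $|G^n_m|\approx 4\ell$. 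Dividing yields $\Gint^n_m(\widetilde y_m) - \Gint^n_m(0) \geq \tfrac{10}{3\pi} y_m(1+o(1))$, and since $\tfrac{10}{3\pi} \approx 1.061 > \tfrac{21}{20}$, inequality (\ref{bound 1}) follows by choosing $K$ large and $t$ small enough to absorb the $o(1)$ terms. The proof of (\ref{bound 2}) is entirely symmetric. The uniform upper bound on $|\Gint^n_m(\widetilde y_m)|$ is immediate from Lemma \ref{sup bound}: for $t$ small enough $\|T_n(\cdot,\widetilde y_m)\|_{G^n_m} \leq \lambda \leq 7/6$, whence the same bound holds for its average.

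The main obstacle is the tightness of the numerical margin $\tfrac{10}{3\pi} > \tfrac{21}{20}$: the error terms coming from the approximations $\alpha \approx 0$ and $y_m \approx 0$ must be absorbed into an arbitrarily small additive contribution, forcing a careful choice of parameters in the correct order. One first fixes $t$ small so that all $O(y_m)$ terms in the $R$-estimates are dominated, then chooses $K$ large so that Corollary \ref{K chains} makes $\alpha$ as small as needed (with $M=4$, matching the four nodal intervals of each $G^n_m$), and finally takes $n \geq 2K$ so that the index range $K \leq m \leq N-K$ is nonempty. Everything else is routine interval bookkeeping.
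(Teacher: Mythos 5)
Your proposal follows the paper's proof essentially step for step: the same decomposition of $\Gint^n_m(\widetilde y_m) - \Gint^n_m(0)$ as $\frac{1}{|G^n_m|}\int_{G^n_m} T_n(R-1)$, split over the four nodal intervals, the same ingredients (Lemmas \ref{R est 1}--\ref{R est 4}, Corollary \ref{R est 0}, Lemma \ref{int lower bound}, Corollary \ref{K chains}, Lemmas \ref{sup bound} and \ref{monotone}), and the same arithmetic $(2\cdot 3 + 2\cdot\tfrac13)\cdot\tfrac{2}{\pi}\cdot\tfrac14 = \tfrac{10}{3\pi} > \tfrac{21}{20}$. The one place the paper is slightly more explicit is the outer intervals $I^n_{4m-3}, I^n_{4m}$, whose images under $\phi^n_m$ do not exactly coincide with $[-2,-a]$ and $[1,2]$: the paper splits off a small leftover piece and uses only the sign information from Corollary \ref{R est rescaled} there, but your opening observation that $T_n(R-1)\ge 0$ pointwise on all of $G^n_m$ already disposes of that piece, so your argument is sound.
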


The choice of $7/6$ and $21/20$ is only for convenience, to make some later 
arithmetic work out. Our proof of Theorem 1.1 only requires that these 
estimates hold with some constants strictly larger than $1$.  

\begin{proof}[Proof of Lemma \ref{internal}] 
The claim that  $|\Gint^n_k(\widetilde y_m)| \leq  \frac 76|G^n_m| $ holds 
because $|T_n(x,\widetilde y_m)| \leq  7/6$ if $\epsilon $ is small 
enough, by Lemma \ref{sup bound}. 

The monotonicity is also  easy to see.  Indeed, we simply want 
to verify that the situation shown in 
Figures \ref{Perturb_3point_plusminus}  and \ref{Perturb_3point_plotrange} is correct: 
as we increase the factor $y_m$ of the perturbation in $G^n_m$, the perturbed polynomials 
change strictly monotonically, and hence the same is true for their integrals over $G^n_m$. 
However, any two such  perturbed polynomials can be written 
as $P_1 P_0$ and $P_2 P_0$, where $P_1$ and $P_2$ are cubics  
corresponding to the three perturbed roots in $G^n_m$, and $P_0$ is the 
product over all other roots. Since $P_0$ has no zeros in the interior of 
$G^n_m$, it does not change sign there, and thus it suffices to show 
that $P_1$ and $P_2$ move monotonically as a function of the perturbation 
factor $y_m$. This was Lemma \ref{monotone}. 

Next we prove the quantitative bound (\ref{bound 1}); the proof 
of (\ref{bound 2})  is identical, except for obvious sign changes.
Recall that $G^n_m$ is divided into four nodal sub-intervals,
$I^n_{4m-3}, \dots I^n_{4m}$. Because the rescaled version of 
$G^n_m$ does not exactly match $[-2,2]$, the estimates
are easier for the two middle intervals $I^n_{4m-2}$ and 
$I^n_{4m-1}$, so we  deal with these first.
We also suppose the perturbation is by a positive factor $y_m >0$. 

On $I^n_{4m-1}$,  the rescaled version of  Lemma \ref{R est 1} says that
the positive node is multiplied by at least 
$ 1+  (4 - a^{-2})  y_m + O(y_m^2)$ everywhere on the subinterval, so the 
integral increases by an additive factor of at least 
\[
	[3 +O(\alpha) +O(y_m)] \cdot y_m \cdot \int_{I^n_{4m-1}} T_n(x) dx \geq 
[3 +O(\alpha) +O(y_m)]  \cdot\frac{2 y_m } \pi \cdot  |I^n_{4m-1}|,\]
where we have used  that $a = 1+\alpha$ 
and that (by Lemma  \ref{int lower bound})
the integral of $T_n$ over a nodal interval $I^n_k$ is at least 
	$(2/\pi)|I^n_k|$.
Similarly,  by the rescaled version of Lemma \ref{R est 2}, the  absolute 
value of the negative node on $I^n_{4m-2}$ 
becomes smaller, and its integral 
increases by a positive additive factor of size at least  
\[   [(4 a^{-2}-1) y_m  +O(y_m^2)] \frac 2 \pi  |I^n_{4m-2}| 
 \geq  [3 +O(\alpha)+O(y_m)] \cdot \frac {2 y_m} \pi \cdot  |I^n_{4m-2}|.\]

Next we deal with the outer sub-intervals, namely $I^n_{4m-3}$
and $I^n_{4m}$. 
Let $J \subset I^n_{4m-3}$ denote the part of 
$I^n_{4m-3}$ that lands in $[-2,a]$ when we rescale 
$G^n_m$ as described above (the left root maps to $a$, 
the center root is mapped to $0$, 
and the right root maps to $1$).
Possibly $I^n_{4m-3} \setminus J$ is empty if  $I^n_{4m-3}$ maps into $[-2,a]$.
The size of this ``leftover''  interval is small if all four  subintervals 
of $G^n_m$ are about the same size, which happens if $G^n_m$ is 
not too near $-1$ or $+1$ by Lemma \ref{length growth}. This is 
where we use the assumption that $K \leq m \leq n-K$ for some large $K$.

More precisely, for any $\eta  > 0$, Lemma \ref{length growth}  says all 
the nodal intervals in $G^n_m$ have the same length up to a
multiplicative factor of $ 1- \eta$, if $K$ is large enough. Hence 
we may assume that they all have length  
at least $(1-\eta)|G^n_m|/4$. 
Thus the part of $I^n_{4m-3}$ that is not in $J$ has 
length at most $O(\eta|I^n_{4m-3} |)$, and it maps to an 
interval of length $O(\eta)$ (possibly empty) to the left of $-2$ under the rescaling.
On this part of $I^n_{4m-3}$, Corollary \ref{R est rescaled} says that 
the perturbed polynomial is larger than $T_n$, so the integral  over 
this segment increases under the perturbation, since 
$T_n$ is positive on $I^n_{4m-3}$.

On $J$, we use  the rescaled version of Corollary \ref{R est 0}, which says  the 
3-point perturbation makes $T_n$ larger on $J$ 
 by  a multiplicative  factor of  
$1+ (\frac 13 +O(\alpha) + O(y_m))y_m$. 
Thus the integral $\int_J T_n(x) dx $ will increase 
by an additive factor of 
\begin{align*} 
[\frac 13 +O(\alpha) + O(y_m)]y_m \int_J T_n(x) dx
	&\geq  [\frac 13 +O(\alpha) + O(y_m)] \cdot \frac {2 y_m }\pi |J| \\
	&\geq  [\frac 13 +O(\alpha) + O(y_m)] \cdot \frac {2 y_m }\pi (1-\eta)|I^n_{4m-3}| ,
\end{align*} 
where we have used  Lemma \ref{int lower bound} again to give a lower 
bound for area of a node  in terms of the length of the base interval. 
Since we have already shown that the integral over $I^n_{4m-3} \setminus J$
changes in the same (positive) direction, the integral over all 
of $I^n_{4m-3}$ changes by at least the bound given for $J$.
By a very similar argument, 
the integral of the negative node in  $I^n_{4m}$   is made smaller 
(in absolute value), by at least an additive factor of the same size. 

Thus the increase in the integral over
all of $G^n_m$ is at least
\begin{align*}
&[ \frac 2 3 + 6 +O(\alpha)+O(y_m)]  \frac {2 y_m} \pi  \cdot (1-\eta) |G^n_m|/4 \\
&\qquad  \qquad \qquad \geq ( \frac   {10}{3 \pi} +O(\alpha)+O(y_m)) (1-\eta)   y_m  |G^n_m|
	\geq   \frac {21}{20} \cdot   y_m |G^n_m|,
\end{align*} 
if $\eta$  and $y_m$ are small enough (note $10/(3 \pi) \approx  1.06103 > 21/20 
= 1.05$ and $|\alpha| \leq \eta$).
\end{proof} 

Figure \ref{Perturb_3point_plotrange} illustrates a computation of the 
change in the integral in a special case, and indicates the estimate 
of $y_n |G^n_m|$ 
in the previous lemma is within a factor of three of being sharp.

\begin{figure}[htb]
\centerline{ 
\includegraphics[height=2.5in]{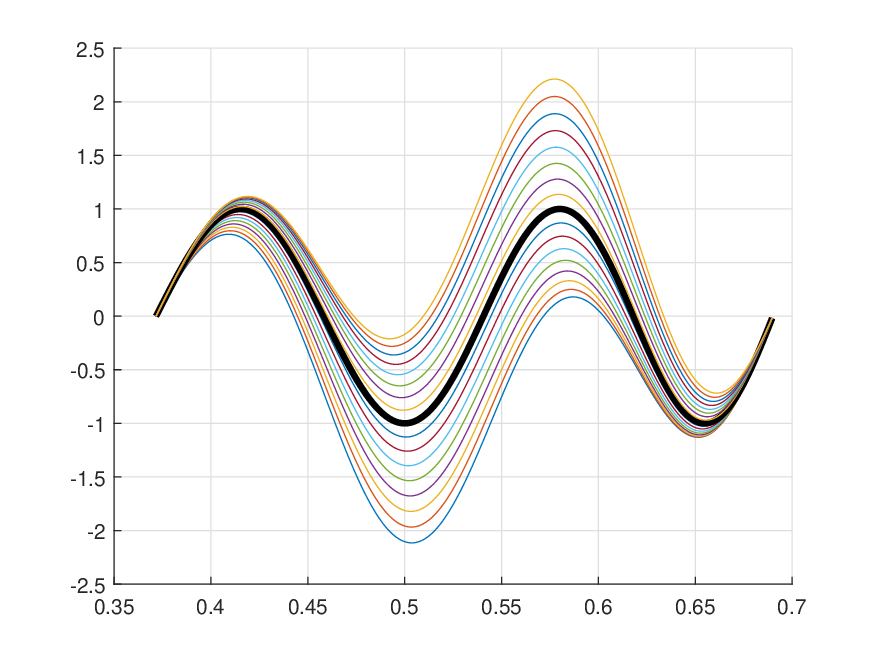}  
\includegraphics[height=2.5in]{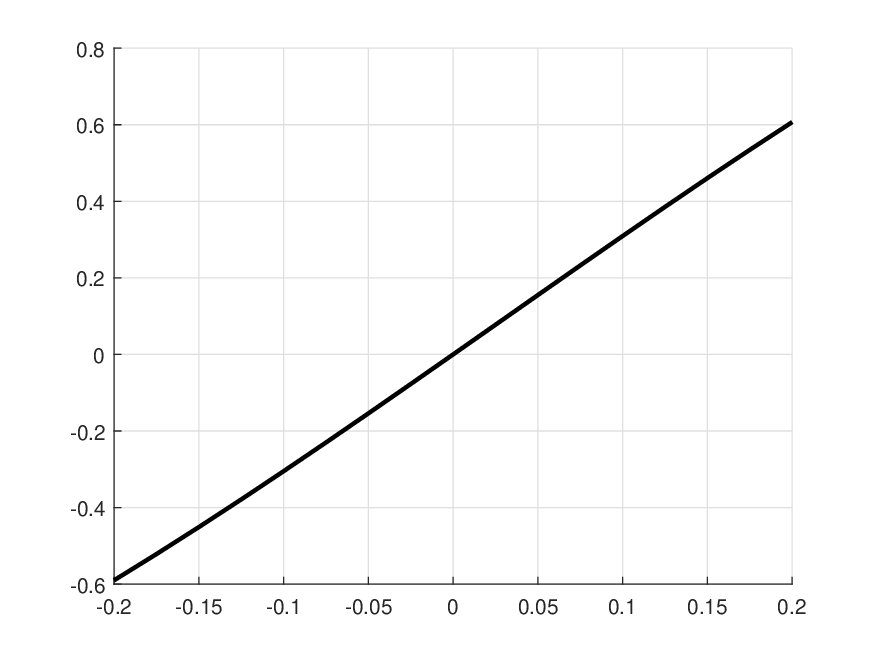}  
}
\caption{ \label{Perturb_3point_plotrange}
On the left  are 3-point perturbations for $y = -.2, -.175, \dots, .2$. 
The original Chebyshev polynomial is  highlighted.
On the right is a plot of  $\Gint^n_m(y) -\Gint^n_m(0)$ for these perturbations.
It has slope close to $3$, while our proof  shows the slope is  $\geq 1$.
This example was taken with $n=33$ and $m=5$.
}
\end{figure}

\begin{cor} \label{f is homeo}
If $t>0$ is small enough and  $n, K \in \naturals$ are both large enough, then 
 the map $f$ as defined in Equation (\ref{defn f}) is a homeomorphism from 
$Q^{N}_{t}$ to a cube  $Q'$ containing $Q^N_{t}$. 
\end{cor}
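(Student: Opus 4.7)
The key structural observation is that $f$ is a product map: since $f_k(y)=\Gint^n_k(\widetilde y_k)$ depends only on the single coordinate $y_k$, the map $f$ factors as $f(y)=(f_1(y_1),\ldots,f_N(y_N))$. Consequently, I would reduce the problem: $f$ is a homeomorphism of $Q^N_t$ onto $\prod_{k=1}^N f_k([-t,t])$ if and only if each scalar function $f_k:[-t,t]\to\reals$ is a homeomorphism onto its image. For the ``trivial'' indices $k\leq K$ or $k>N-K$ the function $f_k$ is the identity by definition, so only the interior indices $K<k\leq N-K$ require attention.

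For those interior indices, $f_k$ is continuous (integration of a polynomial whose three interior roots in $G^n_k$ depend continuously on $y_k$), and Lemma~\ref{internal} already supplies strict monotonicity of $f_k(y_k)$ in $y_k$. Hence each such $f_k$ is a homeomorphism of $[-t,t]$ onto the closed interval with endpoints $f_k(\pm t)$, and the image set $Q'=\prod_k[f_k(-t),f_k(t)]$ is a well-defined product of intervals (the ``cube'' of the corollary statement).

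The remaining task is to verify that $Q'\supset Q^N_t$, i.e.\ that each interval $f_k([-t,t])$ contains $[-t,t]$. The quantitative bounds (\ref{bound 1}) and (\ref{bound 2}) of Lemma~\ref{internal} give $f_k(t)-f_k(0)\geq \tfrac{21}{20}t$ and $f_k(0)-f_k(-t)\geq \tfrac{21}{20}t$, so $f_k([-t,t])$ contains the interval $[f_k(0)-\tfrac{21}{20}t,\ f_k(0)+\tfrac{21}{20}t]$. The containment $[-t,t]\subset f_k([-t,t])$ therefore reduces to showing $|f_k(0)|\leq\tfrac{1}{20}t$. To get this I would split $G^n_k$ into the two adjacent nodal pairs $I^n_{4k-3}\cup I^n_{4k-2}$ and $I^n_{4k-1}\cup I^n_{4k}$, observe that by Corollary~\ref{K chains} the four component intervals have lengths within a common factor $1+\eta$ provided $K$ is large enough, and then apply Corollary~\ref{int -> 0} to each pair (valid for $n\geq 6/\sqrt{\eta}$) to bound $|f_k(0)|=|\Gint^n_k(0)|$ by $\eta/3$.

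The constants should be chosen in the order: first fix $t$ small enough for Lemma~\ref{internal} to apply; then set $\eta=3t/20$; then pick $K$ from Corollary~\ref{K chains} large enough to guarantee the length-ratio bound on every interior interval with this $\eta$; and finally take $n\geq 6/\sqrt{\eta}$. I do not expect any single step to be a serious obstacle, since the product-map structure reduces everything to a one-dimensional monotonicity argument and all of the required quantitative ingredients have already been assembled in Sections~\ref{length sec}--\ref{3-pt sec}. The only mildly subtle point is the order in which $t$, $\eta$, $K$, $n$ must be chosen so that the small defect $|f_k(0)|$ is dominated by the perturbation gain $\tfrac{21}{20}t$ uniformly in $k$.
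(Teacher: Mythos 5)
Your proposal is correct and follows essentially the same path as the paper's own proof: use the coordinate-by-coordinate structure of $f$ to reduce to one dimension, invoke Lemma~\ref{internal} for strict monotonicity and the $\tfrac{21}{20}t$ gain, bound $|f_k(0)|=|\Gint^n_k(0)|$ via Corollary~\ref{int -> 0} (applied to the two adjacent nodal-interval pairs inside $G^n_k$) after fixing $\eta$ small via Corollary~\ref{K chains}, and combine. The only cosmetic difference is the particular choice of $\eta$ ($3t/20$ versus the paper's $t/10$), both of which yield $|f_k(0)|< t/20$ and hence the containment $Q^N_t\subset f(Q^N_t)$.
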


\begin{proof}
By definition, the $k$th coordinate of $f$ depends only on 
the $k$th coordinate of $y$, so the image is a cube, i.e., 
a product of compact intervals.
By Lemma \ref{internal}, every coordinate of $f$ is a monotone 
function of the $k$th coordinate of $y$. Hence $f$ is injective,
and thus a homeomorphism. 
Finally,   (\ref{bound 1}) and (\ref{bound 2})   imply 
the image under
$f_k$ of $[0, t]$ has length at least $\frac {21}{20} t$. 
Moreover, by Corollary \ref{int -> 0}  we know $\Gint^n_m(0)  < \eta/3$ 
if  $n \geq 6/\sqrt{\eta}$. (Recall that $\eta>0$ is our
upper bound for the length ratio between 
nodal intervals inside  $G^n_m$.
This can be taken as close to zero as we wish by taking $K$ 
large enough.)
If  $\eta \leq t/10$, and  $n \geq 6 /\sqrt{\eta}$, 
then $|\Gint^n_m(0)|  <t/20$. Therefore  $f_k([0, t])$ contains 
$[\Gint^n_m(0), t]$.  The same argument shows
 $f_k([- t,0]) \supset [-t ,\Gint^n_m(0),]$, and 
this implies $f(Q^N_t)$  contains $Q^{N}_t$.
\end{proof} 

\section{Estimating the  effect of exterior  perturbations} \label{exterior sec}

Next we see how perturbations of the roots outside $G^n_m$ affect 
$T_n$ inside this interval. This will complete the proof of 
Theorem \ref{delta estimate}.

\begin{lemma}  \label{external} 
Suppose  $y \in Q^N_t$. If $t>0$ is small enough  and $K \in \naturals$
is large enough, then for $K <  m \leq N-K$ we have 
\[
	|\Gint^n_m(\widetilde y_m) - \Gint^n_m(y) | = 
	\bigg|  \frac 1{|G^n_m|} \int_{G^n_m}  T_n(x,\widetilde y_m) -T_n(x,y)  dx  
	\bigg|\leq   
\frac {t}{2}.  \]
\end{lemma}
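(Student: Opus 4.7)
The plan is to factor out the perturbation inside $G^n_m$ from the total perturbation, and then show that the residual factor coming from all other intervals is close to $1$. Writing each 3-point perturbation as a rational multiplicative factor $R^n_k(x)$ (the distortion function of Section \ref{3-pt sec}), we have
\begin{align*}
T_n(x,y) = T_n(x,\widetilde y_m) \cdot S(x), \qquad S(x) := \prod_{k \ne m} R^n_k(x),
\end{align*}
so that
\begin{align*}
A^n_m(y) - A^n_m(\widetilde y_m) = \frac{1}{|G^n_m|} \int_{G^n_m} T_n(x,\widetilde y_m) \bigl[S(x) - 1\bigr]\, dx.
\end{align*}
Since Lemma \ref{sup bound} gives $|T_n(x,\widetilde y_m)| \le 7/6$ on $G^n_m$, the lemma reduces to showing that $|S(x) - 1|$ is at most a small multiple of $t$ on $G^n_m$, uniformly in $x$.

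The first main step is to apply Corollary \ref{R est far rescaled} to each $R^n_k$ with $k \ne m$: if $d_k = d_k(x)$ is the distance from $x \in G^n_m$ to the center interior root of $G^n_k$, normalized by $|I^n_{4k-1}|$, then $|R^n_k(x) - 1| \le C|y_k|/d_k^3 \le Ct/d_k^3$ for some absolute constant $C$ (with a separate, direct argument from Lemmas \ref{R est 1}--\ref{R est 4} or Corollary \ref{R est 0} handling the nearest-neighbor cases $k = m \pm 1$ where $d_k$ is bounded below only by a small number). The second step is to convert the normalized distance $d_k$ into an estimate in the index $|k - m|$: by Corollary \ref{K chains}, choosing $K$ large enough forces all nodal intervals in the relevant window to be comparable up to a factor $1 + \eta$, and then Corollary \ref{ratio upper bound} (together with the fact that each $G^n_k$ is a union of four adjacent nodal intervals) gives $d_k(x) \gtrsim |k - m|$ uniformly for $x \in G^n_m$.

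The third step is summing: $\sum_{k \ne m} |R^n_k(x) - 1| \le C' t \sum_{j \ne 0} |j|^{-3} \le C'' t$, where $C''$ is absolute. For $t$ small enough this sum is at most $1/2$, so the elementary inequality $|\prod_k(1 + a_k) - 1| \le e^{\sum |a_k|} - 1 \le 2\sum|a_k|$ (valid when $\sum|a_k| \le 1$) yields
\begin{align*}
|S(x) - 1| \le 2 C'' t.
\end{align*}
Combining with the pointwise bound on $T_n(x,\widetilde y_m)$ produces $|A^n_m(y) - A^n_m(\widetilde y_m)| \le (7/3)C''t$, which is at most $t/2$ after possibly shrinking $t$.

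The main obstacle is attaining the specific bound $t/2$ rather than just $O(t)$. Because the constant from the cubic decay is not automatically small, I expect to need to work harder on the contributions from the finitely many closest exterior intervals $G^n_{m \pm 1}, G^n_{m \pm 2}, \ldots$, where $d_k$ is of order unity and the naive cubic decay is too weak. There are two natural ways to absorb this: either exploit the oscillation of $T_n(x,\widetilde y_m)$ across the four sign-alternating nodal sub-intervals of $G^n_m$ (so that the integral of $T_n \cdot (S - 1)$ benefits from the same near-cancellation that makes Corollary \ref{int -> 0} work), or observe that on $G^n_m$ the function $S$ is smooth and close to a constant, and subtract that constant from $S$ before integrating to gain an extra factor of $|G^n_m|$ times $\|S'\|$. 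Either refinement should be enough, combined with the summable tail, to force the constant below $1/2$ by choosing $K$ large and $t$ small.
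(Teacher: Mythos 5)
Your high-level strategy matches the paper's: factor $T_n(x,y) = T_n(x,\widetilde y_m)\cdot S(x)$ with $S=\prod_{k\ne m}R^n_k$, use the cubic decay of $R^n_k-1$ from Corollary~\ref{R est far rescaled}, sum a convergent series, and multiply by a bound on $|T_n(\cdot,\widetilde y_m)|$. You also correctly diagnose the hard part: the linear-in-$t$ error has a genuinely constrained constant, and ``possibly shrinking $t$'' cannot help because both sides of the target inequality are linear in $t$. But from there the proposal does not close the gap, and the two workarounds you sketch are not what the paper uses and are not verified.

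The missing ingredient is quantitative, not structural. Your step ``$d_k(x)\gtrsim|k-m|$'' throws away a factor of $4$ that the paper needs. Since each $G^n_k$ is a union of \emph{four} nodal intervals, the (normalized) distance from $G^n_m$ to the center root of $G^n_{m+j}$ is $\gtrsim 4j-3$, not $\gtrsim j$. With your weaker bound the critical sum works out to roughly $\sum_{j\ge 1}\frac{2}{j(j+1)(j+2)}=\frac12$; after exponentiating and multiplying by the bound $\lVert T_n(\cdot,\widetilde y_m)\rVert_\infty\leq\frac76$ this overshoots $t/2$. With the factor of $4$ restored, the paper instead bounds the near contribution by
\begin{equation*}
\frac{1}{1\cdot 2\cdot 3}+2\sum_{j\ge 2}\frac{1}{(4j+1)(4j+2)(4j+3)}<\frac15,
\end{equation*}
using a convexity observation to pair the $m\pm j$ terms and a finite numerical check plus tail estimate. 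It treats the far intervals separately and crudely (their contribution is $\leq 1+t/M$ for any fixed $M$), yielding a total exterior distortion $\leq 1+\frac{3t}{7}$, and finally $\frac{3t}{7}\cdot\frac76=\frac t2$. So the constant $t/2$ is attained by direct summation with the correct normalization --- no oscillation cancellation across the nodal sub-intervals, and no subtraction of a constant from $S$, is invoked. Your refinement (a) is plausible but unproved and unnecessary; refinement (b) runs into the issue that $\lVert S'\rVert\cdot|G^n_m|$ is itself $O(t)$, not $o(t)$, so it does not obviously gain a factor. As written, the proposal stops short of proving the lemma.
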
 

\begin{proof}
For a fixed $m$ we want to estimate the contribution of all 
perturbations in $G^n_k$ for $k\ne m$ to the distortion function 
in $G^n_m$. 
Suppose $M \in \naturals$ (we will fix a value below). 
We divide  the intervals $\{G^n_k\}_{k \ne m}$  into two groups according to 
whether $|k-m|\leq M$ or $|k-m|> M$.  The second case  (the more distant 
intervals) is easier, and we deal with it first.

Suppose $I$ and $J$ are the two intervals formed from 
$G^n_k$ and $G^n_m$ after rescaling the real line 
	so that the 
center and right interior roots of $G^n_k$ map to  $0$ and $1$. 
Then $I$  is approximately $[-2,2]$ if $K < k \leq N-K$,
by Corollary \ref{length growth}. 
With this normalization, 
Lemma \ref{R est far rescaled} says that 
the  distortion $R(x)$  at a point $x \in J$   of a perturbation  
by a factor  $t$ in $I$ is  at most 
\begin{align}\label{distortion bound} 
1+ \frac{ 2 t  +O(\alpha t) + O(t^2)}{(x-1)x(x+1)}.
\end{align} 
Recall that here $\alpha = a-1$, so  $|\alpha|$ is as small 
as we wish by taking $K$ large enough.

Therefore
the  distortion bound  on $G^n_m$ for perturbations in $G^n_k$  is largest
at the endpoint of $G^n_m$ closest to $G^n_k$, and is bounded 
by the formula above, except that $x$ is replaced by the distance 
from $G^n_m$ to the center root of $G^n_k$, divided by the distance 
between the center and right roots of $G^n_k$. 
We can simplify this a little by replacing the distance between 
$G^n_m$ and the center of $G^n_k$ by the distance from $G^n_m$ to 
$G^n_k$. This is smaller, so gives a slightly larger bound.
By Corollary \ref{ratio upper bound}, and because each $G^n_j$ is 
made up  of four nodal intervals, we have 
\[ \frac {|G^n_k|}{\dist(G^n_k,G^n_m)} \leq \frac {16}{4|k-m|-1} 
\leq \frac {16}{4|k-m|-4}  = \frac {4}{|k-m|-1}.\]

Recall from calculus that  for $x >0$,  $1+x = \exp(\log(1+x)) \leq \exp(x) $. 
	Also, if $f$ is decreasing on $[M, \infty)$ then 
	$\sum_{j=M}^\infty f(j) \leq \int_{M-1}^\infty f(x) x$.
By definition, the distortion functions for perturbing distinct 
sets of roots multiply to give the total distortion function, 
so we see that the total distortion on $G^n_m$ due to perturbations 
in all  $G^n_k$ with $|k-m| >  M$ is bounded by the product 
\begin{align*} 
	&\prod_{k:|k-m|>M} \Big( 1 + \frac{C t}{(|k-m|-1)^3 }  \Big)
= 
\prod_{j \geq M} \Big( 1 + \frac{C t}{j^3}  \Big) \\
&\qquad \qquad  
\leq   \exp \bigg( \sum_{j\geq M} \frac {C t}{j^3} \bigg) 
\leq   \exp \bigg( C t \int_{M-1}^\infty x^{-3} dx  \bigg) 
\leq   \exp \bigg(  \frac {C t}{ 2  (M-1)^{2} }   \bigg) .
\end{align*} 
The final term is less than  $1 +   t/M$ if  $M$ is large enough.
Thus the distant intervals contribute almost no distortion.
	 
Next we consider the distortion due to ``nearby'' intervals, i.e., 
the effect on $G^n_m$ of perturbations in $G^n_k$ with $|m-k| \leq M$.
This is more delicate than the ``distant'' intervals, and 
getting the  first few terms (corresponding to 
intervals adjacent and nearly adjacent to $G^n_m$) to be small enough is 
one reason why we have used 3-point perturbations, instead of the simpler 2-point 
perturbations.

Fix $\eta >0$. If $K$ is large enough (depending on 
$\eta$ and $M$), then by Lemma \ref{length growth}  we can 
assume all the  nodal intervals contained in intervals  $G^n_k$ with 
$|m-k| \leq M$ have lengths 
within a factor of $1 -\eta$ of each other.  To simplify calculations 
we normalize  $G^n_m$ as before, with the center and right-hand 
interior roots mapping to  $0$ and $1$ respectively, 
and $G^n_m$ maps to approximately $[-2,2]$.
Thus the  renormalized nodal intervals have  length 
approximately $1$ (within a multiplicative factor of size $1+\eta$).
Since $ 1-\eta \leq (1+\eta)^-1$ we  have that 
$1-\eta \leq |I|/|J| \leq 1+\eta$ for any two nodal intervals 
$I, J \subset G^n_k$. 

With these assumptions,  if $|k-m| \leq M$, then 
the distance between the center of $G^n_m$ and 
a $G^n_k$  is at least $x=4(1-\eta)|k-m|$. 
If $G^n_k = G^n_{m+j}$  is to the right of $G^n_m$, 
then $G^n_{m+j}$  is approximately the interval $[4j-2, 4j+2]$ for 
$j= 1, 2, \dots$, M  (with error at most $\eta$)
and the maximum of our bound for the distortion  on $G^n_m$
by a perturbation in  $G^n_{m+j}$ occurs 
at the right endpoint of $G^n_m$, since this is the endpoint of $G^n_m$ 
that is closest to $G^n_{m+j}$.
Our distortion bound is smallest at the left endpoint of $G^n_m$, which 
is the furthest point of $G^n_m$ from $G^n_{m+j}$. 
At this endpoint, our estimates say the distortion  is at most 
\[ 1 + \frac {[2 +O(\eta)+O(t)]t}{(4j-3)(4j-2)(4j-1)}.\]
The smallest size of our estimate occurs at the endpoint
of $G^n_m$ farthest from $G^n_{m+j}$ and equals 
\[ 1 + \frac {[2 +O(\eta)+O(t)]t}{(4j+1)(4j+2)(4j+3)}
 =  1 + \frac {[2 +O(\eta)+O(t)]t}{(4(j+1) -3)(4(j+1)-2)(4(j+1)-1)}.\]
Similar estimates hold for perturbations in intervals to the left
of $G^n_m$, i.e.,  in $G^n_{m-j}$ for $j=1,2, \dots, M$.

Below, we will want to estimate the product of these terms over  
all $k = m-M , \dots m+M$, except for $k=m$. We can get a slightly better 
estimate by pairing  symmetrically placed terms of the form $ m \pm j$, 
and we take advantage of this as follows. For the moment,  we consider 
only the denominators in the bounds above. 

It is easy to check from the explicit formula  that the rational function giving the 
distortion bound  due to perturbations in $G^n_{m+j}$ is convex 
as a function of $x$ on $G^n_m$ (e.g., its partial fraction expansion is 
a sum of three convex terms on this interval). Similarly,  the distortion bound 
for perturbations in $G^n_{m-j}$ is convex on $G^n_m$. 
Therefore, the sum of these bounds is convex on this 
interval, and thus the sum takes its maximum value at one of the endpoints. 
See Figure \ref{ConvexSum}.

\begin{figure}[htb]
\centerline{
\includegraphics[height=1.5in]{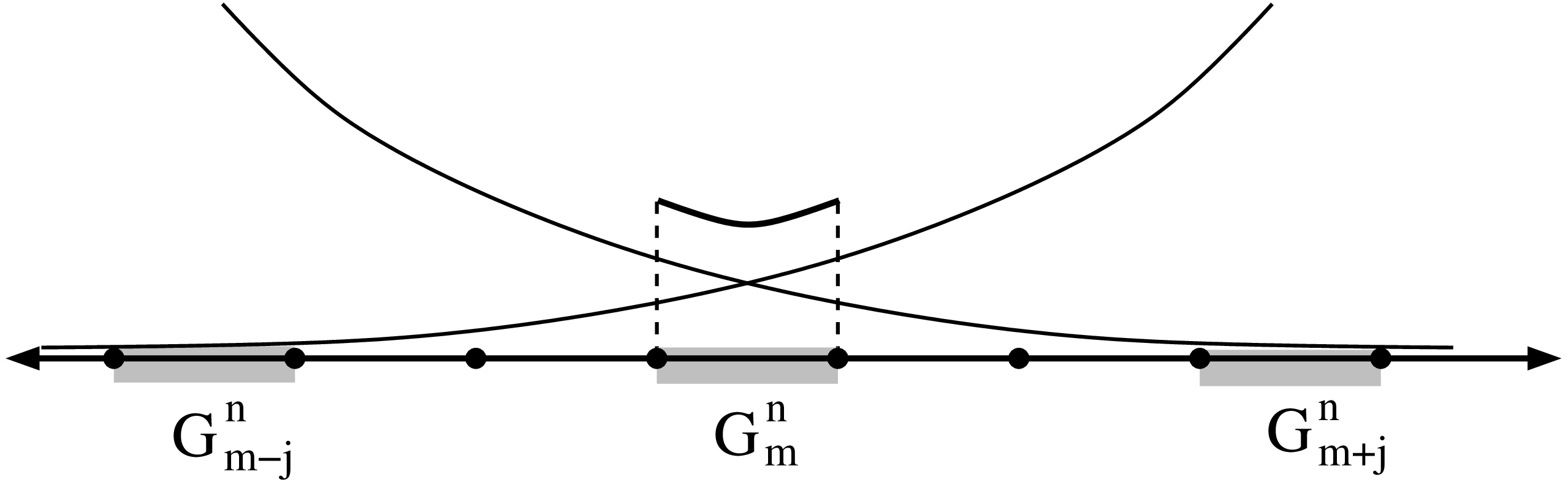}
}
\caption{ \label{ConvexSum}
The sum of the distortion bounds corresponding to $G^n_{m-j}$ and 
$G^n_{m+j}$ is convex on $G^n_m$ and hence bounded on 
$G^n_m$ by its values at the endpoints. 
}
\end{figure}

Using the elementary observation that 
\[(1+x)(1+y) \leq \exp(\log((1+x)(1+y)))
\leq \exp(x+y),\]
we can bound the product of the distortion bounds  for 
the distortions in both $G^n_{m+j}$ and $G^n_{m-j}$ 
by 
\[ \exp\Big(   \frac {1} {(4j-3)(4j-2)(4j-1)}
   + \frac {1} {(4(j+1) -3)(4(j+1)-2)(4(j+1)-1)} \Big) .\]
   Taking the product of distortions for all $j$ is thus 
   bounded by the exponential of the corresponding sum  of these
   fractions over $j=1,\dots, M$.
   Because  the second fraction above is the same as the first, but with 
   $j$ replaced by $j+1$,  
   each fraction is repeated twice, except 
   for the first and last.  Thus  we get the upper bound for the sum 
\[  \frac 1{1 \cdot 2 \cdot 3} + 2 \sum_{j=2}^M
   \frac {1} {(4j+1)(4j+2)(4j+3)}   \leq 
  \frac 1{6} + \frac 2{210} + \frac{2} {990} + \frac{2}{2730} + \dots.\]
The sum becomes larger by replacing $M$ by $\infty$, and we can 
bound the infinite sum (that clearly converges) by computing a finite number
$S$ of terms and bounding the remaining  tail by the estimate 
\[ 
\sum_{j=S}^\infty \frac {1} {(4j+1)(4j+2)(4j+3)}   
\leq 
\sum_{j=S}^\infty \frac {1} {(4j+1)^3}   
\leq 
       \sum_{j=4S+1}^\infty \frac {1} {j^3}   
\leq  \int_{4S}^\infty x^{-3} dx 
\leq   \frac 1{32S^2}.
\]
Taking $S=100$ gives the upper bound $.1799 < 1/5$.

Using this (and the fact $1+x \leq \exp(x)$), 
the distortion bound on $G^n_m$ due to 
perturbations within $M$ steps of $G^n_m$   is bounded  by
\begin{align*}
	&\prod_{k: 0<|k-m|\leq M} \bigg( 1 + 
\frac{ (2+ O(\eta)+O(t)) t}{ |(|k-m|+1)|k-m|(|k-m|-1)|}     \bigg)\\
	& \qquad \qquad \leq 
 \exp\bigg( \sum_{k: 0<|k-m|\leq M}  
\frac{ (2+ O(\eta)+O(t)) t}{ |(|k-m|+1)|k-m|(|k-m|-1)|}    \bigg)\\
	& \qquad \qquad\leq
	\exp\Big( \frac{ (2+ O(\eta)+O(t)) t}{ 5 }   \Big).
\end{align*} 
By taking  $\eta$ and $t$ small enough,  we can make this 
less than $1+ \lambda t$ for any $\lambda > 2/5$. We previously 
proved the distortion contributed by the distant intervals could be taken 
to be less than $1+ t/M$, so by taking $M$ large enough
(say $M \geq 10$), 
the total distortion from perturbations outside $G^n_m$ 
is less than $1 +  3 t/7$. Thus  on $G^n_m$ we have 
\[    (1- \frac {3 t}7 )  T_n(x, \widetilde y_m)
\leq  T_n(x,y) \leq (1+  \frac {3 t}7 ) T_n(x, \widetilde y_m),
\]
and   hence integrating over $G^n_m$,  
\[   (1-\frac {3 t} 7  ) \Gint^n_m(\widetilde y_m)
\leq  \Gint^n_m(y) \leq (1+ \frac {3 t}7 ) \Gint^n_m(\widetilde y_m).
\]
If we take $t$ small enough that
$|\Gint^n_m(\widetilde y_m)| \leq \frac 76 |G^n_m| $, then this implies 
\[ |\Gint^n_m(y) - \Gint^n_m(\widetilde y_m)|
\leq  \frac  {3 t}7  |\Gint^n_m(\widetilde y_m )| 
\leq  \frac {3 t} 7  \cdot  \frac 76 |G^n_m|   = \frac  t 2 |G^n_m|,
\]
as desired.
\end{proof} 

This completes the proof of  Theorem \ref{delta estimate}. In the next 
section we use it to prove our main result,  Theorem   \ref{weier++}: 
polynomials with all critical points in a compact interval $I$ are 
dense in $C_\reals(I)$.

\section{Applying Brouwer's fixed point theorem} \label{fixed pt sec} 

Recall that $Q^n_t= [-t,t]^n$, that
$\lVert(x_1, \dots , x_n)\rVert = \max_{1\leq k \leq n} |x_k|$ 
denotes the supremum norm on $\reals^n$, and 
that $\lVert f-g\rVert_Q = \sup_{x \in Q} |f(x)-g(x)|$.
By Brouwer's fixed point theorem \cite{Brouwer}, any continuous map of 
$Q^n_t$ into itself has a fixed point. There are now 
various short proofs of this result, e.g., 
\cite{MR505523}. 

\begin{lemma}  \label{topology lemma 2} 
Suppose that $I_t = [-t,t]$,   and 
for $k=1, \dots, n$, that   $J_k \subset \reals$  is a compact interval 
that contains $I_t$.  Let $Q = Q^n_t= \prod_{k=1}^{N} I_t$
and $Q' = \prod_{k=1}^{N} J_k$. Suppose  $f= (f_1, \dots, f_{N})$ 
is a homeomorphism from $Q$ to $Q'$, and  that $g:Q \to \reals^N$ 
is a continuous map such that   $ \lVert f-g \rVert _Q \leq t/2$. 
Then $Q^n_{t/2} \subset g(Q)$. 
\end{lemma}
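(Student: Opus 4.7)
The plan is to prove the lemma via a direct application of Brouwer's fixed point theorem, using $f^{-1}$ to convert the approximate equation $g(y)=p$ (for a target $p \in Q^N_{t/2}$) into a genuine fixed-point problem on the cube $Q$.

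Fix an arbitrary target point $p \in Q^N_{t/2}$; the goal is to produce $y^* \in Q$ with $g(y^*) = p$. The key idea is to consider the map
\[
\phi(y) = f^{-1}\bigl(p + f(y) - g(y)\bigr).
\]
A fixed point of $\phi$ satisfies $f(y^*) = p + f(y^*) - g(y^*)$, which rearranges exactly to $g(y^*) = p$, so finding a fixed point is equivalent to solving the desired equation. The first thing I would check is that $\phi$ is actually well-defined as a map $Q \to Q$: since $\|p\| \le t/2$ and $\|f(y) - g(y)\|_Q \le t/2$ by hypothesis, the triangle inequality gives
\[
\|p + f(y) - g(y)\| \le t
\]
for every $y \in Q$, so the argument of $f^{-1}$ lies in $[-t,t]^N = Q \subset Q'$, where $f^{-1}$ is defined. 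Because $f^{-1}$ maps into $Q$, we conclude $\phi(y) \in Q$ for all $y \in Q$.

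Next I would note that $\phi$ is continuous as a composition of continuous maps ($f$, $g$, and $f^{-1}$, the last being continuous since $f$ is a homeomorphism between compact sets). Brouwer's fixed point theorem then yields $y^* \in Q$ with $\phi(y^*) = y^*$, which, as noted above, implies $g(y^*) = p$. Since $p \in Q^N_{t/2}$ was arbitrary, this shows $Q^N_{t/2} \subset g(Q)$.

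I do not expect any serious obstacle here: the only nontrivial observation is the inclusion $Q^N_{t/2} + (f(Q) - g(Q)) \subset Q'$, and this is immediate from the hypothesis $\|f-g\|_Q \le t/2$ together with the containment $Q \subset Q'$. The construction of $\phi$ is the standard ``perturbed inverse'' trick for promoting an approximate surjectivity statement to an exact one via a fixed-point argument, and the hypothesis that $f$ is a homeomorphism (rather than merely, say, monotone coordinate-wise) is used only to guarantee the continuity of $f^{-1}$ needed to invoke Brouwer.
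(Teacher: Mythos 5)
Your proof is correct and essentially identical to the paper's: both define the auxiliary map $F(x) = f^{-1}(a + f(x) - g(x))$, verify it maps $Q$ into itself via the triangle inequality (using $\lVert a\rVert \leq t/2$ and $\lVert f-g\rVert_Q \leq t/2$), note continuity, and invoke Brouwer to obtain a fixed point solving $g(x) = a$.
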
 

\begin{proof} 
Suppose  $a\in Q^n_{t/2}$. We want to show there is $\widehat x \in Q$ so that 
$g(\widehat x) = a$.  
For $x \in Q^n_t$, define $F(x) = f^{-1}(a+f(x)- g(x))$. 
By assumption, for 
$x \in Q$, 
\[ \lVert a + f(x) - g(x)\rVert 
 \leq  \lVert a \rVert  +  \lVert f(x) - g(x)\rVert 
  \leq t/2 + t/2 = t,\]
	so $a+f(x)-g(x) \in Q' = f(Q)$. Thus $F(x)$ is well defined 
	and $F(Q)  \subset  Q$.
Since $f$ is a homeomorphism, $f^{-1}$ is continuous, and hence 
$F$ is continuous. 
Thus Brouwer's fixed 
point theorem  implies $F$ has a fixed point $\widehat x \in Q$. At this point 
\begin{align*} 
	f^{-1}( a +f(\widehat x) -g(\widehat x))  &=  \widehat x \\
	a +f(\widehat x) -g(\widehat x)  &=  f(\widehat x) \\
	 a &=  g(\widehat x). \qedhere
\end{align*} 	
\end{proof}

We want to apply this to the functions $f$ and $g$ defined in 
Equations (\ref{defn f}) and (\ref{defn g}). Lemma  \ref{internal} showed that 
$f$ is a homeomorphism of $Q=Q^N_{t}$  onto a cube $Q'$ containing 
$Q$, at least if $t>0$ is small enough. Theorem \ref{delta estimate} shows that 
$\lVert f-g \rVert_Q  \leq t/2$. Thus we are in a 
position to apply Lemma \ref{topology lemma 2} to these functions,  in order to 
prove the following  (slightly stronger) version of Theorem \ref{weier++}.

\begin{cor} \label{Lip approx} 
Any Lipschitz function $F$ on $[-1,1]$ can 
be uniformly approximated  to within $1/n$
by a   polynomial $P$ of degree $O(n)$ 
with all its  (real or complex) critical points 
in $[-1,1]$.  Moreover, $P$ and $F$ agree at  both
endpoints of every interval $G^n_m$, except for a 
uniformly bounded number at the beginning and end of 
$[-1,1]$. 
If $F$ is $A$-Lipschitz, then we can choose 
$P$  to be  $CA$-Lipschitz
with a constant $C$ that is independent of $F$ and $n$.
\end{cor}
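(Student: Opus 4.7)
The plan is to combine Theorem~\ref{delta estimate} with Lemma~\ref{topology lemma 2} and Corollary~\ref{f is homeo} to produce, for each $n = 8m+1$, a perturbed polynomial $T_n(\cdot, y)$ whose integral over every interior interval $G^n_k$ matches the increment of $F$; its antiderivative will be the desired $P$. First I would rescale: given $F$ with Lipschitz constant $A$, set $\tilde F = (t/(2A)) F$, which is $(t/2)$-Lipschitz, where $t>0$ is the constant from Theorem~\ref{delta estimate}. It suffices to approximate $\tilde F$ and then multiply the approximation by $2A/t$.

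For each $k$ with $K < k \leq N-K$, define
\begin{equation*}
a_k = \frac{\tilde F(r^n_{4k+1}) - \tilde F(r^n_{4k-3})}{|G^n_k|},
\end{equation*}
and take $a_k = 0$ on the remaining $2K$ boundary coordinates. Since $\tilde F$ is $(t/2)$-Lipschitz, $|a_k| \leq t/2$, so $a \in Q^N_{t/2}$. By Corollary~\ref{f is homeo} the map $f$ of (\ref{defn f}) is a homeomorphism of $Q^N_t$ onto a cube containing $Q^N_t$, and Theorem~\ref{delta estimate} gives $\lVert f-g\rVert_{Q^N_t} \leq t/2$. Lemma~\ref{topology lemma 2} then yields $y \in Q^N_t$ with $g(y) = a$; equivalently $\int_{G^n_k} T_n(x,y)\, dx = \tilde F(r^n_{4k+1}) - \tilde F(r^n_{4k-3})$ for every $k$ in the interior range. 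I would then let $\tilde P$ be the antiderivative of $T_n(\cdot,y)$ with $\tilde P(0) = \tilde F(0)$. Since the perturbations keep all roots of $T_n(\cdot,y)$ real and in $[-1,1]$, every critical point of $\tilde P$ lies in $[-1,1]$. The choice $n = 8m+1$ forces $0 = r^n_{4m+1}$ to be the common endpoint of $G^n_m$ and $G^n_{m+1}$, so telescoping the integral identity outward from $0$ in both directions gives $\tilde P = \tilde F$ at every endpoint of every $G^n_k$ with $K < k \leq N-K$.

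For the Lipschitz and approximation estimates, Lemma~\ref{sup bound} gives $\lVert T_n(\cdot,y)\rVert_\infty \leq 7/6$, so $\tilde P$ is $(7/6)$-Lipschitz. On each interior $G^n_k$, $\tilde P$ and $\tilde F$ are both Lipschitz and agree at both endpoints, and $|G^n_k| \leq 4\pi/n$ by Lemma~\ref{length lower bound}, so $|\tilde P - \tilde F| = O(1/n)$ there. The $2K$ boundary intervals have total length $O(K/n) = O(1/n)$, and on them $\tilde P$ and $\tilde F$ still agree at the innermost boundary endpoint, giving the same bound by the Lipschitz property. Scaling back, $P = (2A/t)\tilde P$ is $CA$-Lipschitz with $C = 7/(3t)$, has all critical points in $[-1,1]$, agrees with $F$ at the prescribed endpoints, and satisfies $\lVert F-P\rVert_\infty = O(A/n)$. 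The main obstacle is controlling the boundary regions, where no integral constraint is imposed; this is resolved by the fact that $K$ is an absolute constant independent of $n$, so the boundary contributes only $O(1/n)$ to the supremum error.
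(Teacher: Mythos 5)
Your proposal is correct and follows essentially the same route as the paper: reduce to a small Lipschitz constant by rescaling, set $a_k$ to the scaled increment of $F$ over $G^n_k$, invoke Corollary~\ref{f is homeo}, Theorem~\ref{delta estimate}, and Lemma~\ref{topology lemma 2} to find $y$ with $g(y)=a$, take the antiderivative of $T_n(\cdot,y)$ normalized at $0$, and finish with the Lipschitz bound from Lemma~\ref{sup bound} together with $|G^n_k|=O(1/n)$. The only differences are cosmetic re-parametrizations (you rescale to $(t/2)$-Lipschitz and plug $t$ directly into Theorem~\ref{delta estimate}, whereas the paper rescales to $t$-Lipschitz and applies the theorem at scale $2t$; you use the $7/6$ bound where the paper uses $2$), and you make explicit the telescoping from the central root $r^n_{4m+1}=0$, which the paper leaves implicit.
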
 

\begin{proof} 
We claim it suffices prove that 
every $t$-Lipschitz function can be uniformly approximated, 
for some positive value of $t$.  
To see this, note that if $h$ is $A$-Lipschitz, 
then $\widetilde h=  (t/A)  \cdot h$
is $t$-Lipschitz,
and if $\widetilde p$ approximates $\widetilde h$ to 
within $\epsilon t/A$ then $p=(A/t) \cdot \widetilde p$ 
approximates $h$ to within $\epsilon$.  
This proves the claim.
Also note that $p$ is $(2A/t)$-Lipschitz if $\widetilde p$ 
is $2$-Lipschitz, which will be the case below. Thus we can 
	take $C = 2/t$ in the statement of Theorem \ref{weier++}.

Choose $t$  so that   Theorem \ref{delta estimate} holds, 
i.e., so that $\lVert f-g\rVert_{Q^N_{2t}} \leq t$. 
Suppose $G^n_k = [s,t]$ and set  
\[a_k =  \Delta(F,G^n_k)/|G^n_k| = (F(t)-F(s))/(t-s) .\]
Then $a \in Q^N_t$ since $F$ is $t$-Lipschitz.
Then by Lemma \ref{topology lemma 2},
for any $n\in 8\naturals+1$ sufficiently large,
there is  a $y\in  Q_t^N$, so that  the  perturbed
Chebyshev polynomial $T_n(x,y)$  satisfies 
	\[ \Gint^n_k(y) = \frac 1{|G^n_k|} \int_{G^n_k} T_n(x,  y) dx  = a_k,
	  \text{ for all } k=1,\dots N.  \]
Thus the  anti-derivative
	\[ P(x) =  F(0)+  \int_{0}^x T_n(t,y) dt \]
is a polynomial of degree $n$ that has all its 
critical points in $[-1,1]$ and  satisfies 
$ P(x) = F(x) $ at each endpoint of any 
$G^n_k$ (except possibly for a bounded number $K$ at each 
end of $[-1,1]$.

By Lemma \ref{sup bound}, 
$|P'(x) | = |T_n(x,y)|$ is bounded by $2$ if $t$ 
is small enough; thus $P$ is $2$-Lipschitz.   
Thus $|P(x)-f(x)| \leq (2+t)|G^n_k|/2$ on 
$|G^n_k|$, except for $K$ intervals  near each end, where 
we get the bound $ K(2+t)\max_k |G^n_k|$. 
Since $\max_k |G^n_k| \leq 4 \pi/n \to 0$  
(Lemma \ref{length estimate}), we 
see that $P$ approximates $F$ to within $O(1/n)$ on $[-1,1]$.
\end{proof} 

This completes the proof of Theorem \ref{weier++}. 

\section{Weak-$\ast$ convergence: Proof of Theorem \ref{weak limit} } 
\label{weak sec}

In this section, 
we prove  Theorem \ref{weak limit}, that every bounded, measurable, real-valued  
function on $[-1,1]$ is the weak-$\ast$ limit of  real polynomials with only real 
critical points. 
Recall  that a sequence $\{ f_n\} \subset L^\infty$ converges 
weak-$\ast$ to $f \in L^\infty$ if for every $g \in L^1$, 
$ \int_I f_n g dx \to \int_I f g dx$.
The definitions and results  we quote below can be found 
in standard texts such as \cite{MR1681462}.

\begin{proof}[Proof of Theorem \ref{weak limit}]
It suffices to prove this for functions on $[-1,1]$.
	Fix a real-valued  $f \in L^\infty([-1,1])$.
We want to find polynomials $\{P_n\}$
	that are uniformly bounded on $ [-1,1]$ and so that 
	$\int gP_n \to \int gf $ for any $g \in L^1([-1,1])$. 
Let $\{G^n_k\}$ be the 
	partition of $[-1,1]$ into unions of four adjacent  Chebyshev nodal 
intervals, as in the proof of Theorem \ref{Lip approx}. Using 
that theorem, there is a  $K \in \naturals$ and a real-valued polynomial $P_n$ so that 
$\lVert P_n\rVert_\infty \leq C \lVert f\rVert_\infty$ and 
\begin{align} \label{equality} 
\int_{G^n_k} P_n = \int_{G^n_k} f
\end{align} 
 for every $k=K, \dots, (n/4)-K$.
The union of the $2K$ intervals $G^n_k$  where this estimate does not 
hold have total length tending to zero as $n$ increases to $\infty$. 
Fix  $g \in L^1([-1,1])$ and note that 
both $\int gf$ and $\int gP_n$ tend to zero  over these intervals,
so we can restrict attention to the union $I \subset [-1,1]$
of sub-intervals where  (\ref{equality})  does hold.

For $M < \infty$ define  $g_M$ by 
$g_M =g$ on $\{x\in I: |g(x)|<M\}$ and  $g_M =0$ elsewhere. Since 
$|g-g_M| \leq g \in L^1$ and $g_M \to g$ pointwise almost everywhere, 
the Lebesgue dominated convergence theorem implies $\lVert g-g_M\rVert_1 \to 0$
as $M \nearrow \infty$. So given 
any $\epsilon>0$,  we can choose $M$ so large that $\lVert g-g_M\rVert_1 \leq \epsilon$.
Since $\lVert P_m\rVert_\infty \leq C \lVert f\rVert_\infty$, we have 
\[ \int_I P_n g - \int_I f g  = \int_I (P_n-f) g_M + \int_I (P_n-f) (g-g_M) 
= \int_I (P_n-f) g_M +  O( \epsilon\lVert f\rVert_\infty).\]
Therefore, it is enough to show  $\int_I (P_n-f)g  \to 0$
as   $n \nearrow \infty$.

Let $\psi_n$ be  a step function approximation to $g_M$ that 
is constant on the segments $\{ G^n_k\}$ and so that 
$\lVert g_M -\psi_n\rVert_1 \to 0$ as $n \nearrow \infty$.  
Note that   $\int_I (f-P_n)\psi_n=0$, since 
the integral of $f-P_n$ is zero on each sub-interval $G^n_k$ 
where $\psi_n $ is constant.  Therefore, 
\begin{align*} 
	|\int_I (P_n -f) g_M | 
	&=
	|\int_I  P_n (g_M -\psi_n) + \int_I (P_n -f)\psi_n + \int_I (\psi_n-g_M)f|  \\
	&\leq 
	\lVert P_n \rVert_\infty \lVert g_ M-\psi_n\rVert_1 +  \lVert f\rVert_\infty \lVert\psi_n-g_M\rVert_1  \\
	&\leq (C+1)\lVert f\rVert_\infty \lVert g_M - \psi_n\rVert_1,
\end{align*} 
and the last term tends to zero as $n$ increases. 
Thus $\int_I P_n g \to \int_I fg$ 
for any $g \in L^1(I)$,  and hence $P_n \to f$ weak-$\ast$.

The final step is to show that we cannot take  $C=1$ in 
Theorem \ref{weak limit}. Suppose  we could. 
Define  $f =1$ on $[0,1]$ and  $f=0$  on 
$[-1,0)$, and suppose  that $p_n$  are polynomials 
with only reals zeros, that  $\lVert p_n\rVert_\infty \leq 1$, and  that 
$p_n$ converges  weak-$\ast$ to $f$.
By weak convergence, $\int_0^1 p_n dx \to 1= \int_0^1 f dx$,  but 
for any $\epsilon >0$, 
\begin{align*} 
	\int_0^1 p_n dx 
	&\leq |\{x\in [0,1]: p_n(x) > 1-\epsilon\}| 
	+ (1- \epsilon) |\{x\in [0,1]: p_n(x)  \leq  1-\epsilon\}| \\
	&= 1- \epsilon |\{x\in [0,1]: p_n(x)  \leq  1-\epsilon\}|.
\end{align*} 
For $\epsilon>0$ fixed, the
only way the right-hand side can tend to $1$ is if  
\[|\{x\in [0,1]: p(x)  \leq  1-\epsilon\}| 
 = |\{x\in [0,1]: |p(x) -f(x)| \geq \epsilon\}| \to 0 .\]
Thus $p_n$ converges to $f$ in measure on $[0,1]$.
By the Clunie-Kuijlaars theorem discussed in 
the introduction (Corollary 1.3, \cite{MR1294323}), 
$f$ must be an entire function.  But $f$ is discontinuous, 
a contradiction.
Therefore,  $C=1$ is impossible. 
\end{proof} 

\begin{ques}
What is the optimal value of $C$ in Theorem  \ref{weak limit}? 
\end{ques} 

\section{Divergence almost everywhere}  \label{diverges ae}

In this section we prove the claim from the 
introduction that the polynomial 
approximants we construct in the proof of Theorem 
\ref{weier++}  have derivatives that 
diverge pointwise almost everywhere. 

Recall from Section \ref{extreme sec} that 
the derivatives of each of  our approximants are of 
the form $T_n(x,y)$, i.e., 
a perturbed version of the  Chebyshev polynomial $T_n$.
We want to show that $\frac {d}{dx} T_n(x,y)$ 
is large whenever $T_n(x,y)$ is small, so that the set
were $T_n(x,y)$ is close to zero has small measure.

\begin{lemma} \label{small set}
Suppose $T_n(x,y)$ is a perturbed Chebyshev function as in Lemma 
\ref{sup bound}.  If $\lVert y\rVert < t$ and $t$ is small enough, then 
$|\{x \in [-1,1]: |T_n(x,y)| < t \}| =O(t)$. 
\end{lemma}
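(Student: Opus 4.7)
The plan is to exploit the factorization $T_n(x,y)=T_n(x)R(x)$ used throughout Sections \ref{3-pt sec}--\ref{exterior sec}, where $R=\prod_{m=1}^{N}R_m$ is the product of the $3$-point distortion functions. Since $|T_n(x,y)|=|T_n(x)|\,|R(x)|$, the condition $|T_n(x,y)|<t$ together with $|R(x)|\geq 1/2$ forces $|T_n(x)|<2t$; hence
\[
\{x:|T_n(x,y)|<t\}\;\subset\;\{x:|T_n(x)|<2t\}\;\cup\;\{x:|R(x)|<1/2\},
\]
and it suffices to bound each of these two sets by $O(t)$.

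The first set is handled directly with the substitution $x=\cos\theta$: since $T_n(\cos\theta)=\cos(n\theta)$, the $\theta$-preimage is a union of at most $n$ subintervals of $[0,\pi]$ each of length $\leq(2/n)\arcsin(2t)\leq 4t/n$, giving total $\theta$-measure $\leq 4t$. The $x$-measure is no larger since $|dx|=\sin\theta\, d\theta\leq d\theta$.

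For the second set, I would argue that on each $G^n_m$ the reasoning in the proof of Lemma \ref{external} gives $\prod_{k\neq m}|R_k(x)|=1+O(t)$, so $|R(x)|<1/2$ on $G^n_m$ forces $|R_m(x)|\leq 1$ once $t$ is sufficiently small. The rational function $R_m$ has exactly three simple zeros, located at the perturbed roots $z_j\in G^n_m$, and its explicit cubic-over-cubic form from Lemma \ref{R = 1+C/Q} yields, via a direct computation in the rescaled variable $\xi=\phi^n_m(x)$ (using that $z_j-r_j$ has size $\simeq|y_m|\,|I^n_j|$ while the other relevant differences entering the derivative formula are $\simeq|I^n_j|$),
\[
|R_m'(z_j)|\;\simeq\;\frac{1}{|y_m|\,|I^n_j|},
\]
where $I^n_j$ is the nodal interval containing $z_j$. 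A Taylor expansion, with second-derivative control coming from the same explicit form, then gives that $|R_m(x)|\leq 1$ implies $|x-z_j|\leq C|y_m|\,|I^n_j|\leq Ct\,|I^n_j|$ for one of the three perturbed roots $z_j\in G^n_m$. Summing across all $m$ and all three perturbed roots in each $G^n_m$, and using $\sum_k|I^n_k|\leq 2$,
\[
|\{x:|R(x)|<1/2\}|\;\leq\;C' t\sum_k|I^n_k|\;=\;O(t).
\]

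The principal obstacle is justifying the derivative bound $|R_m'(z_j)|\simeq 1/(|y_m|\,|I^n_j|)$ and the Taylor-remainder estimate with constants independent of $n,m,j$ and of how small $|y_m|\in[0,t]$ may be; the delicate case is $|y_m|\to 0$, where the zero $z_j$ of $R_m$ collapses into the pole $r_j$ and one must check that the finite ratio governing $R_m'(z_j)$ survives the collision. This follows from the explicit formulas for $P$ and $Q$ in Lemma \ref{R = 1+C/Q} together with the uniform comparability of adjacent nodal lengths in $G^n_m$ supplied by Corollary \ref{K chains}, but the bookkeeping must be carried out carefully.
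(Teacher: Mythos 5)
Your factorization $T_n(x,y)=T_n(x)\,R(x)$, splitting the sublevel set into $\{|T_n|<2t\}\cup\{|R|<1/2\}$, is a genuinely different route from the paper's. The paper never separates the perturbation from the underlying Chebyshev polynomial: it factors the perturbed polynomial itself as $T_n(x,y)=(x-r_k(y))\,S_k(x)$ around each root, shows $|S_k|\gtrsim 1/|J_k|$ at the two adjacent extreme points of $|T_n(x,y)|$ using Lemma \ref{sup bound}, and then uses the concavity of $\log|S_k|$ on the interval between those extrema to propagate the lower bound throughout, localizing the small-$|T_n(\cdot,y)|$ set to an $O(t|J_k|)$ neighborhood of each root. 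Your first half (the $\cos\theta$ substitution to bound $\{|T_n(x)|<2t\}$) is sound in spirit, though the inequality $\arcsin(2t)\leq 2t$ that you appeal to is reversed; you want $\arcsin(2t)\leq \pi t$, which still gives $O(t)$.

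The second half contains a genuine error. From $|R|<1/2$ and $\prod_{k\ne m}|R_k|=1+O(t)$ you may conclude $|R_m|<\tfrac12+O(t)$, but you then weaken this to $|R_m|\leq 1$ before invoking the Taylor expansion, and the claim ``$|R_m(x)|\leq 1$ implies $|x-z_j|\leq Ct|I^n_j|$'' is false. Writing $R_m=1+C/Q$ with $Q(\xi)=(\xi+a)\xi(\xi-1)$ in the rescaled variable and $|C|=O(t)$, the condition $|1+C/Q|\leq 1$ is $C/Q\in[-2,0]$, which holds on essentially the entire half of (rescaled) $G^n_m$ where $Q$ has sign opposite to $C$, apart from tiny deletions near the zeros of $Q$. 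That set has measure comparable to $|G^n_m|$, not $O(t|G^n_m|)$: the local Taylor expansion at $z_j$ is correct, but it does not show that $\{|R_m|\leq 1\}$ is confined to those neighborhoods.

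The repair is short and avoids the derivative computation you were worried about. Keep the sharper bound $|R_m|<3/4$ (valid for $t$ small from $|R|<1/2$), and read directly from $R_m=1+C/Q$ that $|R_m|<3/4$ forces $\tfrac47|C|<|Q|<4|C|$, hence $|Q|=O(t)$ in the rescaled variable. Since $Q'(-a)=a(a+1)$, $Q'(0)=-a$, $Q'(1)=1+a$ are all bounded away from zero uniformly for $a$ near $1$, and $|Q|$ is bounded below at fixed distance from its zeros, the set $\{|Q|<4|C|\}$ is a union of three intervals of length $O(|C|)=O(t)$ around the \emph{poles} $r_j$ of $R_m$; transferring back gives $|\{x\in G^n_m:|R_m(x)|<3/4\}|=O(t|G^n_m|)$, and summing over $m$ gives $O(t)$. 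Notice this also dissolves the delicate case $|y_m|\to 0$ you flagged: the localization is around the fixed points $r_j$, not the moving zeros $z_j$, and as $y_m\to 0$ we have $|C|\to 0$ and the exceptional set shrinks to nothing.
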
 

\begin{proof}
Suppose $r^n_k(y)$ is the $k$th  root of the perturbed polynomial  $T_n(x,y)$.
Recall that this is a perturbation of $r^n_k$, the $k$th root of $T_n(x)$ 
(possibly $r^n_k$ is equal to $r^n_k(y)$).  The point $r^n_k$ is the left 
endpoint of the $k$th nodal interval  $I^n_k$ of $T_n(x)$.

Write $T_n(x,y) = (x-r^n_k) S^n_k(x)$, i.e., 
$S^n_k$ is a constant $A$  times the product of terms $(x-r^n_j)$  over all the 
roots other than $r^n_k$. 
Let $x_k \in I^n_k$ and $x_{k+1} \in I^n_{k+1}$ 
be points where $|T_n(x,y)|$ is maximized in $  I^n_k$ and $I^n_{k+1}$ respectively. 
Then 
\begin{align} \label{f_k bound 1} 
	\quad 
|S^n_k(x_{k})| = \frac {|T_n(x_k, y)|}{|x_{k}-r_k|}  \geq \frac {1/2}{|I_k|}
\end{align} 
and 
\begin{align} \label{f_k bound 2} 
|S^n_k(x_{k+1})|=\frac {|T_n(x_{k+1}, y)|}{|x_{k+1}-r_{k}|} 
\geq \frac {1/2}{|I_{k+1}|},
\end{align}
since by Lemma \ref{sup bound} the extreme values of $|T|$ are
bigger than $1/2$ if $t$ is small enough.
In both cases, the lower bound is bigger than $1/(2|J_k|)$ where 
 $J_k = I^n_k \cup I^n_{k+1} $. 
By Lemma \ref{length growth},  adjacent nodal intervals have comparable length, 
so $ |J_k| \simeq |I^n_k|$. 

Since 
\[ \log |S^n_k(x)| = \log |A| +\sum_{j:j \ne k} \log |x-r_j|,\]
and each individual term is concave down on $ [x_k, x_{k+1}]$, we see 
that $\log |S^n_k|$ is concave down here as well. Hence $\log |S^n_k|$, and 
therefore $|S^n_k|$,  attains its minimum over $[x_k, x_{k+1}]$
at  one of the endpoints, i.e.,  at either $x_k$ or $x_{k+1}$.
By Equations (\ref{f_k bound 1}), (\ref{f_k bound 2}),
and the remarks  following them, 
we deduce that $|S^n_k(x)| \geq C/ |J_k|$ for all $  x \in [x_k , x_{k+1}]$.
Thus 
\begin{align*}
\{ x \in J_k : |T_n(x,y)| <  \epsilon \} 
	&=
\{ x \in J_k: |x-r_k|\cdot |S^n_k(x)|  < \epsilon \}  \\
	&\subset
\{ x \in J_k:  \frac{C |x-r_k| }{|J_k|}   < \epsilon \} ,
\end{align*}
and hence 
\begin{align*}
|\{ x \in J_k: |T_n(x,y)|  < \epsilon \}| 
\leq 
|\{ x \in J_k: |x-r_k| <   \epsilon |J_k|/C \}|
 = 2 \epsilon |J_k|/C.
\end{align*}
Note that the intervals $J_k$  cover each $I^n_k$ twice,
so $\sum |J_k| \leq 2 \sum_k |I^n_k| \leq  4$.
Therefore, 
\begin{align*}
|\{ x \in [-1,1]: |T_n(x,y)|  < \epsilon \}| 
\leq 
\frac {2 \epsilon}{C} \sum_k |J_k| \leq   \frac{8 \epsilon }{C} = O(\epsilon). \qedhere
\end{align*}
\end{proof} 

\begin{cor} \label{diverges}
If $f $ in Theorem \ref{weier++} is Lipschitz, the sequence $\{p_n\}$ can 
be chosen so that $\{p_n'\}$ is uniformly bounded and so that  any 
sub-sequence diverges pointwise  Lebesgue almost everywhere on $[-1,1]$.
\end{cor}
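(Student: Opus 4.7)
The corollary has two assertions: uniform boundedness of $\{p_n'\}$, and divergence almost everywhere of any subsequence. The first is immediate from the construction: since $p_n'=T_n(\,\cdot\,,y^n)$ for some $y^n\in Q_t^N$ with $t$ small, Lemma \ref{sup bound} yields $\lVert p_n'\rVert_\infty\leq\lambda$ for a fixed constant $\lambda$ independent of $n$.

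For divergence almost everywhere I would argue by contradiction using the Clunie--Kuijlaars theorem recalled in the introduction. Suppose some subsequence $\{p_{n_k}'\}$ converges pointwise on a measurable set $E\subset[-1,1]$ with $|E|>0$. Since $E$ has finite measure, Egoroff's theorem promotes this to convergence in measure on $E$. Each $p_n'$ has only real zeros---precisely the defining property from Theorem \ref{weier++}---so the Clunie--Kuijlaars theorem (Corollary 1.3 of \cite{MR1294323}) implies that $\{p_{n_k}'\}$ converges uniformly on compact subsets of $\complex$ to some Laguerre--P\'olya entire function $H$.

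The contradiction comes from evaluating at $z=2$. The polynomial $p_n'=T_n(\,\cdot\,,y^n)$ retains the Chebyshev leading coefficient $2^{n-1}$, and since the perturbation only moves zeros within $[-1,1]$ (provided $t$ is small enough, as arranged in the proof of Corollary \ref{Lip approx}), each perturbed zero $z_j^n$ satisfies $|2-z_j^n|\geq 1$. Hence
\[
|p_n'(2)| \;=\; 2^{n-1}\prod_j |2-z_j^n| \;\geq\; 2^{n-1}\;\longrightarrow\;\infty,
\]
so $p_{n_k}'(2)$ cannot converge to the finite value $H(2)$. This contradiction shows that no subsequence of $\{p_n'\}$ can have a pointwise convergence set of positive measure, which is exactly the desired assertion that every subsequence diverges pointwise Lebesgue a.e.\ on $[-1,1]$.

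The main subtle step is invoking Clunie--Kuijlaars correctly: one must check that pointwise a.e.\ convergence on a set of finite positive measure upgrades to convergence in measure (routine via Egoroff) and that each $p_n'$ has only real zeros (the very content of Theorem \ref{weier++}). Lemma \ref{small set} provides a complementary quantitative picture---namely, that $|\{|p_n'|<\eta\}|=O(\eta)$ uniformly in $n$, so by Fatou any pointwise subsequential limit must be bounded away from zero a.e.\ on its domain of convergence. By itself this quantitative bound does not force divergence for every Lipschitz $f$; the decisive ingredient is the exponential blow-up of $p_n'(z)$ away from $[-1,1]$, which the real-zero, bounded-sup-norm structure makes inescapable.
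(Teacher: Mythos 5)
Your argument takes a genuinely different route from the paper, which never invokes the Clunie--Kuijlaars theorem here. The paper's proof is entirely elementary: the proof of Lemma~\ref{small set} shows that \emph{each} of the sets $P_n=\{T_n(\cdot,y_n)>1/2\}$ and $N_n=\{T_n(\cdot,y_n)<-1/2\}$ has Lebesgue density bounded away from zero in every fixed subinterval of $[-1,1]$ once $n$ is large. The Lebesgue differentiation theorem then supplies an interval $J$ in which any candidate set $E$ of positive measure has density close to $1$, so $E$ must meet both $P_n$ and $N_n$ in positive measure for all large $n$; no point of $E$ can be a point of convergence to a nonnegative (resp.\ nonpositive) limit. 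That is the whole argument.

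Your route is to reduce to Clunie--Kuijlaars and exploit the leading-coefficient blow-up $|p_n'(2)|\geq 2^{n-1}$; this works, but as written it has a gap. The Clunie--Kuijlaars conclusion (local uniform convergence on $\complex$ to a Laguerre--P\'olya function) cannot hold if the in-measure limit on $E$ is identically zero: take $p_n(x)=x^n$, which has only real zeros, converges in measure to $0$ on $(-\tfrac12,\tfrac12)$, yet $p_n(2)=2^n\to\infty$. The Laguerre--P\'olya class in \eqref{Hadamard} excludes the zero function, so the theorem tacitly requires the limit to be nonzero on a positive-measure set. Your Fatou/Lemma~\ref{small set} observation is exactly what excludes the zero case, but you present it as a ``complementary'' aside rather than as a load-bearing step of the contradiction; that needs to be folded into the main argument. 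Also, the sentence asserting that ``by itself this quantitative bound does not force divergence'' is incorrect: the paper's proof demonstrates precisely that Lemma~\ref{small set}, coupled only with Lebesgue differentiation, already forces divergence almost everywhere, with no need for Clunie--Kuijlaars or for any estimate off the interval $[-1,1]$.
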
 

\begin{proof} 
As in the proof of Theorem \ref{weier++}, 
it suffices to consider $f$ to be $t$-Lipschitz for 
some fixed $t>0$, and we showed in  
Section \ref{fixed pt sec} that such  a function
can be approximated by polynomials whose derivatives are
functions of the form $T_n(x,y_n)$ with $|y_n| \leq t$.
From the proof of Lemma \ref{small set}, we can deduce that 
the  sets 
	\[ N_n =\{ x \in [-1,1]: T_n(x,y_n)  <- 1/2 \},\]
	\[P_n=\{ x \in [-1,1]: T_n(x,y_n)  > 1/2 \}\]
each have density bounded uniformly away from zero 
in any interval $[a,b] \subset [-1,1]$ if $n$ is 
large enough, i.e., 
\[ \liminf_{n \to \infty} \frac {|P_n \cap [a,b]|}{b-a}   \geq c > 0.\]
The same inequality holds for $N_n$. 
The Lebesgue differentiation theorem (e.g., Theorem 3.21 in 
\cite{MR1681462}) then implies that for  every 
subset $E \subset [-1,1]$ of positive measure, there is an 
interval $J \subset [-1,1]$ so that $|E\cap J| > (1-c/2)|J|$, 
e.g., take a small enough interval around a point of 
density of $E$. Therefore $|E\cap P_n\cap J|$ and 
$|E\cap N_n \cap J|$ both have measure 
larger than $(c/2)|J| >0$ for all large enough $n$ 
(depending on $E$). However, if $E^+$ 
is the set  of $x$'s where $\{T_{n_k}(x, y_{n_k})\}$  
converges to a non-negative limit for some sequence $\{n_k\}$, 
then $E^+$ is disjoint from $N_n$ for all sufficiently large $n$, 
hence $E^+$ must have Lebesgue measure zero. 
Similarly,  if  $E^-$ is the set where  the sequence 
converges to a non-positive limit, then $E^-$  is disjoint from 
$P_n$ for all large enough $n$, and hence $E^-$ also has zero measure. 
Thus the sequence of derivatives  diverges almost everywhere. 
\end{proof} 

The Clunie-Kuijlaars theorem discussed in the introduction implies that 
if the sequence  $\{p_n'\}$ does not converge uniformly on
$[-1,1]$ to a Laguerre-P{\'o}lya
function, then at almost every point of $[-1,1]$ it either  
diverges, converges to $0$, or $|p_n'|$ converges to $\infty$. 
Corollary \ref{diverges}  shows the first option can occur, 
and \cite{weier_part2} shows  
that approximants can  be chosen so that $p_n'(x)$ converges 
to zero almost everywhere, or so that it tends to either $+\infty $ 
or $-\infty$ almost everywhere. 

\section{Theorem \ref{weier++} fails for some subsets of $\reals$} \label{fails sec}

For $X \subset \reals$, let
${{CP}}(X)$ denote all uniform limits of real polynomials
on $X$ with all critical points contained in $X$.
Theorem \ref{weier++} says that $CP(X) = C_\reals(X)$ when 
$X$ is an interval, but we will show that 
this fails for some disconnected  subsets  of $\reals$.
If $X$ has only one or two points, the situation is trivial  
(non-constant linear functions have no critical points and can 
approximate every function on $X$),
but even for three points the answer is not immediately obvious.
If $ X=\{ -1,0,1\}$, then any polynomial $p$ with
critical points in $X$ has a derivative of the form
\[ C \int (x+1)^ax^b(x-1)^c dx,\]
for some triple $(a,b,c)$ of non-negative integers.  Thus
\[ \frac{p(1)-p(0) }{p(0)-p(-1)} =
\frac { \int_0^1(x+1)^ax^b(x-1)^c dx}
{ \int_{-1}^0(x+1)^ax^b(x-1)^c dx},\]
and this  takes only countably many different values.
Thus $p$ restricted to $X$  cannot equal every possible real-valued
function on these three points.
However, with some work, one can show that the set of possible ratios
is dense in $\reals$, and this implies $CP(X)  = C_\reals(X)$.
Very briefly, consider  $ p(x)= (1-x)^a(1+x)^{n-a}$ with $n$ large
and $0\leq a\leq n$.
Normalize $p$  so the
maximum of $p$  in $[-1, 1]$ is $1$. This function has  a single bump
with  max that slides from $-1$ to $1$  as $a$ goes from $0$ to $n$.
The width  of the bump is about $1/\sqrt{n}$, but the distance
between the peaks for consecutive $a$'s
is about $1/n$. Thus only about $1/\sqrt{n}$ of the mass moves across
$0$ as we increment $a $ near $n/2$, and careful estimates show that
we can approximate any positive ratio that we want.
However, for four points, the situation is different.

\begin{figure}[htb]
\centerline{
\includegraphics[height=2.0in]{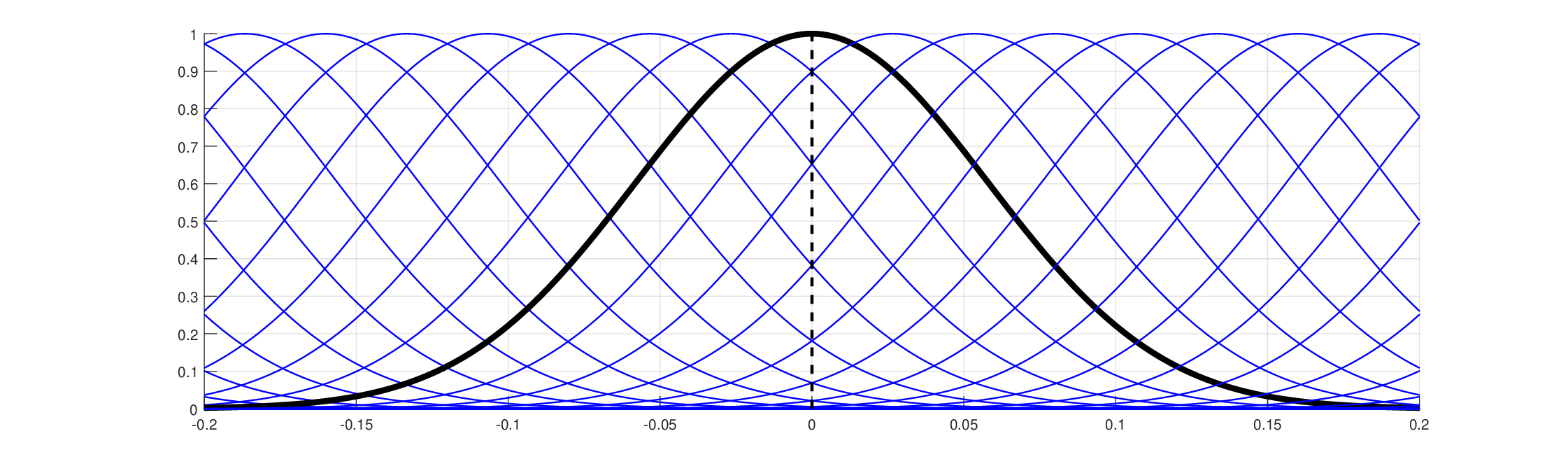}
}
\caption{ \label{three_pts-slide}
Plots of $ C \cdot \int_0^t (x+1)^a(x-1)^{n-a} dx$, where
$n=300$, $a=2,4,\dots, 298$ and $C=C(a,n)$ is chosen so the 
function's maximum is $1$.
The thicker graph corresponds to $a = n/2$.
By taking $n$ large and ``sliding'' $a$ from $1$ to $n$, the
ratio of the areas over $[-1,0]$ and $[0,1]$ can approximate any positive value
we wish.
}
\end{figure}

\begin{lemma}\label{four points}
Suppose that  $0<\epsilon <1/4$.
Suppose $X$ is a compact set
contained in $[ -1-\epsilon, -1] \cup[ 1, 1+\epsilon]$, and  that
it contains at least two distinct points in each of these intervals.
Then there is a real-valued function continuous  on $X$ that cannot be
uniformly approximated by polynomials  that have
all their critical points  inside $X$.
\end{lemma}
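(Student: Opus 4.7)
The plan is to construct a specific continuous function $f : X \to \reals$ that cannot be uniformly approximated by polynomials with critical points in $X$, by extracting a quantitative ``almost monotonicity'' obstruction. Using the hypothesis that $X \cap [-1-\epsilon, -1]$ contains at least two points, I pick two that are consecutive in $X$, $a_1 < a_2$ (no point of $X$ strictly between); similarly pick consecutive $b_1 < b_2$ in $X \cap [1, 1+\epsilon]$. Define $f$ to be any continuous extension to $X$ of the four-point assignment $f(a_1) = f(b_2) = 0$, $f(a_2) = f(b_1) = 1$, which exists by Tietze's theorem. I claim $f \notin \CP(X)$.

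For any polynomial $p$ with all critical points in $X$, set $J_1 = (a_1, a_2)$, $J_2 = (a_2, b_1) \supset (-1, 1)$, $J_3 = (b_1, b_2)$, and $D_i = \int_{J_i} p'$. The consecutive-pair choice, together with $X \subset [-1-\epsilon, -1] \cup [1, 1+\epsilon]$, guarantees that $p'$ has no zeros in the interior of any $J_i$: in $J_2$ because $X$ avoids $(-1,1)$, and in $J_1, J_3$ because no point of $X$ lies strictly between our consecutive choices. Hence $p'$ has constant sign on each $J_i$, $|D_i| = \int_{J_i} |p'|$, and $\log|p'|$ is concave on each. The central ingredient is the inequality
\[ |D_1|\cdot|D_3| \leq C \epsilon^2 \, |D_2|^2, \]
for a constant $C$ independent of $p$. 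Granting this, if $p$ approximates $f$ uniformly to within $\delta$, then $|D_1|, |D_3| \geq 1 - 2\delta$ while $|D_2| \leq 2\delta$, forcing $(1-2\delta)^2 \leq 4C\epsilon^2\delta^2$. This is violated for all $\delta$ below a fixed positive threshold determined by $\epsilon$, contradicting uniform approximability.

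To establish the key inequality I would exploit log-concavity of $g = |p'|$ on $(a_1, b_2)$, or piecewise between the zeros of $p'$ if $a_2$ or $b_1$ happens to be a root. In the cleanest subcase, $g$ is log-concave on all of $(a_1, b_2)$ with its unique peak in $J_2$: then $g$ is monotone on each of $J_1, J_3$, giving $\int_{J_i} g \leq g(\text{inner endpoint}) |J_i|$, while a log-mean lower bound $\int_{J_2} g \geq |J_2| (g(a_2) - g(b_1))/\log(g(a_2)/g(b_1))$ combined with the elementary identity $\max_{r>0} r(\log r)^2/(r-1)^2 = 1$ yields $|D_1||D_3| \leq |J_1||J_3|\, |D_2|^2/|J_2|^2 \leq \epsilon^2|D_2|^2/4$. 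When the peak $\xi$ lies in $J_1$ (or symmetrically $J_3$), the extrapolation $g(\xi) \leq g(a_2)\bigl(g(a_2)/g(b_1)\bigr)^{|J_1|/|J_2|}$ (from log-concavity on $J_2$) balances the peak value against the rapid decay of $g$ across $J_2$, and a short calculation retrieves the same bound up to a modest constant. If $p'$ vanishes at $a_2$ or $b_1$, factor $p'(x) = (x-a_2)^{n_2}(x-b_1)^{m_1} h(x)$ with $|h|$ log-concave on $(a_1, b_2)$, and apply the above to $|h|$; the polynomial weights contribute only favorably to the ratio.

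The main obstacle is the peak-in-$J_1$ case: the naive bound $\int_{J_1} g \leq g(\xi) |J_1|$ wastes the fact that $g(\xi)$ may vastly exceed $g(a_2)$. The resolution is that the same log-concavity on $J_2$ which allows the peak to rise also forces $g(b_1)$ to be very small, and one must combine the extrapolation with the log-mean bound into a single one-variable inequality of the form $\max_r \alpha\, r(r^\alpha - 1)\log r/(r-1)^2 = O(\alpha^2)$ for $\alpha = |J_1|/|J_2| \leq \epsilon/2$. This identity is elementary but delicate, and it is where the $\epsilon^2$ factor in the final bound is extracted.
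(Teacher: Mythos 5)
Your overall strategy --- pick four reference points, define $f$ to jump between $0$ and $1$, and show that the product $|D_1|\,|D_3|$ is forced to be small relative to $|D_2|^2$ --- has the same shape as the paper's argument (the paper also reduces $\min(x,y)\le\lambda\max(w,z)$ to a product inequality $xy\le\lambda^2 wz$). But the route you take to the product inequality has genuine gaps.

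First, the ``consecutive points'' choice need not exist. The hypothesis only says $X$ is compact with at least two points in each of $[-1-\epsilon,-1]$ and $[1,1+\epsilon]$; $X$ could contain an entire sub\-interval, in which case there is no pair $a_1<a_2$ in $X$ with no point of $X$ strictly between. The paper sidesteps this by taking the four points to be $s=\inf X_{-1}$, $t=\sup X_{-1}$, $u=\inf X_1$, $v=\sup X_1$ and \emph{not} assuming $p'$ is sign-definite on $(s,t)$ or $(u,v)$: it only uses $|p(s)-p(t)|\le\int_s^t|p'|\le (t-s)\sup_{X_{-1}}|p'|$, which is valid regardless of sign changes.

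Second, even when consecutive pairs exist, your claim that $p'$ has no zeros in the interior of $J_2=(a_2,b_1)$ is wrong unless you also force $a_2=\sup X_{-1}$ and $b_1=\inf X_1$. If $a_2<\sup X_{-1}$, then $(a_2,-1]\subset J_2$ may contain points of $X$, hence zeros of $p'$, and log-concavity of $|p'|$ on $J_2$ fails. Fixing this pushes you back toward the extreme-point choice, at which point the consecutivity on the outer intervals is lost, and you are back to the first problem.

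Third, the two steps you flag as delicate are in fact the heart of the matter and are not carried out: the one-variable optimization that is supposed to produce the $\epsilon^2$ factor, and the assertion that factoring out $(x-a_2)^{n_2}(x-b_1)^{m_1}$ ``contributes only favorably to the ratio.'' The latter is not obviously true and needs a proof; in the extreme case $n_1=m_2=0$ (all critical points at $a_2$ and $b_1$), $h$ is constant and the entire estimate must come from those weights, so the claim cannot be dismissed as a lower-order correction.

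The paper's proof avoids all of this with a much blunter tool: normalize $p'$ to be monic, let $n,m$ be the number of zeros (with multiplicity) in $X_{-1},X_1$, and bound $|p'|$ pointwise from below on $[-\epsilon,\epsilon]$ by $(1-\epsilon)^{\min(n,m)}$ and from above on $X_{\pm 1}$ by $\epsilon^{\#\text{near}}(2+2\epsilon)^{\#\text{far}}$, purely from the distances of the evaluation point to the zeros. Multiplying the two side bounds then gives $|D_1|\,|D_3|\le \bigl(\epsilon(2+2\epsilon)/(1-\epsilon)\bigr)^{n+m}|D_2|^2$, and the base is $<1$ for $\epsilon<(-3+\sqrt{17})/4$, which contains $(0,1/4)$. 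No log-concavity, no interior-zero issues, and the proof works for arbitrary compact $X$ satisfying the hypothesis. If you want to salvage the log-concavity idea, you would at minimum need to (i) replace ``consecutive'' by the extreme points, (ii) handle the zeros of $p'$ inside the outer intervals by decomposing into sign-definite pieces (which destroys the single log-concave profile), and (iii) actually prove the one-variable inequality; the paper's direct pointwise bound is considerably cleaner.
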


\begin{proof}
Let $X_{-1} = X \cap [-1-\epsilon, 1]$ and
$X_1 = X \cap [1, 1+\epsilon]$. Let $s = \inf X_{-1}$,
$t = \sup X_{-1}$, $u = \inf X_1$, and $v= \sup X_1$.
We claim there is a $\lambda <1$
depending only on $\epsilon$ so that 
\begin{align} \label{either or} 
	\min(|p(s)-p(t)|, |p(u)-p(v)|)  \leq \lambda |p(t)-p(u)|
\end{align} 
holds for every real polynomial with all critical points in $X$.
If such a $\lambda $ exists, then a  function $f$  on $X$ such that
\[  |f(s)-f(t)|  =|f(t)-f(u)| =|f(u)-f(v)| >0 \]
cannot be uniformly approximated by polynomials in $CP(X)$.

Take a $p$ as above and let $p' $ be its derivative. Rescaling
by a constant we may assume that $p'$ is monic
and that it has $n$ zeros in $X_{-1}$ and $m$ zeros in $X_1$.
Note that $p'$ has  a single sign in $[t,u]$; without loss of
generality we may assume $p'>0$   here.
For $x \in [0, \epsilon]$, the distance of $x$ to
$X_{-1}$ is $\geq 1$ and its distance to $X_1$
is  $\geq 1-\epsilon$. Thus  on
$[0, \epsilon]$ we have $|p'| \geq (1-\epsilon)^m$.
Similarly, $|p'| \geq (1-\epsilon)^{n}$ on $[-\epsilon,0]$.
Thus
\[|p(u) -p(t)| \geq \Big|\int_t^u p' \Big|
	\geq  \max \bigg(\Big|\int_{-\epsilon}^0 p' \Big|, \Big|\int_0^{\epsilon} p' \Big| \bigg)
\geq  \max\big(\epsilon (1-\epsilon)^{n}, \epsilon(1-\epsilon)^m\big).\]
Every point of $[1, 1+\epsilon]$ is within distance $\epsilon$ of
the $m$ roots contained in $X_1$ and is within distance $2 + 2 \epsilon$
 the $n$ roots in $X_{-1}$.  Therefore on
$X_1$ we have $|p'| \leq \epsilon^m (2+2 \epsilon)^n$.
Similarly, on $X_{-1}$ we have $|p'| \leq \epsilon^n (2+ 2 \epsilon)^m$.
Thus
\[|p(s) -p(t)| \leq |\int_s^t p' | \leq   \epsilon^{n+1}(2+2\epsilon)^m,\]
\[|p(u) -p(v)| \leq |\int_u^v p' | \leq   \epsilon^{m+1}(2+2\epsilon)^n.\]
The inequality $\min(x,y) \leq \lambda \max(w,z) $ follows from $xy \leq  \lambda^2 wz$,
so to prove (\ref{either or})  it suffices to verify that
\[
\big( \epsilon^{n+1}(2+2\epsilon)^m \big)
\big( \epsilon^{m+1}(2+2\epsilon)^n \big)
\leq \lambda^2 \epsilon^2 (1-\epsilon)^{n+m}
\]
\[
 \bigg(  \frac  {\epsilon(2+2\epsilon)}
          {1-\epsilon}\bigg)^{n+m}
 \leq \lambda^2 .
\]
This holds  for some $\lambda<1$ if
$
  {\epsilon(2+2\epsilon)} <    1-\epsilon.
  $
Using  the quadratic formula,  we get 
\[\frac{-3 -\sqrt{17}}{4}  < \epsilon <\frac{-3+ \sqrt{17}}{ 4} \approx .2808 .\]
The right side is larger than $1/4$,  so
this proves the lemma.
\end{proof}

\begin{ques}
For which compact sets $X \subset \reals$ is $CP(X) = C_\reals(X)$? 
Does this fail for all disconnected sets  $X$ with more than three points?
\end{ques}

\bibliographystyle{rminumeric}
\bibliography{weier}

\end{document}